\documentclass[11pt]{amsart}

\usepackage{amsmath,amsthm,amscd,amssymb,subcaption}
\usepackage[letterpaper,left=1.5in,right=1.5in,top=1.25in,bottom=1.25in]{geometry}
\setlength{\unitlength}{1in}
\usepackage{pb-diagram}
\usepackage{graphicx, xcolor}

\newtheorem{thm}{Theorem}[section]
\newtheorem{lem}[thm]{Lemma}
\newtheorem{prop}[thm]{Proposition}
\newtheorem{cor}[thm]{Corollary}

\theoremstyle{definition}
\newtheorem{example}[thm]{Example}
\newtheorem{defn}[thm]{Definition}
\newtheorem{rem}[thm]{Remark}

\numberwithin{equation}{section}
%\numberwithin{figure}{section}

\newcommand{\C}{\mathcal{C}}
\newcommand{\lk}{{\mathrm{\ell k}}}

\newcommand{\tb}{\text{tb}}
\newcommand{\TB}{\text{TB}}
\newcommand{\rot}{\text{rot}}
\newcommand{\s}{\text{s}\,}
\newcommand{\g}{\text{g}}
\newcommand{\gsh}{\text{g}_\text{sh}}
\newcommand{\Arf}{\text{Arf}}
\newcommand{\Wh}{\text{Wh}}
\newcommand{\K}{\mathcal{K}}

\title[Shake slice and shake concordant knots]{Shake slice and shake concordant knots}

\author{Tim D.\ Cochran$^{\dag}$}
\address{Department of Mathematics MS-136, P.O. Box 1892, Rice University, Houston, TX 77251-1892}
\email{cochran@rice.edu}
\author{Arunima Ray$^{\dag\dag}$}
\address{Department of Mathematics MS-050, Brandeis University, 415 South St., Waltham, MA 02453}
\email{aruray@brandeis.edu}

\thanks{$^{\dag}$Partially supported by the National Science Foundation  DMS-1309081, and the Simons Foundation}
\thanks{$^{\dag\dag}$Partially supported by NSF--DMS--1309081 and the Nettie S.\ Autrey Fellowship (Rice University)}

\subjclass[2000]{Primary 57M25}

\begin{document}
%\linenumbers

\date{\today}
\begin{abstract}  
A crucial step in the surgery-theoretic program to classify smooth manifolds is that of representing a middle--dimensional homology class by a smoothly embedded sphere. This step fails even for the simple $4$--manifolds obtained from the 4--ball by adding a 2--handle with framing $r$ along some knot $K\hookrightarrow \partial B^4$. An \textit{$r$--shake slice} knot is one for which a generator of the second homology of this $4$--manifold can be represented by a smoothly embedded $2$--sphere. It is not known whether there exist 0--shake slice knots that are not slice. We define a relative notion of shake sliceness of knots, which we call \textit{shake concordance} which is easily seen to be a generalization of classical concordance, and we give the first examples of knots that are 0--shake concordant but not concordant; these may be chosen to be topologically slice. Additionally, for each $r$ we completely characterize $r$--shake slice and $r$--shake concordant  knots in terms of concordance and satellite operators. Our characterization allows us to construct new families of possible $r$--shake slice knots that are not slice. 
\end{abstract}

\maketitle
%===========================================================

\section{Introduction}\label{sec:intro}

Given a homology class in a manifold, it is often of interest to find submanifolds representing that class. For example, a key step in the surgery-theoretic program to classify smooth manifolds is that of representing a middle--dimensional homology class by a smoothly embedded sphere. This step fails for 4--manifolds. The simplest examples of this failure arise as follows. Let $K$ be an oriented knot in $S^3$ and $r$ an integer; let $W_K^r$ be the 4--manifold obtained by adding a 2--handle to $B^4$ along $K$ with framing $r$. We say that $K$ is \textit{$r$--shake slice} -- a notion introduced in \cite{Ak77} -- if there exists a smoothly embedded 2--sphere in $W_K^r$ representing a generator of $H_2(W_K^r)\cong \mathbb{Z}$. Not every such homology class can be represented by an embedded sphere, that is, not all knots are $r$--shake slice since, for example, certain knot signatures  are obstructions~\cite{Ak77}.

There is also a relative version of this notion (mentioned but not defined in Kirby's problem list ~\cite[p.\ 515]{Kirbylist1984}). Given oriented knots $K_i\hookrightarrow S^3\times \{i\}$, $i=0,1$, let $W_{K_0,\,K_1}^r$ denote  the 4--manifold obtained by adding two 2--handles to $S^3\times [0,1]$ along the $K_i$, each with framing $r$. Then $K_0$ is \textit{$r$--shake concordant} to $K_1$ if there exists a 2--sphere, smoothly embedded in $W^r_{K_0,\,K_1}$, representing the $(1,\,1)$ class of $H_2(W^r_{K_0,\,K_1})$ (a precise definition is given in Section~\ref{sec:background}).  Surprisingly little work  (summarized below) has been done on these notions. Instead, research has focused on a special case, introduced by Fox and Milnor: the knot $K$ is \textit{slice} if there exists a  2--disk, smoothly embedded in $B^4$, whose boundary is $K$; and the knots $K_i\hookrightarrow S^3\times \{i\}$, $i=0,1$, are \textit{concordant} if there is an annulus, smoothly embedded in $S^3\times [0,1]$, which restricts on its boundary to the $K_i$ ~\cite{FoMi57,FoMi66}. The main goal of this present paper is to investigate the difference between the notions of shake concordance and concordance. In particular we give the first examples of 0--shake concordant knots that are not concordant, and we completely characterize $r$--shake slice and $r$--shake concordant knots in terms of concordance, for all integers $r$.

We briefly review what was previously known on this subject.  Clearly any slice knot is $r$--shake slice and any two concordant knots are $r$--shake concordant, for any $r$. In his seminal paper~\cite{Ak77}, Akbulut gave an example of a 1--shake slice knot and a 2--shake slice knot neither of which are slice (Lickorish gave a different construction later in \cite{Lic79}). Boyer gave many examples of what he called `pseudo-1--shake slice' knots (allowing himself to alter the smooth structure on $W_K^r$) in \cite{Boy83, Boy85}. More recently, in~\cite{Ak93}, Akbulut generalized the 1--shake slice examples in~\cite{Ak77}, and constructed $r$--shake slice knots for each \textit{non-zero} $r$, which are not always slice. Building on \cite{Ak93, Om11}, Abe--Jong--Omae--Takeuchi \cite{AbeJongOmaeTake13} have given examples, also for each non-zero $r$, of $r$--shake slice knots that are not slice. It is easy to see that any $r$--shake slice knot is $r$--shake concordant to the unknot, and so the above examples also give examples, for $r\neq 0$, of knots that are $r$--shake concordant but not concordant.  However, it is fair to say that there is no systematic understanding of $r$--shake concordance. Moreover, nothing is known about the important case $r=0$.  

Differentiating between concordance and 0--shake concordance, and between slice knots and 0--shake slice knots, is made difficult by the following elementary fact. Let $M^r_K$ denote the 3--manifold obtained by performing $r$--framed surgery on $S^3$ along the knot $K$. 

\newtheorem*{prop_1}{Proposition~\ref{prop:shakeconchomcob}}
\begin{prop_1} If $K$ is 0--shake concordant to $J$ then $M_K^0$ is homology cobordant to $M_J^0$ preserving the homology class of the (positive) meridian. Consequently, if $K$ is 0--shake slice then $K$ is slice (i.e.\ bounds a smoothly embedded disk) in some homology $B^4$. \end{prop_1}

Recall that the classical concordance invariants are determined by the homology cobordism type of $M_K^0$, along with the homology class of the positive meridian. Hence the question of whether there exist 0--shake slice knots that are not slice is especially difficult because there are presently no known invariants that can distinguish between a knot being slice in $B^4$ and it being slice in merely a homology $B^4$, except (possibly) Rasmussen's $\s$--invariant and its generalizations.  

Therefore it may be surprising that here we find success in the relative case.

\newtheorem*{thm:examples}{Theorem~\ref{thm:examples}}
\begin{thm:examples} For any integer $r$, there exist infinitely many knots which are distinct in smooth concordance but are pairwise $r$--shake concordant. For $r=0$, there exist topologically slice knots with this property as well. 

In addition, for any integer $r$, none of $\tau$, $\s$, or slice genus is invariant under $r$--shake concordance.
\end{thm:examples}

This result can be seen as a consequence of our complete characterization of $r$--shake concordance in terms of concordance and certain winding number one satellites. Let $P$ be a pattern knot, i.e.\ a knot inside a solid torus (an example is shown in Figure~\ref{fig:P}). For any knot $K$, let $P_r(K)$ denote the $r$--twisted satellite of $K$ with pattern $P$~\cite[p.\ 10]{Lick97}\cite[p.\ 110]{Ro90}. $P(K)$ denotes the 0--twisted (i.e.\ classical) satellite of $K$ with pattern $P$. Let $\widetilde{P}$ denote the knot in $S^3$ given by $P$ when the solid torus is placed in $S^3$ in the standard unknotted manner, or equivalently, $\widetilde{P}=P(U)$ where $U$ is the unknot. 

\begin{figure}[t]
\centering
\includegraphics[width=2.5in]{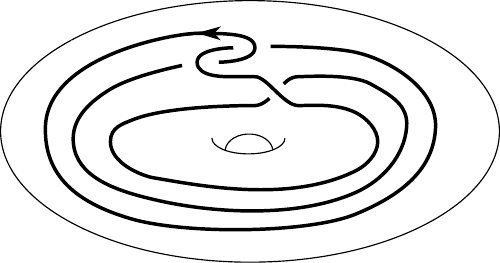}
\caption{A winding number one pattern $P$ where $\widetilde{P}$ is unknotted. This particular pattern will be referred to as the Mazur pattern, inducing the Mazur satellite operator.}\label{fig:P}
\end{figure}

\newtheorem*{thm:main}{Theorem~\ref{thm:main}}\begin{thm:main} For any integer $r$, the knots $K$ and $J$ are $r$--shake concordant if and only if there exist winding number one patterns $P$ and $Q$, with $\widetilde{P}$ and $\widetilde{Q}$ ribbon knots, such that $P_r(K)$ is concordant to $Q_r(J)$. 
\end{thm:main}

\newtheorem*{cor:sameequivrelation}{Corollary~\ref{cor:sameequivrelation}}\begin{cor:sameequivrelation}  For any integer $r$, the equivalence relation on the set of knots generated by $r$--shake concordance is the same as that generated by concordance together with the relation $K\sim P_r(K)$ for all $K$ and all winding number one patterns $P$ with $\widetilde{P}$ a ribbon knot.
\end{cor:sameequivrelation}

From Theorem~\ref{thm:main} we also obtain a characterization of $r$--shake slice knots as follows. 

\newtheorem*{cor:characterizeshakeslice}{Corollary~\ref{cor:characterizeshakeslice}}\begin{cor:characterizeshakeslice} For any integer $r$, a knot $K$ is $r$--shake slice if and only if there exists a winding number one pattern $P$, with $\widetilde{P}$ ribbon, such that $P_r(K)$ is slice.\end{cor:characterizeshakeslice}

It is easy to see that any pattern knot induces, for each value of $r$, a \textit{satellite operator}, $$P_r:\C \to \C,$$ on the set of knot concordance classes $\C$. If $r=0$ the subscript is often suppressed. Some of our results are especially interesting in light of recent research on the injectivity and surjectivity of such satellite operators  \cite{CDR14, CHL11, DR13, Lev14}.  For example, reinterpreting Corollary~\ref{cor:characterizeshakeslice} in this language for $r=0$ yields the following.

\newtheorem*{cor:characterizezero2}{Corollary~\ref{cor:characterizezero2}}\begin{cor:characterizezero2} There exists a $0$--shake slice knot that is not a slice knot if and only if there exists some winding number one satellite operator $P:\C\to\C$, with $\widetilde{P}$ ribbon, which fails to be weakly injective, i.e. there exists a knot $K\neq 0$ (not slice) such that $P(K)=0$ (is slice).\end{cor:characterizezero2}

Compare this to the result from \cite{CDR14} that for any winding number one $P$ with $\widetilde{P}$ slice, the induced operator $P:\C^*\to\C^*$  is injective (here $\C^*$ is concordance in a homology $S^3\times [0,1]$); which implies that if  $P(K)$ is slice then $K$ is slice in a homology $4$--ball, agreeing with the conclusion of Proposition~\ref{prop:shakeconchomcob}. Similarly, recall that, for any $r\neq 0$, there \textit{do} exist knots which are $r$--shake slice but not slice, due to \cite{Ak77, AbeJongOmaeTake13}. Thus, for each $r\neq 0$, there must exist a pattern $P$ with $\widetilde{P}$ ribbon, such that for some non-slice $K$, $P_r(K)$ is slice. 

More generally, we also obtain new results regarding the \textit{$r$--shake genus} of a knot. For a knot $K$, the $r$--shake genus of $K$, denoted $\gsh^r(K)$, is the least genus of a smooth connected submanifold representing a generator of $H_2(W_K^r)$. 

Clearly, $\gsh^r(K)\leq \g_4(K)$ for all $r$, where $\g_4$ denotes slice genus. It is an open question whether $\gsh^0(K)$ is equal to $\g_4(K)$ for all $K$ \cite[Problem 1.41(A)]{kirbylist}; this is clearly a generalization of the question of whether all 0--shake slice knots are slice. Note that the previously mentioned work of \cite{Ak77, AbeJongOmaeTake13} shows that the $r$--shake genus can be strictly less than the slice genus for $r\neq 0$.

We establish what we call an  $r$--shake slice--Bennequin inequality, which becomes our main tool.

\newtheorem*{cor:shakeslicebennequin}{Corollary~\ref{cor:shakeslicebennequin}}\begin{cor:shakeslicebennequin} [$r$--shake slice--Bennequin inequality]For any Legendrian representative $\mathcal{K}$ of a knot $K$ with $\tb(\mathcal{K})-1\geq r$, $$\tb(\mathcal{K})+\lvert\rot(\mathcal{K})\rvert\leq 2\gsh^r(K)-1.$$
\end{cor:shakeslicebennequin}

As a result, Thurston--Bennequin numbers can obstruct a knot's being $r$--shake slice and more generally can give a lower bound on the $r$--shake genus. We show that the infinite family of knots from Theorem~\ref{thm:examples} have distinct $r$--shake genera,  implying that the $r$--shake genus is not an invariant of $r$--shake concordance (Corollary~\ref{cor:shakegenusnotinvariant}).

We also obtain the following result, which allows us to restate questions about the $r$--shake genera and slice genus of a knot in terms of the slice genera of its satellites. 

\newtheorem*{prop:shakedecreaseslice}{Proposition~\ref{prop:shakedecreaseslice}}\begin{prop:shakedecreaseslice} Fix an integer $r$ and knot $K$; then $\gsh^r(K)=\g_4(K)$ if and only if $\g_4(P_r(K))\geq \g_4(K)$ for all winding number one patterns $P$ with $\widetilde{P}$ slice. \end{prop:shakedecreaseslice}

There are no known examples of patterns $P$ with $\widetilde{P}$ slice which strictly decrease slice genus, that is, $\g_4(P(K))<\g_4(K)$ for some $K$. See Section \ref{sec:shakegenus} for additional results about shake genus.

Lastly, in Section~\ref{sec:oldexamples} we describe how the previous examples of $r$--shake slice knots given by Akbulut and Abe--Jong--Omae--Takeuchi satisfy our characterization. Our characterization theorem also allows us to construct possible new examples of $r$--shake slice knots, as follows.

\newtheorem*{prop:newshakeslice}{Proposition~\ref{prop:newshakeslice}}\begin{prop:newshakeslice}Let $P$ be a winding number one pattern in a solid torus $V$, such that $\widetilde{P}$ is slice, and the meridian of $P$ is in the subgroup of $\pi_1(V-N(P))$ normally generated by the meridian of $V$. Then for any integer $r$, the knot $P_r(U)$ is $r$--shake slice, where $U$ is the unknot, modulo the smooth 4-dimensional Poincar\'{e} Conjecture.
\end{prop:newshakeslice}

Examples of such knots are given in Figure~\ref{fig:newshakeslice}. Unfortunately, the above proposition does not yield any new knots that are possibly 0--shake slice since if $r=0$, $P_r(U)$ is just the knot $\widetilde{P}$ which is slice by hypothesis.

\subsection*{Remark} Anthony Bosman has extended several of our results to links \cite{Bos15}.

\subsection*{Outline} Section~\ref{sec:background} gives some background and definitions. Section~\ref{sec:characterization} is devoted to proving our characterization theorem, Theorem~\ref{thm:main}, while Section~\ref{sec:examples} constructs the examples mentioned in Theorem~\ref{thm:examples}. Section~\ref{sec:obstructions} is a short section gathering together several obstructions to $r$--shake concordance. In Section~\ref{sec:shakegenus} we describe several properties of $r$--shake genus and prove the $r$--shake slice--Bennequin inequality, Corollary~\ref{cor:shakeslicebennequin}. Lastly, in Section~\ref{sec:oldexamples} we show that the previously known examples of $r$--shake slice knots satisfy our characterization modulo the smooth 4--dimensional Poincar\'{e} Conjecture, and give new examples of possible $r$--shake slice knots.

\subsection*{Acknowledgments} This project started when the second author was in her final year as a PhD student of the first author. The second author is deeply indebted to the first for his careful guidance and constant support, given freely and often, even after she graduated. 

\subsection*{Note} The first author, Tim Cochran, passed away unexpectedly and tragically in December 2014, shortly before this paper was posted on the math arXiv. 

%====================================================
\section{Background and preliminaries}\label{sec:background}

We first review well-known alternative definitions of $r$--shake slice knots and $r$--shake concordance.

\begin{figure}[b]
\centering
\includegraphics[width=2in]{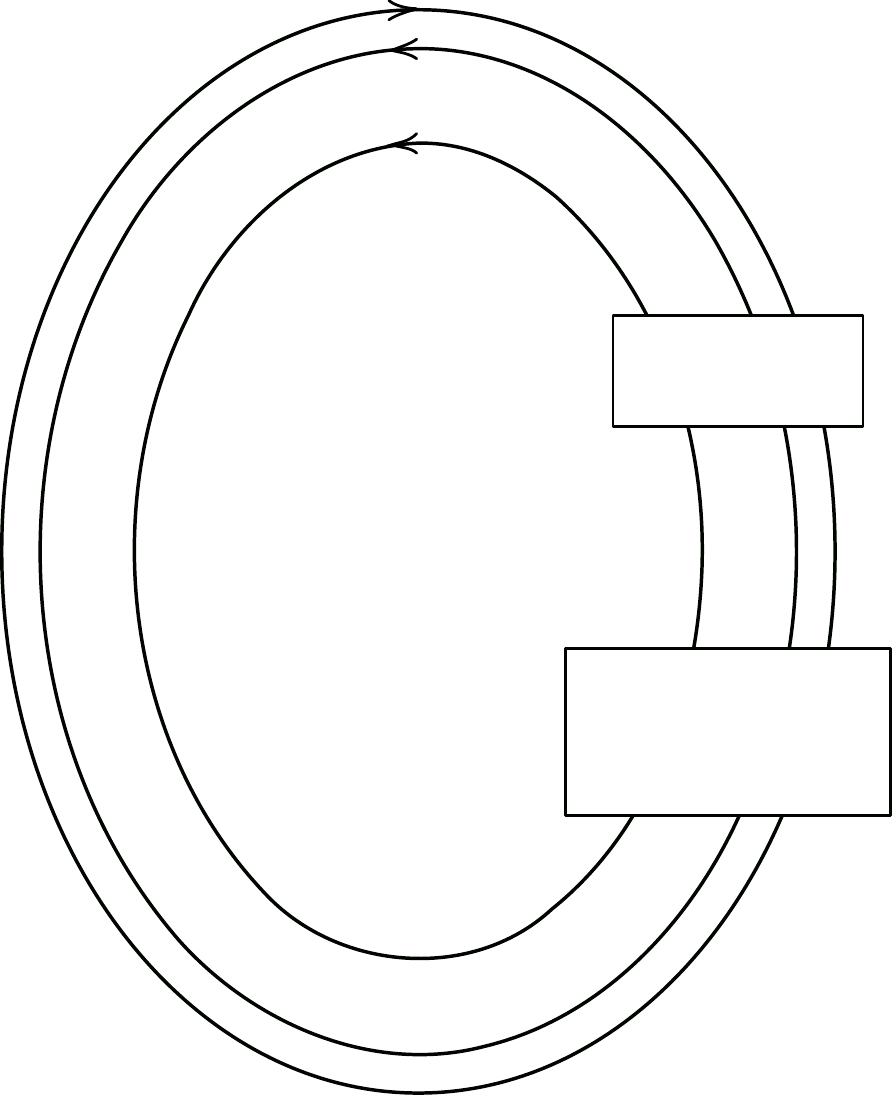}
\put(-0.475,0.75){\Large $K$}
\put(-0.4,1.6){$r$}
\put(-1.88,1.25){\small$\cdots$}
\caption{An $r$--shaking of $K$.}\label{fig:r_shaking}
\end{figure}

\begin{defn}For any knot $K$, $n\geq 0$, and $r\in\mathbb{Z}$, a \textbf{$2n+1$--component $r$--shaking} of $K$ is a collection of $2n+1$ $r$--framed parallels of $K$, where $n+1$ of the parallels are oriented in the direction of $K$ and the $n$ remaining parallels are oriented in the opposite direction.  When the number of components is irrelevant, we use the phrase `$r$--shaking of $K$' for economy. \end{defn}

Figure \ref{fig:r_shaking} gives a schematic representation of an $r$--shaking of $K$. The box containing $r$ indicates that all the strands passing vertically through the box should be given $r$ full twists. The box containing $K$ indicates that all the strands passing vertically through the box should be tied into 0--framed parallels of the tangle corresponding to $K$ (the strands passing upwards have the knot type of $K$ and the strands passing downwards have the knot type of the reverse of $K$.)

\begin{defn}(Alternative Definition) $K$ is \textbf{$r$--shake slice} if some $r$--shaking of $K$ bounds a smooth, properly embedded, compact, connected genus zero surface in $B^4$. 

The knots $K_0$ and $K_1$ are \textbf{$r$--shake concordant} if  there is a smooth, properly embedded, compact, connected, genus zero surface $F$ in $S^3\times [0,1]$, such that $F\,\cap\,S^3 \times \{0\}$ is an $r$--shaking of $K_0$ and $F\,\cap\,S^3 \times \{1\}$ is an $r$--shaking of $K_1$ (although after taking into account the usual orientation conventions the latter will be a $(-r)$--shaking of $-K_1$). $F$ is said to be an \textbf{$r$--shake concordance} between $K_0$ and $K_1$. $K_0$ is said to be \textbf{$(p,\,q)$ $r$--shake concordant }to $K_1$ for $p$, $q \geq 1$ if there is an $r$--shake concordance between them whose boundary consists of a $p$--component $r$--shaking of $K_0$ and a $q$--component $r$--shaking of $K_1$. 

The \textbf{$r$--shake genus} of $K$, denoted $\gsh^r(K)$, is the least genus of a smooth, properly embedded, compact, connected genus zero surface bounded by an $r$--shaking of $K$ in $B^4$.
\end{defn}

\begin{proof}[Proof of the equivalence of the definitions] Suppose the $2$--sphere $S\hookrightarrow W_K^r$ represents the negative of the preferred generator of $H_2(W_K^r)$. By this we mean that, after isotopy, $S$ intersects the added $2$--handle in $2n+1$ parallels of the core, for some $n$, where $n+1$ of these disks are oriented so that their boundaries are $r$--framed parallels of the reverse of $K$ and the others are oriented so that their boundaries are $r$--framed parallels of $K$.  Let $F$ be the oriented genus zero surface obtained from $S$ by deleting the interiors of these disks. Since the induced orientation on the boundary circles is opposite for $F$ compared to that by the recently removed disks, the oriented boundary of $F$ is the desired $r$--shaking of $K$. The converse is proved by reversing these steps.

The proof in the case of shake concordance and shake genus is similar.\end{proof}

\begin{rem}\label{rem:parallelcopies} Since an $r$--shake concordance $F\hookrightarrow S^3\times [0,1]$ has a trivial normal bundle, we can take ``parallel'' copies of it. There are $\pi_1(SO(2))\cong\mathbb{Z}$ trivializations of this bundle and hence an infinite number of choices for a parallel copy. The normal vector field given by the $r$--framing on $\partial F\hookrightarrow S^3\times \{0,1\}$ can be extended to all of $F$ (by linking number considerations). \textit{This is the notion of parallel copy we will always use in this paper.} We will normally want to take $2\ell+1$ parallel copies, $\ell$ of which have altered orientations, which we refer to as an algebraically one number of copies. The reader can easily verify that this notion of parallelism has the following feature: an (algebraically one) number of parallel copies of an $r$--shaking of $K$ is another $r$--shaking of $K$;  and an (algebraically one)  number of parallel copies of $F$ is (after connecting components) another $r$--shake concordance.\end{rem}

Clearly $K_0$ is $(p,\,q)$ $r$--shake concordant to $K_1$ if and only if $K_1$ is $(q,\,p)$ $r$--shake concordant to $K_0$. Thus the relation of $r$--shake concordance is reflexive and symmetric, but \textit{not necessarily transitive}.  However, the following is easily seen to hold.

\begin{prop}\label{prop:easytransitivity} If $K_0$ is $(p,1)$ $r$--shake concordant to $K_1$, and $K_1$ is $(m,1)$ $r$--shake concordant to $K_2$, then $K_0$ is $(pm,1)$ $r$--shake concordant to $K_2$. By symmetry, if $K_0$ is $(1,p)$ $r$--shake concordant to $K_1$, and $K_1$ is $(1,m)$ $r$--shake concordant to $K_2$, then $K_0$ is $(1,\,pm)$ $r$--shake concordant to $K_2$
\end{prop}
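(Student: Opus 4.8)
The plan is to glue the two given shake concordances end to end, after first replacing the first one by an appropriate number of parallel copies so that the numbers of boundary components match along the gluing level. So let $F\subset S^3\times[0,1]$ be a $(p,1)$ $r$--shake concordance from $K_0$ to $K_1$, with $F\cap(S^3\times\{0\})$ a $p$--component $r$--shaking of $K_0$ and $F\cap(S^3\times\{1\})$ a $1$--component $r$--shaking of $K_1$; and let $G\subset S^3\times[1,2]$ be an $(m,1)$ $r$--shake concordance from $K_1$ to $K_2$, with an $m$--component shaking of $K_1$ on the $\{1\}$ end and a $1$--component shaking of $K_2$ on the $\{2\}$ end. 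Note that $p$ and $m$ are odd, being of the form $2n+1$.

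First I would apply Remark~\ref{rem:parallelcopies}: since $m=2\ell+1$ for some $\ell\geq 0$, take the algebraically one collection of $m$ parallel copies of $F$ ($\ell$ of them with reversed orientation) and connect the components by tubes in $S^3\times[0,1]$ to obtain a connected genus zero surface $F'$, which is again an $r$--shake concordance. By the feature of this notion of parallelism noted in that remark, $F'\cap(S^3\times\{0\})$ is an algebraically one collection of $m$ copies of a $p$--component $r$--shaking of $K_0$, hence a $pm$--component $r$--shaking of $K_0$, and $F'\cap(S^3\times\{1\})$ is an $m$--component $r$--shaking of $K_1$; a short count of the forward and reversed parallels confirms the orientation distributions are the required $(pm\pm 1)/2$ and $(m\pm 1)/2$. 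Next I would arrange that $F'\cap(S^3\times\{1\})$ and $G\cap(S^3\times\{1\})$ literally coincide: any two $m$--component $r$--shakings of $K_1$ are ambiently isotopic in $S^3$ with matching framings, since each is the $m$--cable of $K_1$ with respect to the $0$--framing, carrying the $r$--framing, with $(m+1)/2$ strands oriented along $K_1$ and $(m-1)/2$ reversed, and any such configuration can be carried to any other by an ambient isotopy of $S^3$. Extending this isotopy over a collar, I may assume the two surfaces agree on $S^3\times\{1\}$. Gluing $F'$ to $G$ along $S^3\times\{1\}$ then produces a surface $H$ in $S^3\times[0,2]\cong S^3\times[0,1]$, properly embedded, compact, connected, and of genus zero (gluing genus zero surfaces along common boundary circles creates no genus), with $H\cap(S^3\times\{0\})$ a $pm$--component $r$--shaking of $K_0$ and $H\cap(S^3\times\{2\})$ a $1$--component $r$--shaking of $K_2$. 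Hence $K_0$ is $(pm,1)$ $r$--shake concordant to $K_2$. The second statement follows from the first together with the elementary symmetry that $K_0$ is $(a,b)$ $r$--shake concordant to $K_1$ if and only if $K_1$ is $(b,a)$ $r$--shake concordant to $K_0$: from the hypotheses, $K_2$ is $(m,1)$ $r$--shake concordant to $K_1$ and $K_1$ is $(p,1)$ $r$--shake concordant to $K_0$, so by the first part $K_2$ is $(pm,1)$ $r$--shake concordant to $K_0$, whence $K_0$ is $(1,pm)$ $r$--shake concordant to $K_2$.

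I expect the only genuinely fussy points to be the bookkeeping in the last two steps: checking that the parallel-copy construction of Remark~\ref{rem:parallelcopies} really does convert a $p$--component shaking into a $pm$--component shaking with the correct forward/reversed counts, and that any two $m$--component $r$--shakings of a fixed knot are isotopic rel framing so that the pieces may be glued. Neither is conceptually hard; essentially all of the topological content has already been packaged into Remark~\ref{rem:parallelcopies}.
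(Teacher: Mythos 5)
There is a genuine gap, and it sits exactly at the step you treated as routine bookkeeping: tubing the $m$ parallel copies of $F$ into a single connected surface $F'$ \emph{before} gluing. Once $F'$ is connected, gluing it to the connected surface $G$ along the $m$ circles in $S^3\times\{1\}$ creates genus: gluing two connected surfaces along $k$ boundary circles raises the genus by $k-1$ (two annuli glued along both boundary circles form a torus), so the claim that ``gluing genus zero surfaces along common boundary circles creates no genus'' is false when more than one circle is involved. Concretely, each copy of $F$ has $\chi=1-p$, so $F'$ (built with $m-1$ tubes) has $\chi=m(1-p)-2(m-1)=2-m-mp$, while $\chi(G)=1-m$; the glued surface $H$ is connected with $pm+1$ boundary circles and $\chi(H)=3-2m-mp$, which forces the genus of $H$ to be $m-1$, not $0$, whenever $m>1$. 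Since an $r$--shake concordance is by definition a genus zero surface, your $H$ does not witness the conclusion.

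The repair is the construction the paper actually uses: do not connect the $m$ algebraically one parallel copies of $F$ to one another at all. Leave them disjoint and glue each copy to $G$ along its single boundary circle at level $1$ (after the matching isotopy you describe, which is fine). Connectedness of the result is then supplied by $G$, and the count gives $\chi=m(1-p)+(1-m)=1-pm$ with $pm+1$ boundary circles, hence genus zero, i.e.\ a $(pm,1)$ $r$--shake concordance from $K_0$ to $K_2$. The parenthetical ``after connecting components'' in Remark~\ref{rem:parallelcopies} is intended for when the parallel copies must stand alone as a shake concordance, not as a preliminary step before gluing along several circles. Your remaining points --- the forward/reversed strand counts, the isotopy matching the two $m$--component shakings of $K_1$, and the symmetry argument for the second statement --- are all correct and agree with the paper.
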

\begin{proof} Using Remark~\ref{rem:parallelcopies}, glue $m$ (algebraically one) parallel copies of the $(p,1)$ $r$--shake concordance $F_{01}\hookrightarrow S^3\times [0,1]$ to one copy of the $(m,1)$ $r$--shake concordance $F_{12}\hookrightarrow S^3\times [1,2]$, and observe that it has genus zero and the appropriate boundary. 
\end{proof}

\begin{prop}\label{prop:shakesliceandconcordant} A knot $K$ is $r$--shake slice if and only if $K$ is $(m,1)$ $r$--shake concordant to the unknot, for some $m$. \end{prop}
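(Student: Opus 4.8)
The plan is to argue entirely within the surface formulation of the alternative definitions, moving between $B^4$ and $S^3\times[0,1]$ by either deleting or gluing in a $4$--ball along a trivially embedded disk. The one observation that makes both directions routine is that a $1$--component $r$--shaking of the unknot $U$ is, as an oriented link in $S^3$, simply the unknot: with a single component the ``$r$--framing'', the choice of orientation, and the $(-r)$--shaking-of-$-U$ convention on the $S^3\times\{1\}$ end all carry no information.

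For the forward implication, suppose $K$ is $r$--shake slice, witnessed by a smooth, connected, properly embedded genus zero surface $F\subset B^4$ whose boundary is some $(2n+1)$--component $r$--shaking $L_K$ of $K$; set $p=2n+1$. I would choose an interior point of $F$, take a small $4$--ball $B_0\subset\operatorname{int}(B^4)$ meeting $F$ in a standard trivial disk $D_0$ through that point, and delete $\operatorname{int}(B_0)$. The result $F\setminus\operatorname{int}(D_0)$ is a smooth, connected, properly embedded genus zero surface in $B^4\setminus\operatorname{int}(B_0)\cong S^3\times[0,1]$, whose boundary is $L_K$ on one end and the unknot $\partial D_0$ on the other. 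After matching the orientation conventions already used in the proof of the equivalence of the definitions, this surface is precisely a $(p,1)$ $r$--shake concordance from $K$ to $U$.

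For the converse, suppose $F\subset S^3\times[0,1]$ is a smooth, connected, properly embedded genus zero $(m,1)$ $r$--shake concordance from $K$ to $U$, so that $F$ meets $S^3\times\{0\}$ in an $m$--component $r$--shaking $L_K$ of $K$ and meets $S^3\times\{1\}$ in a single unknot $L_U$. I would glue a $4$--ball to $S^3\times[0,1]$ along $S^3\times\{1\}$ to recover $B^4$; since $L_U$ is unknotted it bounds a trivial disk $D$ in that $4$--ball, and $F\cup_{L_U}D$ is then a smooth, connected, properly embedded genus zero surface in $B^4$ whose boundary is the $r$--shaking $L_K$ of $K$. Hence $K$ is $r$--shake slice.

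In short, the proposition amounts to two cut--and--paste moves, and the only step requiring any care is the bookkeeping of orientations on the $S^3\times\{1\}$ end; this is dispatched once one notes that for a single unknotted boundary component all of that extra data is vacuous. I do not expect a substantive obstacle.
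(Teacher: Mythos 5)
Your argument is correct and is essentially the paper's own proof: in the forward direction the paper likewise removes a small neighborhood of a point of the genus zero surface to produce the $(m,1)$ $r$--shake concordance to the unknot, and in the backward direction it caps off the unknot with its standard slice disk. Your extra remark that a single-component $r$--shaking of the unknot is just the unknot is the same bookkeeping the paper leaves implicit.
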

\begin{proof}In the forward direction, we cut out a small neighborhood in $B^4$ of a point in the genus zero surface bounded by an $r$--shaking of $K$. In the backward direction, we cap off the unknot by its standard slice disk.\end{proof}

We also easily see that for any knot $K$ and integer $r$, $\gsh^r(K)=\gsh^{-r}(-K)$. 
%===========================================================
\section{Characterizing shake concordance of knots}\label{sec:characterization}

In this section we characterize $r$--shake concordance in terms of concordance and certain winding number one satellite operations. 

For simple winding number one patterns $P$, it is sometimes easy to exhibit a genus zero surface cobounded by $P(K)$ and a 0--shaking of $K$, thereby demonstrating that $P(K)$ is $0$--shake concordant to $K$.  One such case is shown in Figure \ref{fig:PKasfusion}. The figure on the left shows a 3--component 0--shaking of $K$ and the figure on the right shows how we may add two bands to obtain the knot $P(K)$, for the Mazur pattern $P$ shown in Figure \ref{fig:P} (for the sake of clarity the attached bands are drawn in a slighter lower weight).

\begin{figure}[t]
\centering
\includegraphics[width=4.5in]{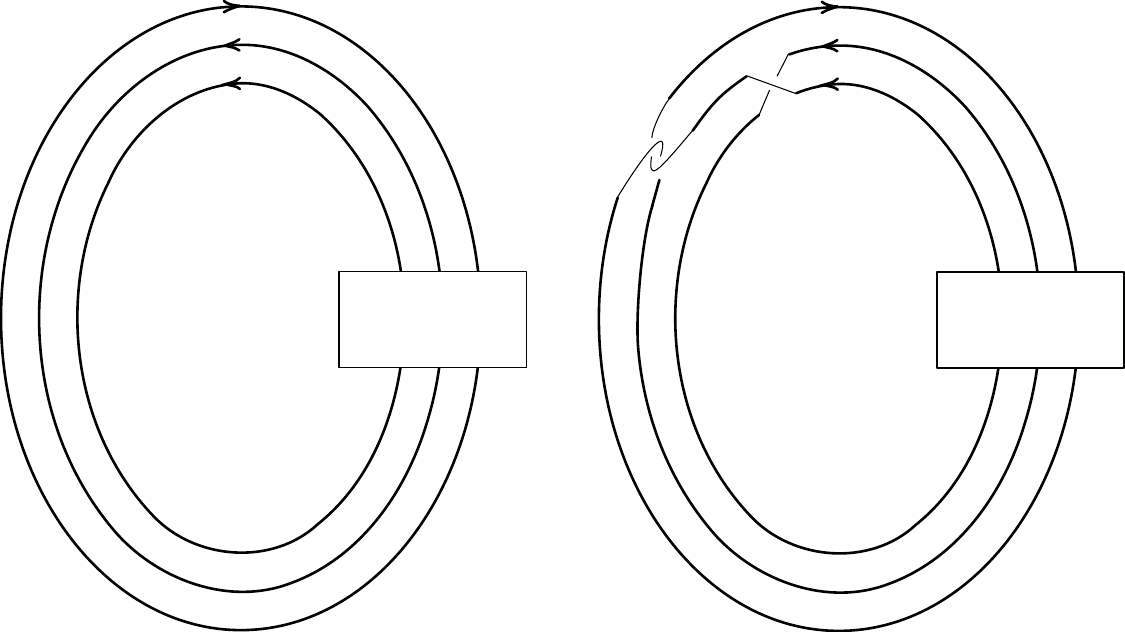}
\put(-0.45,1.175){\Large $K$}
\put(-2.85,1.175){\Large $K$}
\caption{The knot $P(K)$ (right) as a fusion of the 3--component 0--shaking of $K$ (left). The fusion bands are drawn in slightly lower weight for clarity.}\label{fig:PKasfusion}
\end{figure}
This philosophy leads to the following general result.

\begin{prop}\label{prop:satelliteimpliesshake1n} Suppose $P$ is a winding number one pattern  where $\widetilde{P}$ is a slice knot. Then $P_r(K)$ is $(1,\,n)$ $r$--shake concordant to $K$. Moroever, $n\geq 1$ can be taken to be the geometric winding number of $P$. \end{prop}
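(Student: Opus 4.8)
The plan is to exhibit an explicit genus-zero cobordism in $S^3\times[0,1]$ whose bottom boundary is an $r$--shaking of $K$ and whose top boundary is the knot $P_r(K)$. The key observation is that the $r$--twisted satellite $P_r(K)$ lives in a neighborhood of $K$, which can be taken to be an $r$--framed solid torus $V$ about $K$; so it suffices to work first in the abstract solid torus $V$ and then glue in $K$. Concretely, I would start from the pattern $P\subset S^1\times D^2$ and recall that, since $P$ has winding number one, $P$ is a fusion of the core circle $S^1\times\{0\}$ together with some number of null-homologous circles in $V$; more precisely, the geometric winding number being $n$ means $P$ meets a meridian disk in $n$ points, of which $n+1$ are positively oriented and $n$ negatively oriented after accounting for orientations — so $P$ is obtained by fusing together $2n+1$ parallel copies of the core (with the appropriate algebraically-one orientation pattern) by $2n$ bands. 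This is exactly the picture suggested by Figure~\ref{fig:PKasfusion} for the Mazur pattern, now carried out in general.

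From this fusion picture I would build the cobordism. Running the fusion bands as a movie gives a genus-zero cobordism in $V\times[0,1]$ from the $2n+1$-component link (the parallel copies of the core) at one end to $P$ at the other end; attaching each band corresponds to a single band move, and because we are fusing (the bands join distinct components and the total count drops by $2n$ to produce a connected knot) the resulting surface is planar, i.e.\ genus zero. Now embed $V\times[0,1]$ in $S^3\times[0,1]$ by sending $V$ to the $r$--framed tubular neighborhood of $K$: the $2n+1$ parallel cores of $V$ become $2n+1$ $r$--framed parallels of $K$ with the correct algebraically-one orientations — that is, precisely a $(2n+1)$--component $r$--shaking of $K$ — while $P$ becomes the $r$--twisted satellite $P_r(K)$. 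The cobordism, being disjoint from a collar where we can cap the shaking side, witnesses that $P_r(K)$ is $(1,2n+1)$ $r$--shake concordant to $K$, hence $(1,n)$ in the paper's $(p,q)$ convention where $q=2n+1$...

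One subtlety I need to handle carefully is the hypothesis that $\widetilde P$ is slice, which has not yet been used: the construction above only relates $P_r(K)$ to $K$ via an \emph{$r$--shaking of $K$}, and that much is true for \emph{any} winding number one $P$. The slice hypothesis enters exactly when I want the cobordism itself to be genus zero as a surface between the \emph{honest knots} $P_r(K)$ and (a shaking of) $K$ rather than between $P_r(K)$ and $\widetilde P_r$-type debris. Here is where I would be precise: the fusion movie in $V$ produces, at the ``core'' end, not literally the $2n+1$ parallel cores but those cores together with a copy of $\widetilde P=P(U)$ sitting in a ball inside $V$ (this is the standard ``satellite $=$ shaking $\#$ pattern-knot'' decomposition). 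Capping that copy of $\widetilde P$ with its slice disk in a ball — pushed into $B^4$ or into $S^3\times[0,1]$ — kills it without adding genus, and what remains is exactly the $r$--shaking of $K$. So the logical skeleton is: (i) decompose $P_r(K)$ as a fusion of an $r$--shaking of $K$ with $\widetilde P$; (ii) use the slice disk for $\widetilde P$ to remove that summand via a genus-zero piece; (iii) check the orientation bookkeeping so that the $n+1$ vs.\ $n$ count on the parallels is correct and matches the definition of a shaking.

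The main obstacle, and the place I would spend the most care, is step (i) together with the orientation/framing bookkeeping: verifying that the geometric winding number $n$ really does give a fusion of $2n+1$ (not more) parallel copies of the core with the algebraically-one orientation pattern, and that the $r$--framing on the neighborhood $V$ of $K$ transports to the correct $r$--framing on each parallel in the resulting shaking. The genus computation itself is then routine (each band move on a fusion keeps the surface planar, and a slice disk in a ball contributes genus zero), and the ``$(1,n)$'' bound on the number of components is immediate from $n =$ geometric winding number. I would present Figure~\ref{fig:PKasfusion} as the model case and then argue the general statement by the same fusion-movie construction, being explicit about where sliceness of $\widetilde P$ is invoked.
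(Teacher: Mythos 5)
The central step of your argument --- step (i), the claim that any winding number one pattern $P$ (equivalently its satellite $P_r(K)$) decomposes as a fusion of an algebraically-one family of parallel copies of the core (an $r$--shaking of $K$) together with a split copy of $\widetilde{P}$ sitting in a ball --- is asserted as ``the standard satellite $=$ shaking $\#$ pattern-knot decomposition,'' but it is not a standard fact and you give no argument for it. This is precisely the crux of the proposition, not a routine preliminary: the paper itself presents the fusion picture of Figure~\ref{fig:PKasfusion} only as an easy special case for the Mazur pattern (``for simple winding number one patterns $P$, it is sometimes easy to exhibit\dots'') and then proves the general statement by a different, genuinely four-dimensional argument that never requires a band/fusion presentation of $P$. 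Namely, one takes a slice disk $\Delta$ for $\widetilde{P}$ in $S^3\times[0,1]$, makes it transverse to the annulus $A=\eta\times[0,1]$ over the meridian $\eta$ of the solid torus (with exactly $n$ intersection points, $n$ the geometric winding number), deletes small disks of $\Delta$ around these points, and tubes the resulting boundary circles along $A$ up to $S^3\times\{1\}$, where they become copies of the core of $ST=S^3-N(\eta)$. The resulting surface is planar, lies in $ST\times[0,1]$, and cobounds $P$ with an algebraically-one collection of $n$ parallel cores; applying the $r$--twisted satellite embedding then yields the $(1,n)$ $r$--shake concordance. Your route, by contrast, requires a purely three-dimensional structural statement about arbitrary winding number one patterns that is strictly stronger than what is needed (it would give a ribbon-type, saddles-only cobordism, whereas an arbitrary slice disk has births and deaths), and you would have to prove it before the rest of your argument can run; it is far from clear that it holds in general.

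Two smaller points of bookkeeping. First, your count is internally inconsistent: if the geometric winding number is $n$, then $P$ meets a meridian disk in $n$ points, of which $(n+1)/2$ are positive and $(n-1)/2$ negative (so $n$ is odd); it cannot meet the disk in $n$ points ``of which $n+1$ are positive and $n$ negative.'' Second, your reconciliation ``hence $(1,n)$ in the paper's $(p,q)$ convention where $q=2n+1$'' misreads the convention: in the paper $(1,n)$ literally means the shaking of $K$ has $n$ components, and the proposition's claim is that this number can be taken to equal the geometric winding number itself (e.g.\ $n=3$ for the Mazur pattern), with no reindexing by $2n+1$. These slips would be easy to fix, but the missing proof of the fusion decomposition is a genuine gap; to repair the argument you should either prove that decomposition (at least for the patterns you need) or adopt the slice-disk-and-tubing construction described above.
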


\begin{proof}  Let $\eta$ denote the meridian of the solid torus $ST$ containing $P$, i.e.\ $ST=S^3-N(\eta)$. From the definition of the satellite construction, it will suffice to show that, within $ST\times [0,1]$, $P\subseteq ST\times\{0\}$ cobounds a genus 0 surface with a 0--shaking of the core of $ST\times\{1\}$, i.e.\ $2k+1$ copies of the core, where $k+1$ copies are oriented in the direction of the longitude of $ST$ and the $k$ remaining copies are oriented in the opposite direction, for some $k\geq 0$. 

Let $\Delta\subseteq S^3\times [0,1]$ be a slice disk for $\widetilde{P}$.  Consider $A=\eta\times [0,1] \subseteq S^3\times [0,1]$. We can assume that $\Delta$ intersects $A$ transversely.  In fact, if the geometric winding number of $P$ is $n$ then we may assume that there are precisely $n$ such points of intersection. Let $x$ be one  intersection point between $A$ and $\Delta$, and $N(x)$ a small ball centered at $x$. The disk $\Delta$ intersects $N(x)$ in a disk and intersects $\partial N(x)$ in a circle, in fact, a meridional circle to $A$. Let $\overline{\Delta}$ be $\Delta - N(x)$. Choose an arc on $A$ connecting $x$ to some point on $\eta\times \{1\}$ in $S^3\times \{1\}$. The restriction to this arc of the unit normal bundle to $A$ is a tube connecting a component of the boundary of $\overline{\Delta}$ to a meridian of $\eta\times \{1\}$ which is a longitudinal circle of  $ST=S^3-N(\eta)$. The latter circle is oriented along the longitude of $ST$ if the intersection at $x$ is positive, and oriented in the opposite direction if the intersection at $x$ is negative. Do this for each point of intersection; the arcs from the intersection points to $S^3\times\{1\}$ can be assumed to be disjoint. By gluing these tubes to $\overline{\Delta}$ we get a genus zero surface $\Sigma$. Notice that $\Sigma \subseteq (S^3-N(\eta))\times [0,1] = ST\times [0,1]$, and is cobounded by $P\subseteq S^3\times\{0\}$ and $n$ copies of the core  of $ST\subseteq S^3\times [0,1]$. Since the algebraic intersection number of $\Delta$ and $A$ is 1, $\Sigma \cap ST\times\{1\}$ is exactly a $0$--shaking of the core of $ST\times\{1\}$. \end{proof}

\begin{cor}\label{cor:satelliteimpliesshake} Suppose for knots $K$ and $J$ there exist winding number one patterns $P$ and $Q$, with $\widetilde{P}$ and $\widetilde{Q}$ slice knots, such that $P_r(K)$ is concordant to $Q_r(J)$ for some $r$. Then $K$ is  $r$--shake concordant to $J$. \end{cor}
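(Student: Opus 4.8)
The plan is to deduce Corollary~\ref{cor:satelliteimpliesshake} from Proposition~\ref{prop:satelliteimpliesshake1n} together with the easy transitivity result, Proposition~\ref{prop:easytransitivity}, and the observation that concordance is a special (genus zero, two-boundary-component) case of $r$--shake concordance. The key point is that $r$--shake concordance of the form $(1,n)$ and $(m,1)$ can be composed, so I want to assemble a chain of such relations linking $K$ to $J$.

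First I would record that by Proposition~\ref{prop:satelliteimpliesshake1n}, applied to the pattern $P$, the knot $P_r(K)$ is $(1,n)$ $r$--shake concordant to $K$, which is the same as saying $K$ is $(n,1)$ $r$--shake concordant to $P_r(K)$ by the reflexivity/symmetry of the $(p,q)$ relation. Similarly, applying Proposition~\ref{prop:satelliteimpliesshake1n} to $Q$ gives that $Q_r(J)$ is $(1,m)$ $r$--shake concordant to $J$. Next, since $P_r(K)$ is concordant to $Q_r(J)$, a genuine concordance annulus is in particular a genus zero surface in $S^3\times[0,1]$ whose two boundary circles are the single-component (hence $(1,1)$) $r$--shakings of the two knots — note that a knot itself is a $1$--component $r$--shaking only after the appropriate framing, but a concordance annulus can be modified near its boundary or one simply observes that a $1$--component $0$--parallel with the correct framing still bounds; more carefully, I would phrase this using Remark~\ref{rem:parallelcopies} so that a concordance yields a $(1,1)$ $r$--shake concordance. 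Thus I have the chain: $K$ is $(n,1)$ $r$--shake concordant to $P_r(K)$, which is $(1,1)$ $r$--shake concordant to $Q_r(J)$, which is $(1,m)$ $r$--shake concordant to $J$.

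Then I would invoke Proposition~\ref{prop:easytransitivity} to splice these together. Composing the $(n,1)$ and $(1,1)$ pieces is slightly awkward because Proposition~\ref{prop:easytransitivity} as stated composes $(p,1)$ with $(m,1)$ (or $(1,p)$ with $(1,m)$); I would first reverse the orientation of $[0,1]$ to view the concordance from $Q_r(J)$ to $P_r(K)$ as a $(1,1)$ $r$--shake concordance, then compose $K \to P_r(K)$ (as a $(n,1)$ relation read from $K$) appropriately, ultimately arriving at a single $(N,1)$ or $(1,N)$ $r$--shake concordance between $K$ and $J$ for a suitable $N$. Unwinding all orientation bookkeeping, the end result is that $K$ is $r$--shake concordant to $J$, which is the claim.

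The main obstacle I anticipate is purely organizational rather than substantive: making sure that all the $(p,q)$-indices line up so that Proposition~\ref{prop:easytransitivity} genuinely applies at each gluing, and that a classical concordance is correctly promoted to a $(1,1)$ $r$--shake concordance (one must check the framing on the two boundary annuli agrees with the $r$--framing convention, which follows from linking-number considerations exactly as in Remark~\ref{rem:parallelcopies}). I expect the proof to be just a few lines: cite Proposition~\ref{prop:satelliteimpliesshake1n} twice, note concordance gives a $(1,1)$ $r$--shake concordance, and glue using Proposition~\ref{prop:easytransitivity}.
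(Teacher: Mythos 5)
Your proposal is correct and takes essentially the same route as the paper: apply Proposition~\ref{prop:satelliteimpliesshake1n} to both $P$ and $Q$, regard the concordance between $P_r(K)$ and $Q_r(J)$ as the middle piece, and stack the three surfaces in $S^3\times[0,1]$. One minor simplification: since each glued end is a single component, no parallel copies or index bookkeeping from Proposition~\ref{prop:easytransitivity} are actually needed, and the resulting surface is an $(n,m)$ $r$--shake concordance (not necessarily of the form $(N,1)$ or $(1,N)$), which already gives the conclusion.
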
 
\begin{proof} We are given a concordance $C$ between $P_r(K)$ and $Q_r(J)$. By Proposition~\ref{prop:satelliteimpliesshake1n}, we have an $(m,\,1)$ $r$--shake concordance from  $K$ to $P_r(K)$ (call it $S_1$) and a $(1,\,n)$ $r$--shake concordance from $Q_r(J)$ to $J$ (call it $S_2$), for some $m$, $n \geq 1$. By gluing together $S_1$, $C$, and $S_2$, as in Proposition~\ref{prop:easytransitivity}, we get an $(m,\,n)$ $r$--shake concordance from $K$ to $J$. \end{proof}

In fact, it is sufficient for $P_r(K)$ and $Q_r(J)$ to be merely $r$--shake concordant, as we see in the proposition below. 

\begin{cor}\label{cor:bettersatelliteimpliesshake} For winding number one patterns $P$ and $Q$ with $\widetilde{P}$ and $\widetilde{Q}$ slice knots, and knots $K$ and $J$, if $P_r(K)$ is $r$--shake concordant to $Q_r(J)$, $K$ is $r$--shake concordant to $J$.\end{cor}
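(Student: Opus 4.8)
The plan is to combine Corollary~\ref{cor:satelliteimpliesshake} with the transitivity afforded by Proposition~\ref{prop:easytransitivity}, using the key input that a satellite $P_r(K)$ with $\widetilde{P}$ slice is itself $r$--shake concordant to $K$ via a concordance with a \emph{one}-sided boundary (Proposition~\ref{prop:satelliteimpliesshake1n}). Concretely: by Proposition~\ref{prop:satelliteimpliesshake1n} there is an $(m,1)$ $r$--shake concordance $S_1$ from $K$ to $P_r(K)$ and a $(1,n)$ $r$--shake concordance $S_2$ from $Q_r(J)$ to $J$. By hypothesis there is some $r$--shake concordance $G$ from $P_r(K)$ to $Q_r(J)$; say it realizes a $(p,q)$ $r$--shake concordance. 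We then want to glue $S_1$, $G$, and $S_2$ end to end inside $S^3\times[0,3]$ to produce an $r$--shake concordance from $K$ to $J$.

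The one subtlety is matching up the boundaries at the gluing interfaces, since $S_1$ ends on a $(1)$-sided shaking of $P_r(K)$ while $G$ begins on a $(p)$-component shaking of $P_r(K)$ — these need not have the same number of components. This is exactly the situation handled in the proof of Proposition~\ref{prop:easytransitivity}: using Remark~\ref{rem:parallelcopies}, take $p$ algebraically-one parallel copies of $S_1$, so that its outgoing boundary becomes a $p$-component $r$--shaking of $P_r(K)$, which can then be glued to the incoming boundary of $G$. Similarly, since $G$ ends on a $q$-component shaking of $Q_r(J)$ while $S_2$ begins on a $(1)$-sided shaking of $Q_r(J)$, take $q$ algebraically-one parallel copies of $S_2$. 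Gluing the resulting surfaces produces a connected genus-zero surface in $S^3\times[0,3]$ whose boundary is an $r$--shaking of $K$ on one end and an $r$--shaking of $J$ on the other; rescaling the interval identifies this with an $r$--shake concordance from $K$ to $J$.

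I expect the main (mild) obstacle to be bookkeeping the orientations and framings at the interfaces: one must check that the $r$--framing on the boundary of each piece extends over the glued surface, and that the ``algebraically one'' parallel-copy construction indeed yields, at each interface, a shaking with $k+1$ positively and $k$ negatively oriented strands matching the adjacent piece. But this is precisely the content of Remark~\ref{rem:parallelcopies} and the proof of Proposition~\ref{prop:easytransitivity}, so it requires no new idea. In fact, the statement can be seen as a direct corollary: apply Corollary~\ref{cor:satelliteimpliesshake} with the word ``concordant'' replaced by ``$r$--shake concordant,'' noting that its proof only used the concordance $C$ as a middle piece to be glued via Proposition~\ref{prop:easytransitivity}, and that proposition accepts any $r$--shake concordance in that slot, not just an honest concordance.
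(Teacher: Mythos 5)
Your proposal is correct and follows essentially the same route as the paper's proof: take the one-sided shake concordances from Proposition~\ref{prop:satelliteimpliesshake1n}, form algebraically-one parallel copies via Remark~\ref{rem:parallelcopies} to match the component counts of the given shake concordance between $P_r(K)$ and $Q_r(J)$, and glue end to end as in Proposition~\ref{prop:easytransitivity}. The only differences are notational (the paper writes the boundary components as $2k+1$ and $2l+1$ rather than $p$ and $q$).
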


\begin{proof}  We are given an $r$--shake concordance $S$ between $P_r(K)$ and $Q_r(J)$ (suppose the boundary consists of a $2k+1$ component $r$--shaking of $P_r(K)$ and a $2l+1$ component $r$--shaking of $Q_r(J)$). By Proposition \ref{prop:satelliteimpliesshake1n}  there exists $S_1$, a $(1,\,m)$ $r$--shake concordance from $P_r(K)$ to $K$, and $S_2$, a $(1,\,n)$ $r$--shake concordance from $Q_r(J)$ to $J$. Using Remark~\ref{rem:parallelcopies}, by gluing,  onto $S$, $2k+1$ copies of $S_1$ (algebraically one) and $2l+1$ copies of $S_2$ (algebraically one), we get a $(m(2k+1),\,n(2l+1))$ $r$--shake concordance from $K$ to $J$. \end{proof} 

If we let $P=Q$ in the above proposition we see that if $P_r(K)$ is $r$--shake concordant to $P_r(J)$, then $K$ is $r$--shake concordant to $J$. This is quite similar to the injectivity result for (untwisted) winding number one satellite operators in the realm of concordance, proved in \cite{CDR14}. In fact, if $\approx_r$ denotes the equivalence relation generated by $r$-shake concordance, we obtain the following corollary.

\begin{cor}\label{cor:shakeinjectivity} If $P$ is a winding number one pattern with $\widetilde{P}$ a slice knot, then the satellite operator $P_r:\C\to \C$ induces a bijective map
$$
P_r:\frac{\mathcal{C}}{\approx_r}\to \frac{\mathcal{C}}{\approx_r},
$$
which is, in fact, the identity map.
\end{cor}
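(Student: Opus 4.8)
\textbf{Proof proposal for Corollary~\ref{cor:shakeinjectivity}.}

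The plan is to deduce everything from Corollary~\ref{cor:bettersatelliteimpliesshake} together with Proposition~\ref{prop:satelliteimpliesshake1n}, applied to the identity relation. First I would check that the assignment is well defined on $\C/\!\approx_r$. Two concordant knots $K$ and $J$ are certainly $r$--shake concordant (cap off with the product annulus after taking parallel copies, or simply note concordance is a special case of the definition), hence equivalent under $\approx_r$; moreover a concordance from $K$ to $J$ induces a concordance from $P_r(K)$ to $P_r(J)$ by the standard satellite argument (reglue the concordance inside the solid torus cross interval), so $P_r$ descends to a well-defined map $\C/\!\!\approx_r\,\to\,\C/\!\!\approx_r$. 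That this map is the identity is the content of Proposition~\ref{prop:satelliteimpliesshake1n}: since $\widetilde P$ is slice and $P$ has winding number one, $P_r(K)$ is $(1,n)$ $r$--shake concordant to $K$ for some $n\ge 1$, and in particular $P_r(K)\approx_r K$, so $[P_r(K)]=[K]$ in $\C/\!\!\approx_r$.

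Once the map is shown to be the identity on the quotient, both injectivity and surjectivity are immediate, but I would still spell out why injectivity at the level of $\approx_r$-classes is exactly what Corollary~\ref{cor:bettersatelliteimpliesshake} gives, since that is the substantive point. Suppose $P_r(K)\approx_r P_r(J)$. Unwinding the definition of the equivalence relation $\approx_r$ generated by $r$--shake concordance, there is a finite chain $P_r(K) = L_0, L_1, \dots, L_t = P_r(J)$ in which consecutive terms are $r$--shake concordant. I would like to conclude $K\approx_r J$, but one cannot apply Corollary~\ref{cor:bettersatelliteimpliesshake} directly to each link of the chain because the intermediate knots $L_i$ need not themselves be satellites with pattern $P$. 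The clean way around this is to work entirely on the quotient: since $P_r$ is a well-defined \emph{function} on $\C/\!\!\approx_r$ equal to the identity, $[P_r(K)]=[K]$ and $[P_r(J)]=[J]$, so $[P_r(K)]=[P_r(J)]$ forces $[K]=[J]$, i.e.\ $K\approx_r J$. Thus the induced map is injective, and since it is the identity it is a bijection.

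The main obstacle, and the place to be careful, is precisely the one just flagged: \emph{transitivity}. The relation of $r$--shake concordance is reflexive and symmetric but not known to be transitive, which is why the statement is phrased in terms of the \emph{generated} equivalence relation $\approx_r$ and why Corollary~\ref{cor:bettersatelliteimpliesshake} alone does not immediately yield a statement about $\approx_r$-classes without first passing to well-definedness of $P_r$ on the quotient. So the logically correct order is: (1) verify $P_r$ preserves concordance, hence preserves $\approx_r$-equivalence, giving a well-defined map on $\C/\!\!\approx_r$; (2) invoke Proposition~\ref{prop:satelliteimpliesshake1n} to see this map sends $[K]$ to $[K]$, i.e.\ it is the identity; (3) conclude it is a bijection. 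Corollary~\ref{cor:bettersatelliteimpliesshake} is then recovered as a consequence (injectivity of the identity map), rather than being the engine of the proof. I would present the argument in this streamlined form, remarking that it is Proposition~\ref{prop:satelliteimpliesshake1n} that does the real work.
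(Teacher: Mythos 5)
Your proposal is correct and takes essentially the same route as the paper, whose entire proof is the observation that Proposition~\ref{prop:satelliteimpliesshake1n} gives $P_r(K)\approx_r K$ for every $K$, so that $K\approx_r J$ implies $P_r(K)\approx_r K\approx_r J\approx_r P_r(J)$, making the induced map on $\C/\!\approx_r$ simultaneously well defined and equal to the identity (hence a bijection). One small caveat: your step (1), ``$P_r$ preserves concordance, hence preserves $\approx_r$-equivalence,'' is not a valid inference on its own (a chain realizing $K\approx_r J$ consists of $r$--shake concordances, not concordances) and is also unnecessary, since well-definedness already follows from $[P_r(K)]=[K]$ for all $K$, exactly as you note when you say Proposition~\ref{prop:satelliteimpliesshake1n} does the real work.
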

\begin{proof} Suppose $K\approx_r J$. By Proposition~\ref{prop:satelliteimpliesshake1n}, $P_r(K)\approx_r K$ and $P_r(J)\approx_r J$. Thus $P_r$ is well-defined and is the identity function.
\end{proof}

This is rather interesting since A.\ Levine has shown that the Mazur satellite operator, $P:\C\to \C$, whose pattern is shown in Figure~\ref{fig:P}, is far from surjective ~\cite{Lev14}.

\begin{prop}\label{prop:1mshakeimpliessatellite} Suppose $J$ is $(1,m)$ $r$--shake concordant to $K$ for some $m\geq 1$. Then $J$ is concordant to $P_r(K)$ for some winding number one pattern $P$ where $\widetilde{P}$ is a ribbon knot.\end{prop}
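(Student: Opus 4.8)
The plan is to reverse-engineer a winding number one pattern $P$ directly from the given $(1,m)$ $r$--shake concordance surface $F$, turning the surface into a slice disk for $\widetilde P$ while simultaneously recording how it meets the meridian of a solid torus around $K$. First I would set up the ambient picture: the $(1,m)$ $r$--shake concordance $F \hookrightarrow S^3 \times [0,1]$ has $F \cap S^3 \times \{0\} = J$ (a single component) and $F \cap S^3 \times \{1\}$ an $m$-component $r$--shaking of $K$, i.e.\ $2k+1$ $r$--framed parallels of $K$ with algebraically one total. Let $\eta$ be a meridian of $K$ in $S^3 \times \{1\}$ bounding a disk $D$ that meets the $r$--shaking transversely in $2k+1$ points (algebraically one), so $ST = S^3 - N(\eta)$ is the solid torus whose core is $K$ and $P_r(K)$ will be the resulting $r$--twisted satellite. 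The key idea, dual to the proof of Proposition~\ref{prop:satelliteimpliesshake1n}, is to push $\eta$ into the interior of $S^3 \times [0,1]$ along an annulus $A$ that is transverse to $F$, then use $A$ to tube together the $2k+1$ boundary circles of $F$ at the top into a single circle, producing a new surface which is a concordance from $J$ to some knot in $\partial N(\eta) \times \{1\}$.

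Concretely, the steps are: (1) Isotope $A = \eta \times [0,1]$ rel $\partial$ so that it is transverse to $F$ and, since the algebraic intersection number $A \cdot F = 1$ (this is the ``algebraically one'' condition on the $r$--shaking together with the fact that $F$ has no boundary components linking $\eta$ at the bottom end — $J$ is a knot disjoint from $\eta$), we may arrange $A \cap F$ to consist of some number of points plus arcs; after tubing along the framed normal bundle of $A$ we can surger $F$ to remove all but one intersection point with $A$, at the cost of adding genus, which we then must control. (2) Having reduced to $A \cdot F = 1$ geometrically, delete $N(A) \cap (S^3 \times [0,1])$ from the ambient manifold; what remains is $ST \times [0,1]$ (since removing a neighborhood of $\eta \times [0,1]$ from $S^3 \times [0,1]$ gives $(S^3 - N(\eta)) \times [0,1]$), and $F$ minus its intersection with $N(A)$, with the single intersection point turned into a tube, becomes a surface $\widehat F \subset ST \times [0,1]$ cobounded by $J$ at the bottom and by a single knot, call it $P$, sitting in $\partial(ST \times \{1\}) = \partial N(\eta) \times \{1\}$...

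wait — I need $P$ to live in the \emph{solid torus}, not its boundary. Let me redo this: the correct move is that $\widehat F \subset ST \times [0,1]$ has bottom boundary $J \subset ST \times \{0\}$ (after we observe $J \subset ST$ since $\text{lk}(J,\eta)$... actually $\eta$ is a meridian of $K$ at the \emph{top}, not linking $J$, but we can still regard $S^3 \times \{0\} - N(\eta \times \{0\})$ as containing $J$ via the product structure) and top boundary a single circle $C \subset ST \times \{1\}$ coming from tubing the parallels of $K$ together; this circle $C$, viewed inside the solid torus $ST$, is a winding number one knot — call the pattern it defines $P$ (or $P_r$ depending on framing bookkeeping). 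Then $\widehat F$ is a concordance in $ST \times [0,1]$, which upon re-embedding $ST$ standardly exhibits $\widetilde P$ as a knot that, together with $J$, cobounds a surface; pushing $J$ to the top and capping appropriately, and using that $\widehat F$ has genus zero (here is where controlling the genus in step (1) is essential — if surgery added genus, we only get $\widetilde P$ \emph{ribbon} rather than recovering more), we conclude $\widetilde P$ is a ribbon knot and $J$ is concordant to $P_r(K)$ after regluing the solid torus $ST$ around $K$.

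The main obstacle, and the reason the conclusion is only ``$\widetilde P$ ribbon'' rather than ``$\widetilde P$ slice,'' is controlling the topology in step (1): reducing $|A \cap F|$ from $2k+1$ geometric points down to $1$ geometric point via tubing along $A$ generically creates genus in the ``cap'' that will close $P$ off to $\widetilde P$, and this genus cannot in general be removed — but because the cap is built from pieces of $A$ (an annulus) tubed to $F$ (genus zero) along embedded arcs, the resulting closed-up surface for $\widetilde P$ is a ribbon surface (it has a handle decomposition with no $2$-handles relative to the obvious Morse function, or equivalently is obtained from disjoint disks by fusion), giving ribbon-ness. I would verify carefully that the framing bookkeeping is consistent: the $r$--framing on the parallels of $K$ is exactly what makes the tubed-up surface close up to a satellite with the correct twisting parameter $r$, matching $P_r(K)$. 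I would also double-check the orientation/``algebraically one'' accounting to confirm the winding number of $P$ in $ST$ is $+1$ and not some other value, which is forced by the algebraically-one condition on the shaking. Finally, I'd note that this proposition is the converse direction needed (together with Corollaries~\ref{cor:satelliteimpliesshake} and~\ref{cor:bettersatelliteimpliesshake}) to assemble the full characterization Theorem~\ref{thm:main}.
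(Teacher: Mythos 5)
Your construction does not produce a concordance, and that is the heart of the matter. Whether you phrase it as surgering $F$ along tubes following the annulus $A$ to cut the geometric intersection down to one point, or as tubing the $2k+1$ top boundary circles of $F$ into a single circle $C\subset ST\times\{1\}$, each tube/band joins two boundary circles (or cancels a pair of intersection points) of the already connected genus zero surface $F$, and each such band raises the genus by one. What you end up with is therefore a positive--genus cobordism in $ST\times[0,1]$ from $J$ to some winding number one satellite of $K$, not an annulus, so you cannot conclude ``$J$ is concordant to $P_r(K)$.'' This is not a technicality that careful bookkeeping will fix: if the genus created by the tubing could be removed within your framework, you could just as well choose the most naive bands (short bands joining adjacent, oppositely oriented $r$--framed parallels inside $N(K)$), in which case the fused top curve is isotopic to $K$ itself and you would conclude that every $(1,m)$ $r$--shake concordance yields an honest concordance from $J$ to $K$ --- contradicting Theorem~\ref{thm:examples}. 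Relatedly, your assertion that the closed-up surface for $\widetilde{P}$ is ``a ribbon surface\dots obtained from disjoint disks by fusion'' is stated rather than derived; nothing in the tubing construction exhibits a ribbon disk for $\widetilde{P}$, and in the correct argument the ribbonness has a quite different source. (A smaller point: two surfaces in a $4$--manifold meet generically in points, and the algebraic count $A\cdot F$ also involves the linking of $J$ with $\eta\times\{0\}$, which is not automatically zero; but these are secondary to the genus problem.)

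The paper's proof avoids the genus issue entirely by working level-wise instead of dually to Proposition~\ref{prop:satelliteimpliesshake1n}. One isotopes $F$ so that, reading from the shaking end, the minima, join saddles, split saddles, and maxima occur in that order; genus zero then forces the middle level to be a single knot $J'$, and the half of $F$ above the middle level is an honest concordance from $J$ to $J'$. The middle level $J'$ is visibly a fusion of the $r$--shaking of $K$ with a trivial link $T$ (coming from the minima), with the fusion bands arranged to miss the $r$--twist region. A string--link concordance supported in a ball around the twist region then replaces the $r$--twisted trivial strands by $r$--framed parallels of an arc whose closure is the shaking of the slice knot $-K\# K$; hence $J'$ is concordant to a knot $J''$ which is literally $P_r(K)$, where the pattern $P$ is the fusion of the $0$--shaking of $K\#-K$ (a ribbon link) with $T$ along the same bands --- a fusion of a ribbon link, so $\widetilde{P}$ is ribbon. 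In other words, the band/saddle data of $F$ is absorbed into the pattern $P$ rather than into extra genus of the cobordism; that mechanism is exactly what your proposal is missing.
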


\begin{figure}[t]
\centering
\includegraphics[width=5in]{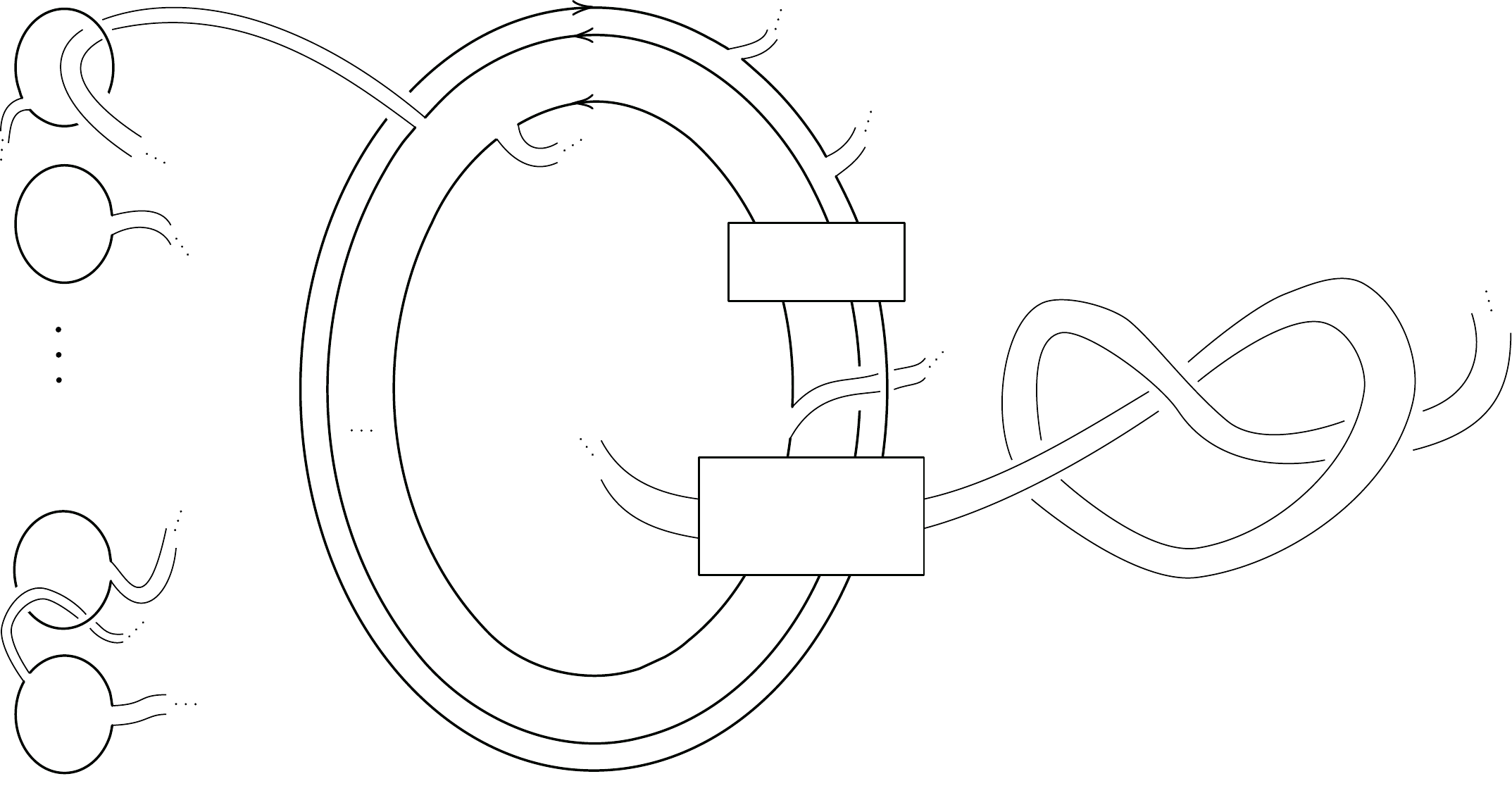}
\put(-2.425,0.85){\Large $K$}
\put(-2.35,1.725){$r$}
\caption{The knot $J'$ shown as a fusion of an $r$--shaking of $K$ and a trivial link $T$ (shown on the left hand side of the picture). The fusions bands are drawn in a lower weight for clarity.}\label{fig:fusion1}
\end{figure}

\begin{proof} Let $F$ be the genus zero surface in $S^3\times [0,1]$ whose boundary is $J\hookrightarrow S^3\times \{1\}$ and an $r$--shaking of $K\hookrightarrow S^3\times \{0\}$. After isotoping $F$ we can assume that the projection map $S^3\times [0,1]\rightarrow [0,1]$ is a Morse function when restricted to $F$ such that all the local maxima occur at level $\{4/5\}$, the split saddles at level $\{3/5\}$, the join saddles at level $\{2/5\}$, and the local minima at level $\{1/5\}$. As a result, the level $\{1/2\}$ is connected, i.e., equals some knot $J'\hookrightarrow S^3\times \{1/2\}$. Hence $J$ is concordant to $J'$. In addition, $J'$ is a fusion of the disjoint union of the $r$--shaking of $K$ (let $m$ be the number of components of the $r$--shaking)  and a trivial link $T$ corresponding to the local minima of $F$. Recall that a fusion of a link $L$ is a link obtained from $L$ by attaching bands that always decrease the number of components. See Figure \ref{fig:fusion1} for a schematic picture of $J'$. Notice that by an isotopy we can ensure that the fusion bands miss the $r$ full twists, i.e.\ the fusion bands do not interact with the box containing $r$. The fusion bands entering the box containing $K$ from the sides indicate that the bands interact with the strands tied into the knot $K$, but need not be tied into the knot $K$ themselves. To complete the proof, it only remains to show that $J'$ is concordant to some $P_r(K)$ as claimed.

\begin{figure}[t]
\centering
\includegraphics[width=5.5in]{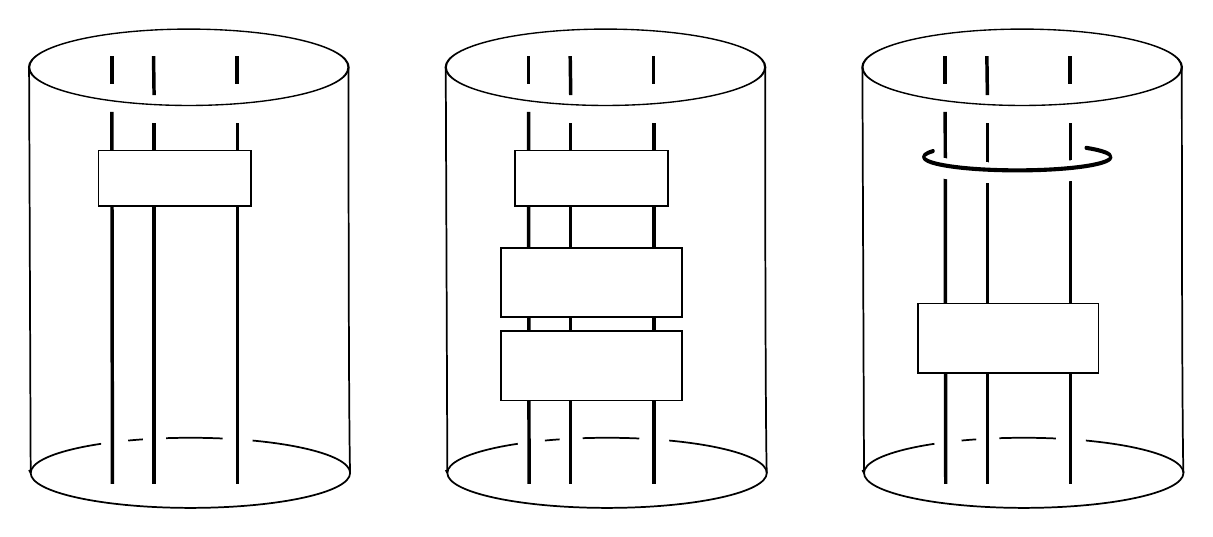}
\put(-5.2,1){$S$}
\put(-4.725,1.6){$r$}
\put(-4.7,1){$\cdots$}
\put(-4.725,-0.1){$L$}
\put(-2.85,1.6){$r$}
\put(-2.85,1.375){$\cdots$}
\put(-2.875,1.1){$K$}
\put(-2.95,0.725){$-K$}
\put(-2.85,-0.1){$L'$}
\put(-0.4,1.7){$\eta$}
\put(-0.95,1.375){$\cdots$}
\put(-1.1,0.85){$-K$}
\put(-0.95,-0.1){$L''$}
\caption{}\label{fig:stringlinkconcordance}
\end{figure}

Let $L$ denote the $m$--component $r$--shaking of $K\hookrightarrow S^3\times\{0\}$. Choose an embedded $B=D^2\times [0,1]$ which intersects $L$ in $m$ trivial strands with $r$ full twists (call this string link $S$---as shown on the left-most image in Figure~\ref{fig:stringlinkconcordance}) and is disjoint from $T$ and the fusion bands; we can do so easily since the fusion bands do not interact with the box containing $r$ in Figure \ref{fig:fusion1}. The string link $S$ is concordant, as a string link, to $m$ $r$--framed parallel copies of an arc in $B$ whose closure has the knot type of the $m$--component $r$--shaking of the slice knot $-K\# K$ (see the center image in Figure~\ref{fig:stringlinkconcordance}). Thus $L$ is concordant to $L'$, the $m$--component $r$--shaking of the knot $K\#-K\#K$. Since the fusion bands are exterior to $B$, it follows that $J'$ is concordant to $J''$, which is a fusion of $L'$ and the trivial link $T$ using the same fusion bands as in $J$. We show below that $J''$ is isotopic to some $P_r(K)$, which will complete the proof.

Let $L''$ be the $m$--component 0--shaking of $K\#-K$. We think of $L''$ as obtained from $L$  by replacing the parallels of $L$ within $B$ by a string link corresponding to $-K$, and removing the $r$ full twists (see the right-most image in Figure~\ref{fig:stringlinkconcordance}). Since $L''$ is a 0--shaking of a ribbon knot, it is a ribbon link. Let $\eta=\partial D^2\times \{1\}$ within $B=D^2\times[0,\,1]$. Then the exterior of $\eta$ in $S^3$ is an unknotted solid torus $ST$ containing $L''$, $T$, as well as the fusion bands. Let $P$ denote the knot in $ST$ obtained as this fusion of $L''$ and $T$. This is a pattern of winding number one. Then note that $J''=P_r(K)$. Moreover, since $\widetilde{P}$ is a fusion of the ribbon link $L''\cup T$, it is a ribbon knot. \end{proof}

\begin{cor}\label{cor:shakeimpliessatellite}If a knot $J$ is $r$--shake concordant to a knot $K$, then there exist winding number one patterns $P$ and $Q$, with $\widetilde{P}$ and $\widetilde{Q}$ ribbon, such that $P_r(K)$ is concordant to $Q_r(J)$ \end{cor}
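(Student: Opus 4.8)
The plan is to deduce Corollary~\ref{cor:shakeimpliessatellite} directly from Proposition~\ref{prop:1mshakeimpliessatellite} by symmetrizing in $K$ and $J$. Suppose $J$ is $r$--shake concordant to $K$; then by definition there is an $r$--shake concordance $F$ between them, say with a $p$--component $r$--shaking of $K$ on one end and a $q$--component $r$--shaking of $J$ on the other, i.e.\ $K$ is $(p,q)$ $r$--shake concordant to $J$. The difficulty is that Proposition~\ref{prop:1mshakeimpliessatellite} is stated only for $(1,m)$ $r$--shake concordances, not for a general $(p,q)$ one, so the first step is to reduce to that case on each side separately.

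First I would handle the $J$ side. By Proposition~\ref{prop:satelliteimpliesshake1n} applied with a trivial (identity) winding number one pattern—or more simply by taking $(p$, algebraically one$)$ parallel copies as in Remark~\ref{rem:parallelcopies}—I can pre-compose and post-compose $F$ with shake concordances that adjust the number of boundary components without changing the knots involved, so that it suffices to treat the situation where $K$ is $(1,q)$ $r$--shake concordant to $J$ for some $q\geq 1$; concretely, $K$ is always $(1,1)$ $r$--shake concordant to itself, and gluing as in Proposition~\ref{prop:easytransitivity} lets me arrange the left end to be a single component. Then Proposition~\ref{prop:1mshakeimpliessatellite}, read with the roles of $K_0$ and $K_1$ as there, produces a winding number one pattern $Q$ with $\widetilde{Q}$ ribbon such that $J$ is concordant to $Q_r(K)$—wait: I must be careful about \emph{which} knot plays the role of the base of the satellite. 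Proposition~\ref{prop:1mshakeimpliessatellite} says: if $J$ is $(1,m)$ $r$--shake concordant to $K$, then $J$ is concordant to $P_r(K)$. So reading the hypothesis once with $J=J$, $K=K$ I get a pattern $P$ with $\widetilde P$ ribbon and $P_r(K)$ concordant to $J$.

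Now I symmetrize: since $r$--shake concordance is symmetric (Remark after the definitions: $K_0$ is $(p,q)$ $r$--shake concordant to $K_1$ iff $K_1$ is $(q,p)$ to $K_0$), $K$ is also $r$--shake concordant to $J$, and after the same $(1,m)$-reduction Proposition~\ref{prop:1mshakeimpliessatellite} applied with the roles reversed yields a winding number one pattern $Q$ with $\widetilde Q$ ribbon such that $Q_r(J)$ is concordant to $K$. Chaining the two concordances: $P_r(K)$ is concordant to $J$, and $Q_r(J)$ is concordant to $K$; these are not yet in the symmetric form the Corollary wants, so the last step is to re-run Proposition~\ref{prop:1mshakeimpliessatellite} only \emph{once} but extract both patterns from the single shake concordance $F$. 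Concretely, I view $F$ with $K$ at one end and $J$ at the other; by the Morse-theoretic argument in Proposition~\ref{prop:1mshakeimpliessatellite}, $F$ contains a connected middle level $J'$ which is simultaneously a fusion of an $r$--shaking of $K$ (looking down from $\{1/2\}$) and a fusion of an $r$--shaking of $J$ (looking up); applying the string-link-concordance argument of that proof \emph{downward} expresses $J'$ as concordant to $P_r(K)$ with $\widetilde P$ ribbon, and applying it \emph{upward} expresses $J'$ as concordant to $Q_r(J)$ with $\widetilde Q$ ribbon. Since concordance is transitive, $P_r(K)$ is concordant to $J'$ is concordant to $Q_r(J)$, giving exactly the statement.

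The main obstacle is purely bookkeeping: making sure the orientation conventions in the definition of $r$--shake concordance (the end over $J$ is really a $(-r)$--shaking of $-J$) are tracked correctly when I apply Proposition~\ref{prop:1mshakeimpliessatellite} ``upward,'' and confirming that the middle-level knot $J'$ of $F$ can genuinely be used as the common base for \emph{both} one-sided arguments—i.e.\ that the split saddles / join saddles can be arranged so that $\{1/2\}$ is connected and lies below all of $F$'s interaction with the $r$--twist region on \emph{both} sides at once. This is exactly the normalization already carried out inside the proof of Proposition~\ref{prop:1mshakeimpliessatellite}, so no genuinely new idea is needed; the corollary is a two-sided packaging of that proposition.
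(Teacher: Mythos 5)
Your final argument is exactly the paper's proof: arrange the Morse function on the shake concordance $F$ so that the middle level is a connected knot $J'$, observe that $J'$ is then $(1,m)$ $r$--shake concordant to $K$ and $(1,n)$ $r$--shake concordant to $J$, apply Proposition~\ref{prop:1mshakeimpliessatellite} to each half, and conclude by transitivity of concordance. The earlier one-sided attempts in your write-up are correctly discarded, so the proposal is correct and takes essentially the same route as the paper.
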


\begin{proof}Suppose $K$ is $r$--shake concordant to $J$ via an $(m,n)$ shake concordance $F$. By isotoping $F$ we can assume that the projection map $S^3\times [0,1]\rightarrow [0,1]$ is a Morse function when restricted to $F$ such that $F\cap\{1/2\}$ is connected. Call this knot $K'$. We see then that $K'$ is $(1,m)$ $r$--shake oncordant to $K$ and $(1,n)$ $r$--shake concordant to $J$. The proof is completed by applying the preceding proposition. \end{proof}

\begin{thm}\label{thm:main} Two knots $K$ and $J$ are $r$--shake concordant if and only if there exist winding number one patterns $P$ and $Q$, with $\widetilde{P}$ and $\widetilde{Q}$ ribbon, such that $P_r(K)$ is concordant to $Q_r(J)$. \end{thm}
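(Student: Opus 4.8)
The plan is to observe that both implications have, in essence, already been established in the preceding sequence of corollaries, so the work is in assembling them and reconciling the ``ribbon'' versus ``slice'' hypotheses.

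For the reverse direction, suppose we are given winding number one patterns $P$ and $Q$ with $\widetilde{P}$ and $\widetilde{Q}$ ribbon knots, such that $P_r(K)$ is concordant to $Q_r(J)$. Every ribbon knot is slice, so $\widetilde{P}$ and $\widetilde{Q}$ are in particular slice, and the hypotheses of Corollary~\ref{cor:satelliteimpliesshake} are satisfied verbatim. That corollary then yields that $K$ is $r$--shake concordant to $J$, which is exactly what we want. (One could equally route this through Proposition~\ref{prop:satelliteimpliesshake1n} directly, building an $(m,1)$ $r$--shake concordance from $K$ to $P_r(K)$, a $(1,n)$ one from $Q_r(J)$ to $J$, and splicing these to the given concordance via Proposition~\ref{prop:easytransitivity}, but invoking the corollary is cleaner.)

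For the forward direction, suppose $K$ and $J$ are $r$--shake concordant. This is precisely the hypothesis of Corollary~\ref{cor:shakeimpliessatellite}, whose conclusion asserts the existence of winding number one patterns $P$ and $Q$ with $\widetilde{P}$ and $\widetilde{Q}$ ribbon such that $P_r(K)$ is concordant to $Q_r(J)$. So nothing further is needed here beyond citing that corollary, which in turn rested on reducing (via a Morse function on the shake concordance surface) to the $(1,m)$ and $(1,n)$ cases handled by Proposition~\ref{prop:1mshakeimpliessatellite}.

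I do not expect a genuine obstacle: the theorem is a repackaging of Corollaries~\ref{cor:satelliteimpliesshake} and~\ref{cor:shakeimpliessatellite}. The only point requiring a word of care is that the ``if'' direction was previously proved under the slightly weaker hypothesis that $\widetilde{P}, \widetilde{Q}$ are slice, while here they are assumed ribbon; since ribbon implies slice this is harmless, and conversely the ``only if'' direction delivers the stronger ribbon conclusion, so the two directions match up to give an honest biconditional with the ribbon hypothesis on both sides. Thus the proof is simply: ``The forward implication is Corollary~\ref{cor:shakeimpliessatellite}. For the converse, note that ribbon knots are slice and apply Corollary~\ref{cor:satelliteimpliesshake}.''
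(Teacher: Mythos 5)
Your proposal is correct and matches the paper's own proof, which simply cites Corollaries~\ref{cor:satelliteimpliesshake} and~\ref{cor:shakeimpliessatellite}; your added remark that ribbon implies slice (so the reverse direction's hypotheses are met) is the only reconciliation needed and is implicit in the paper.
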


\begin{proof}This follows directly from Corollaries \ref{cor:satelliteimpliesshake} and \ref{cor:shakeimpliessatellite}.\end{proof}

Recall that $\approx_r$ denotes the equivalence relation generated by $r$--shake concordance of knots. 

\begin{cor}\label{cor:sameequivrelation}  For any integer $r$, the equivalence relation on the set of isotopy classes of knots generated by $r$--shake concordance is the same as that generated by concordance together with the relation $K\sim P_r(K)$ for all $K$ and all winding number one patterns $P$ with $\widetilde{P}$ a ribbon knot.
\end{cor}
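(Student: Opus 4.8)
The plan is to deduce Corollary~\ref{cor:sameequivrelation} almost immediately from Theorem~\ref{thm:main}, by proving containment of the two equivalence relations in both directions. Write $\approx_r$ for the equivalence relation generated by $r$--shake concordance, and write $\sim_r$ for the equivalence relation generated by concordance together with the moves $K\sim P_r(K)$, ranging over all winding number one patterns $P$ with $\widetilde P$ ribbon. We want $\approx_r\ =\ \sim_r$.

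For the inclusion $\sim_r\ \subseteq\ \approx_r$: the generating moves of $\sim_r$ are both already $r$--shake concordances (after passing to the generated equivalence relation). Indeed, concordant knots are $r$--shake concordant (cap off the annulus, or observe a concordance is literally a $(1,1)$ $r$--shake concordance, giving $K\approx_r J$). And for each winding number one pattern $P$ with $\widetilde P$ ribbon (hence slice), Proposition~\ref{prop:satelliteimpliesshake1n} gives an $(1,n)$ $r$--shake concordance from $P_r(K)$ to $K$, so $P_r(K)\approx_r K$. Since $\approx_r$ is an equivalence relation containing all the generators of $\sim_r$, it contains $\sim_r$.

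For the reverse inclusion $\approx_r\ \subseteq\ \sim_r$: it suffices to show that whenever $K$ is $r$--shake concordant to $J$, we have $K\sim_r J$; the result then follows since $\sim_r$ is transitive. So suppose $K$ is $r$--shake concordant to $J$. By Theorem~\ref{thm:main} there exist winding number one patterns $P$ and $Q$, with $\widetilde P$ and $\widetilde Q$ ribbon, such that $P_r(K)$ is concordant to $Q_r(J)$. Then, reading off the chain of $\sim_r$--moves: $K\sim_r P_r(K)$ (a generating satellite move), $P_r(K)\sim_r Q_r(J)$ (they are concordant), and $Q_r(J)\sim_r J$ (a generating satellite move, using symmetry of the equivalence relation). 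Chaining these, $K\sim_r J$, as desired.

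There is essentially no main obstacle here — the content is entirely in Theorem~\ref{thm:main} and Proposition~\ref{prop:satelliteimpliesshake1n}, which we are free to invoke. The only point requiring a moment's care is the bookkeeping distinction that $r$--shake concordance itself is not transitive (as emphasized after Remark~\ref{rem:parallelcopies}), so one must phrase everything in terms of the \emph{generated} equivalence relations $\approx_r$ and $\sim_r$ and be sure that each elementary step used above is genuinely one of the generators (or its inverse), rather than tacitly composing non-transitive relations. Once that is respected, the two inclusions above complete the proof.
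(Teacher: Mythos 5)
Your proposal is correct and follows essentially the same route as the paper: both directions reduce to Theorem~\ref{thm:main} (with Proposition~\ref{prop:satelliteimpliesshake1n} supplying the satellite-move generators), with the only cosmetic difference being that the paper writes out the chain of knots $K_0, P_r^{(0)}(K_0), Q_r^{(0)}(K_1),\dots$ explicitly while you phrase the same argument as containment of generated equivalence relations. Your care about the non-transitivity of $r$--shake concordance matches the paper's handling of this point.
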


\begin{proof} Suppose $K \approx_r  J$. Then, since $r$--shake concordance is reflexive and symmetric, there is a sequence of knot types $K=K_0, K_1, \dots , K_n=J$ such that, for each $i$,  $K_i$ is $r$--shake concordant to $K_{i+1}$. Then by the forward direction of Theorem~\ref{thm:main}, for each $i$, there exist winding number one patterns $P^{(i)}$ and $Q^{(i)}$, with $\widetilde{P}^{(i)}$ and $\widetilde{Q}^{(i)}$ ribbon, such that $P_r^{(i)}(K_i)$ is concordant to $Q_r^{(i)}(K_{i+1})$. Now consider the sequence of knot types: $K_0, P_r^{(0)}(K_0), Q^{(0)}_r(K_1), K_1, P_r^{(1)}(K_1),Q^{(1)}_r(K_2),\dots ,Q^{(n-1)}_r(K_n),K_n$. In this sequence, for each $j$, one of three things holds: the $j^{th}$ knot is concordant to the $(j+1)^{th}$ knot,  the $j^{th}$ knot is a winding number one satellite of the $(j+1)^{th}$ knot, or the $(j+1)^{th}$ knot is a winding number one satellite of the $j^{th}$ knot (with $\widetilde{P}$ a ribbon knot for all). This implies that $K$ is related to $J$ in the equivalence relation generated by concordance together with the relation $K\sim P_r(K)$ as stated.

The converse is proved by essentially reversing this argument, using the fact that concordant knots are $r$--shake concordant and the backward direction of Theorem~\ref{thm:main}.
\end{proof}

\begin{cor} \label{cor:characterizeshakeslice} A knot $K$ is $r$--shake slice if and only if there exists a winding number one pattern $P$, with $\widetilde{P}$ ribbon, such that $P_r(K)$ is slice.\end{cor}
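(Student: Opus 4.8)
The plan is to deduce Corollary~\ref{cor:characterizeshakeslice} from Theorem~\ref{thm:main} by reducing $r$--shake sliceness to $r$--shake concordance to the unknot. First I would recall Proposition~\ref{prop:shakesliceandconcordant}, which tells us that $K$ is $r$--shake slice if and only if $K$ is $(m,1)$ $r$--shake concordant to the unknot $U$ for some $m$; in particular $K$ being $r$--shake slice is equivalent to $K$ being $r$--shake concordant to $U$.

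For the forward direction, suppose $K$ is $r$--shake slice. Then $K$ is $r$--shake concordant to $U$, so Theorem~\ref{thm:main} (more precisely Corollary~\ref{cor:shakeimpliessatellite}) produces winding number one patterns $P$ and $Q$, with $\widetilde{P}$ and $\widetilde{Q}$ ribbon, such that $P_r(K)$ is concordant to $Q_r(U)$. The key simplification here is that $Q_r(U)$ is just the knot $\widetilde{Q}=Q(U)$ after performing the $r$--twisting on an unknotted companion — but twisting along the unknot does nothing, so $Q_r(U)=\widetilde{Q}$, which is a ribbon knot and hence slice. Therefore $P_r(K)$ is concordant to a slice knot, so $P_r(K)$ is slice, as desired.

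For the converse, suppose there is a winding number one pattern $P$ with $\widetilde{P}$ ribbon such that $P_r(K)$ is slice. Since slice knots are concordant to the unknot $U$, and the unknot can be written as $Q_r(U)$ for the trivial pattern $Q$ (the core of the solid torus, for which $\widetilde{Q}=U$ is ribbon), we have that $P_r(K)$ is concordant to $Q_r(U)$ with both $\widetilde{P}$ and $\widetilde{Q}$ ribbon. By Theorem~\ref{thm:main} (the direction supplied by Corollary~\ref{cor:satelliteimpliesshake}), $K$ is $r$--shake concordant to $U$, and hence $r$--shake slice by Proposition~\ref{prop:shakesliceandconcordant}.

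I do not expect a genuine obstacle here: the corollary is essentially a bookkeeping consequence of the characterization theorem together with Proposition~\ref{prop:shakesliceandconcordant}. The only point requiring a line of care is the observation that $Q_r(U)=\widetilde{Q}$ — that $r$--twisted satellites along the unknot collapse to the pattern knot in $S^3$ — which follows immediately from the definition of the twisted satellite construction since the $r$--framed solid torus neighborhood of the unknot is standard.
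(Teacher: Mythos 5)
Your forward direction contains a genuine error: the identity $Q_r(U)=\widetilde{Q}$ is false for $r\neq 0$. In the $r$--twisted satellite construction the integer $r$ records the framing used to glue the pattern torus into a neighborhood of the companion, not any knotting of the companion; consequently $Q_r(U)$ is the knot obtained from $Q$ by inserting $r$ full twists into the strands meeting a meridian disk of the solid torus (i.e.\ $\widetilde{Q_r}$), which is in general a different knot from $\widetilde{Q}$ and need not be slice even when $\widetilde{Q}$ is ribbon. The paper's Section~\ref{sec:oldexamples} makes this vivid: the patterns $R^{(m)}$ of Figure~\ref{fig:prevexamplepattern} satisfy $\widetilde{R^{(m)}}=K_{0,\,m}$, a ribbon knot, yet $R^{(m)}_r(U)=K_{r,\,m}$ is \emph{not} slice for $r\neq 0$ --- that is exactly why the $K_{r,\,m}$ are interesting $r$--shake slice examples. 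So from ``$P_r(K)$ is concordant to $Q_r(U)$'' you cannot conclude that $P_r(K)$ is slice, and your argument as written establishes the forward direction only for $r=0$.

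The repair is to keep the asymmetric information that Proposition~\ref{prop:shakesliceandconcordant} provides, rather than passing through the symmetric Theorem~\ref{thm:main}: if $K$ is $r$--shake slice it is $(m,1)$ $r$--shake concordant to $U$, so $U$ is $(1,m)$ $r$--shake concordant to $K$, and Proposition~\ref{prop:1mshakeimpliessatellite} applied with $J=U$ says directly that $U$ is concordant to $P_r(K)$ for some winding number one $P$ with $\widetilde{P}$ ribbon, i.e.\ $P_r(K)$ is slice; no pattern on the unknot side ever appears. This is the paper's proof. Your converse is essentially correct, but note a small mismatch: Corollary~\ref{cor:satelliteimpliesshake}, used as a black box, yields only an $(m,n)$ $r$--shake concordance from $K$ to $U$, while Proposition~\ref{prop:shakesliceandconcordant} asks for $(m,1)$. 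This is easily fixed, either by observing that with $Q$ the trivial pattern the index $n$ is its geometric winding number, namely $1$, or more directly by taking the $(1,n)$ $r$--shake concordance from $P_r(K)$ to $K$ given by Proposition~\ref{prop:satelliteimpliesshake1n} and capping its $P_r(K)$ end with a slice disk for $P_r(K)$, which exhibits a genus zero surface bounded by an $r$--shaking of $K$.
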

\begin{proof}Recall from Proposition \ref{prop:shakesliceandconcordant} that a knot $K$ is $r$--shake slice if and only if it is $(m,\,1)$ $r$--shake concordant to the unknot for some $m$. But by Propositions \ref{prop:satelliteimpliesshake1n} and \ref{prop:1mshakeimpliessatellite}, $K$ being $(m,\,1)$ $r$--shake concordant to the unknot is equivalent to there existing a winding number one pattern $P$ with $\widetilde{P}$ ribbon such that $P_r(K)$ is slice.\end{proof}

In particular, this means that a knot is 0--shake slice if and only if there exists a winding number one pattern $P$, with $\widetilde{P}$ ribbon, such that $P(K)$ is slice. 
\begin{cor}\label{cor:characterizezero2} There exists a $0$--shake slice knot that is not a slice knot if and only if there exists some winding number one satellite operator $P:\C\to\C$, with $\widetilde{P}$ ribbon, which fails to be weakly injective, i.e. there exists a knot $K\neq 0$ (not slice) such that $P(K)=0$ (is slice).\end{cor}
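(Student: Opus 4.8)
The plan is to obtain this corollary almost immediately from Corollary~\ref{cor:characterizeshakeslice}, specialized to $r=0$. First I would recall that for $r=0$ the twisted satellite $P_0(K)$ is precisely the classical untwisted satellite $P(K)$, so that a winding number one pattern $P$ induces an honest operator $P=P_0\colon \C\to\C$ on the set of concordance classes; well-definedness of this map on $\C$ is the standard observation that a concordance from $K$ to $K'$ restricts, inside the image of the companion solid torus, to a concordance from $P(K)$ to $P(K')$.

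For the forward implication, I would start with a knot $K$ that is $0$--shake slice but not slice. Corollary~\ref{cor:characterizeshakeslice} (with $r=0$) produces a winding number one pattern $P$ with $\widetilde P$ ribbon such that $P_0(K)=P(K)$ is slice, i.e.\ $P([K])=0$ in $\C$. Since $[K]\neq 0$, the operator $P\colon\C\to\C$ carries a nonzero class to $0$, which is exactly the assertion that $P$ fails to be weakly injective.

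For the converse, I would begin with a winding number one satellite operator $P\colon\C\to\C$ with $\widetilde P$ ribbon that is not weakly injective; unwinding the definition, there is a knot $K$ with $[K]\neq 0$ (so $K$ is not slice) and $P([K])=0$ (so $P(K)=P_0(K)$ is slice). Applying the ``if'' direction of Corollary~\ref{cor:characterizeshakeslice} with this same pattern $P$ and $r=0$ shows that $K$ is $0$--shake slice, while $K$ is not slice by construction; hence $K$ is the desired example.

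The only point requiring genuine care — and the step I expect to demand the most attention — is bookkeeping around the definitions: matching the ``there exists a nonzero class killed by $P$'' formulation of weak non-injectivity with the ``there exists $K$ with $P(K)$ slice but $K$ not slice'' formulation, and checking that the pattern $P$ furnished by Corollary~\ref{cor:characterizeshakeslice} is exactly one defining such an operator, in particular that the hypothesis ``$\widetilde P$ ribbon'' transfers verbatim. There is essentially no new geometric content beyond Corollary~\ref{cor:characterizeshakeslice}.
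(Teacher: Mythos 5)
Your proposal is correct and matches the paper's (implicit) argument: the paper gives no separate proof because the corollary is exactly the $r=0$ case of Corollary~\ref{cor:characterizeshakeslice} translated into the language of satellite operators on $\C$, which is precisely what you do in both directions. The bookkeeping points you flag (well-definedness of $P\colon\C\to\C$ and the transfer of the ``$\widetilde{P}$ ribbon'' hypothesis) are handled the same way in the paper's setup, so nothing further is needed.
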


%===========================================================
\section{Shake concordant knots that are not concordant}\label{sec:examples}

The primary goal in this section is to prove Theorem \ref{thm:examples}, which we state below. The proof is postponed until the end of this section. 

\begin{thm}\label{thm:examples}For any integer $r$, there exist infinitely many knots which are distinct in smooth concordance but are pairwise $r$--shake concordant. For $r=0$, there exist topologically slice knots with this property as well. 

In addition, none of $\tau$, $\s$, or slice genus is invariant under $r$--shake concordance, for any integer $r$.\end{thm}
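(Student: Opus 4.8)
The plan is to leverage the characterization theorem (Theorem~\ref{thm:main}) together with known injectivity results for winding number one satellite operators to manufacture the infinite families. The key observation is that if $P$ is a winding number one pattern with $\widetilde{P}$ ribbon, then $K$ and $P_r(K)$ are automatically $r$--shake concordant by Corollary~\ref{cor:satelliteimpliesshake} (taking $Q$ trivial, $J=P_r(K)$). So the strategy is: fix a single well-chosen winding number one pattern $P$ with $\widetilde{P}$ ribbon --- the Mazur pattern of Figure~\ref{fig:P} is the natural candidate, being the simplest example --- and iterate the operator. For a suitable seed knot $K$, consider the sequence $K, P_r(K), P_r^{(2)}(K), \ldots$; these are all pairwise $r$--shake concordant (by transitivity of $\approx_r$, using Proposition~\ref{prop:easytransitivity} to handle the $(p,q)$ bookkeeping). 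The remaining task is to show that for a good choice of $K$, infinitely many of these iterates lie in distinct smooth concordance classes.

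For the distinctness in smooth concordance, I would invoke results about satellite operators on the smooth concordance group: specifically, that winding number one satellite operators like the Mazur operator (or iterated versions) are injective on $\C$ --- this is exactly the content of \cite{CDR14} cited in the excerpt, and Levine's work \cite{Lev14} shows the Mazur operator is far from surjective, so iterates need not collapse. Concretely, one wants a seed $K$ such that $\{P_r^{(n)}(K)\}_{n\geq 0}$ contains infinitely many distinct smooth concordance classes. The cleanest route is to use a concordance invariant that behaves predictably under the satellite operation: for instance, choose $K$ with $\tau(K)$ or $\s(K)$ nonzero, and show the relevant invariant of $P_r^{(n)}(K)$ grows (or at least is nonconstant) in $n$ --- satellite formulas for $\tau$ under winding number one patterns, or bordered Floer computations in the style of Levine and Hom, give the needed control. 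This simultaneously proves the second assertion of the theorem: since $\tau$, $\s$, and slice genus all distinguish infinitely many of the $P_r^{(n)}(K)$ while these knots are all $r$--shake concordant, none of these three invariants can be invariant under $r$--shake concordance.

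For the topologically slice refinement when $r=0$, I would start the construction from a topologically slice seed knot --- the standard device is to connect-sum with (or build from) the untwisted positive-clasped Whitehead double of a knot with nontrivial $\tau$, or more simply take $K$ to be a topologically slice knot with $\tau(K)=1$, such as $D_+(T_{2,3})$, the positively-clasped untwisted Whitehead double of the trefoil, which is topologically slice by Freedman's theorem and has $\tau=1$. Since $P_0$ with $\widetilde{P}$ ribbon fixes the topological concordance class up to the relevant equivalence (indeed, a winding number one satellite with $\widetilde{P}$ a ribbon knot preserves topological sliceness, because $P_0(K)$ is topologically concordant to $K$ when $P$ has winding number one and $\widetilde{P}$ is topologically slice --- by \cite{CDR14} or Freedman-style arguments), all iterates $P_0^{(n)}(K)$ remain topologically slice, while their smooth concordance classes are forced to be distinct by the $\tau$ (or $\s$) computation as above.

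The main obstacle I anticipate is the smooth-distinctness step: one must verify that iterating a winding number one operator with ribbon $\widetilde{P}$ genuinely produces infinitely many smooth concordance classes, rather than eventually cycling or stabilizing. This requires either an explicit satellite formula for a concordance invariant under the chosen pattern (and checking it is strictly monotonic, or takes infinitely many values, along the iteration) or an injectivity-type argument combined with a lower bound showing the image under iteration is infinite. The twisted case $r\neq 0$ adds the wrinkle that $P_r(K)$ involves $r$ extra full twists, so the satellite formula for $\tau$ (or $\s$) must be tracked in the $r$--twisted setting; handling this uniformly in $r$, rather than case-by-case, will take some care, though the underlying bordered-Floer or crossing-change estimates are standard.
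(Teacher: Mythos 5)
Your skeleton matches the paper's: iterate the Mazur operator on a well-chosen (topologically slice) seed, note the iterates are shake concordant because a winding number one satellite with $\widetilde{P}$ slice is shake concordant to its companion, and distinguish them smoothly by $\tau$, $\s$, $\g_4$. But the heart of the theorem is precisely the smooth-distinctness step for \emph{arbitrary} $r$, and your proposal does not supply it. The tools you point to do not deliver it: injectivity of winding number one operators from \cite{CDR14} is a statement about $\C^*$ (exotic concordance) and in any case injectivity does not preclude $P_r(K)$ being concordant to $K$ or the iterates stabilizing; Levine's non-surjectivity result is likewise not a growth statement. For $r=0$ one can quote the $\tau$-computation for Mazur satellites from \cite{CFHeHo13}, but for the $r$-twisted operator no off-the-shelf formula is cited or proved, and you explicitly defer this ("will take some care"). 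The paper's actual mechanism is different and self-contained: it defines \emph{$r$-suitable} knots (a Legendrian representative with $\tb(\K)=r$ and $\rot(\K)=2\g_4(K)-1-r$), uses the Legendrian satellite operation (which automatically produces the $\tb(\K)$-twisted satellite, so the $r$-twisting is built in uniformly) together with the slice--Bennequin inequality to show $\g_4$, $\tau$, $\s$ each increase by exactly $\g_4(P)=1$ under $P_r$, and closes the induction by showing $P_r(K)$ is again $r$-suitable (Lemma~\ref{lem:iteratesuitable}). Note also that for general $r$ the seed cannot merely have $\tau\neq 0$: it must be $r$-suitable, which for large $r$ forces a Legendrian representative with large $\tb$; the paper arranges this, and topological sliceness, by taking connected sums of $\Wh(RHT)$ (Remark~\ref{rem:rsuitableprops2}).

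Two smaller points. First, the theorem claims the knots are \emph{pairwise $r$-shake concordant}, not merely equivalent under the generated relation $\approx_r$; since shake concordance is not transitive, "transitivity of $\approx_r$" is not enough. You need the $(1,n)$ shake concordances furnished by Proposition~\ref{prop:satelliteimpliesshake1n} (not the $(m,n)$ ones of Corollary~\ref{cor:satelliteimpliesshake}) and must compose them via Proposition~\ref{prop:easytransitivity}, which is exactly what the paper does to get a $(1,n^{j-i})$ shake concordance between $P^j_r(K)$ and $P^i_r(K)$. Second, your justification for topological sliceness of the iterates --- that $P_0(K)$ is topologically concordant to $K$ whenever $P$ has winding number one and $\widetilde{P}$ is slice --- is not a known theorem and is not needed: the correct argument is that a topological concordance from $K$ to the unknot induces one from $P(K)$ to $P(U)=\widetilde{P}$, which for the Mazur pattern is unknotted, so topological sliceness propagates through the iteration.
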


We will use several tools from Legendrian knot theory; see~\cite{Etn05,GomStip99} for excellent introductions to this field. 

\begin{defn}\label{defn:rsuitable} For any integer $r$, a knot $K$ is said to be $r$--suitable if it has some Legendrian representative $\mathcal{K}$ such that 
$$\tb(\mathcal{K})= r \text{ and } \rot(\mathcal{K}) =2\g_4(K)-1-r.$$
\end{defn}

\begin{rem}\label{rem:rsuitableprops1}Any $r$--suitable knot is also $k$--suitable for any $k\leq r$, since positive stabilization of a Legendrian knot decreases Thurston--Bennequin number by one, increases rotation number by one, and preserves topological knot type.  

One can easily transform a diagram of a knot as a positive braid closure to a Legendrian front diagram for a Legendrian representative. The Bennequin inequality~\cite{Eli92}\cite[p.\ 19]{Etn05} then implies that any knot $K$ obtained as the closure of a positive braid has a Legendrian representative $\mathcal{K}$ with $\tb(\mathcal{K})=2\g(K)-1$ and $\rot(\mathcal{K})=0$. Moreover, by the slice--Bennequin inequality \cite{Rud95}\cite[p.\ 133]{Etn05}, $\tb(\mathcal{K})+\lvert \rot(\mathcal{K})\rvert \leq 2\g_4(K)-1$, and therefore, $\g(K)=\g_4(K)$. Therefore, any positive braid closure is $(2\g_4(K)-1)$--suitable. For example, the right-handed trefoil RHT, is 1--suitable and the unknot is $(-1)$--suitable. In fact, any positive knot is $(2\g_4(K)-1)$--suitable with $g(K)=g_4(K)$ by~\cite[Proposition~3.2]{LidSiv14}.\end{rem} 

\begin{rem}\label{rem:suitableequalities} Recall that we have `expanded' versions of the slice--Bennequin inequality, as follows. For any Legendrian representative $\mathcal{K}$ of a knot $K$,
\begin{equation}
\begin{aligned}\label{eqn:slicebennequinexpansions1}
\tb(\K)+\lvert\rot(\K)\rvert\leq 2\tau(K)&-1\leq 2\g_4(K)-1\\
\tb(\K)+\lvert\rot(\K)\rvert\leq \s(K)&-1\leq 2\g_4(K)-1.
\end{aligned}
\end{equation}
 using \cite[Corollary 1.1]{KronMrow13}\cite[Theorem 1.1]{OzSz03}\cite{Plam04, Shuma07}. By examining the above, we see that if a knot $K$ is $r$--suitable, 
\begin{equation}\label{eqn:tauequalsgenus}
2\tau(K)=\s(K)=2\g_4(K),
\end{equation} 
and
\begin{equation}\label{eqn:rnotbig}
r\leq 2\g_4(K)-1.
\end{equation} 
In particular, note that if $r\geq 0$, $K$ cannot be slice. \end{rem}

\begin{lem}\label{lem:whiteheadsuitable} If $K$ is an $r$--suitable knot with $r\geq 0$, then the untwisted positive Whitehead double of $K$, denoted $\Wh(K)$, is 1--suitable.\end{lem}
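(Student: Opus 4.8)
The plan is to produce an explicit Legendrian representative of $\Wh(K)$ with Thurston--Bennequin number $1$ and rotation number $2\g_4(\Wh(K))-1-1 = 2\g_4(\Wh(K))-2$, which means I need to know $\g_4(\Wh(K))$ as well. I expect $\g_4(\Wh(K))=1$, so I will be aiming for a Legendrian Whitehead double $\mathcal{W}$ with $\tb(\mathcal{W})=1$ and $\rot(\mathcal{W})=0$. First I would recall the standard ``Legendrian satellite'' construction: given a Legendrian representative $\mathcal{K}$ of $K$ with $\tb(\mathcal{K})=m$, one takes the Legendrian front of $\mathcal{K}$, thickens it to an annular front (a ``Legendrian push-off''), and inserts into this annulus the front of a Legendrian pattern sitting in the standard contact solid torus. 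Inserting a Legendrian pattern with pattern-framing $f$ into an $m$-framed annulus produces a Legendrian knot whose Thurston--Bennequin number is $f + m\cdot(\text{wind}(P))^2$ and whose rotation number adds appropriately; for the (winding number zero) Whitehead pattern this gives $\tb = \tb_{\mathrm{pattern}}$ independent of $m$, which is exactly why $r$-suitability of $K$ for \emph{any} $r\geq 0$ suffices.

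The key steps, in order: (1) Start with an $r$-suitable Legendrian representative $\mathcal{K}$ of $K$; by Remark~\ref{rem:rsuitableprops1} (stabilization) we may as well take the one with $\tb(\mathcal{K})=r\geq 0$ and $\rot(\mathcal{K})=2\g_4(K)-1-r$. (2) Form the Legendrian Whitehead double by inserting the standard Legendrian clasp pattern into the $r$-framed annular neighborhood of $\mathcal{K}$; since the Whitehead double is \emph{untwisted}, the $0$-framing of the pattern relative to the solid torus must match the $0$-framing of the companion, and one checks the insertion can be done so that the resulting front $\mathcal{W}$ has $\tb(\mathcal{W})=1$ (the positive clasp contributes a single positive crossing's worth beyond the two cancelling cusps of the band) and $\rot(\mathcal{W})=0$ (the two parallel strands of the double carry opposite rotation contributions which cancel, and the clasp contributes nothing). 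This is the computation I would present via the cusp/crossing formulas $\tb = \mathrm{writhe} - \tfrac12(\#\text{cusps})$ and $\rot = \tfrac12(\#\text{down cusps} - \#\text{up cusps})$ applied to the explicit front. (3) Separately establish $\g_4(\Wh(K))=1$: the upper bound $\g_4(\Wh(K))\leq 1$ is classical (a Whitehead double bounds an obvious genus-one Seifert surface, and in fact a genus-one surface in $B^4$); the lower bound $\g_4(\Wh(K))\geq 1$ follows because $\Wh(K)$ is not slice, which we get from the slice--Bennequin inequality~\eqref{eqn:slicebennequinexpansions1} applied to $\mathcal{W}$: $\tb(\mathcal{W})+|\rot(\mathcal{W})| = 1 \leq 2\g_4(\Wh(K))-1$, forcing $\g_4(\Wh(K))\geq 1$. (4) Conclude: $\tb(\mathcal{W})=1$ and $\rot(\mathcal{W})=0 = 2\cdot 1 - 1 - 1 = 2\g_4(\Wh(K))-1-1$, so $\Wh(K)$ is $1$-suitable by Definition~\ref{defn:rsuitable}.

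The main obstacle is step (2): carefully drawing the Legendrian front of the untwisted positive Whitehead double of a Legendrian knot with a given (nonnegative) Thurston--Bennequin framing and verifying that the framing arithmetic comes out so that $\tb = 1$ exactly — in particular confirming that the $r\geq 0$ hypothesis is precisely what is needed to realize the $0$-twisting without introducing extra cusps or crossings, and that the positivity of the clasp (rather than a negative clasp) is what makes $\tb$ land at $+1$ rather than some nonpositive value. It is worth noting that one needs $r\geq 0$ here: if $\tb(\mathcal{K})=r<0$ the naive insertion would require negative stabilizations to correct the framing, which would lower $\tb(\mathcal{W})$ below $1$ and also change the rotation number, so the lemma genuinely uses the hypothesis. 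Once the front is correct, steps (1), (3), (4) are short bookkeeping using the already-established inequalities in Remark~\ref{rem:suitableequalities}.
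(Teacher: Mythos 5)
Your proposal is correct, but the second half takes a genuinely different route from the paper. Both arguments begin the same way: produce a Legendrian representative of $\Wh(K)$ with $\tb=1$ and $\rot=0$, which is possible exactly because $K$ has a Legendrian representative with $\tb = r\geq 0$ (the paper simply cites Rudolph and Akbulut--Matveyev for this front, whereas you propose to redraw it and verify the cusp/crossing count by hand -- this is the standard picture, and you correctly flag that realizing the \emph{untwisted} double forces you to insert $r$ negative full twists into the clasp pattern, which is where $r\geq0$ is used; note that the bare Legendrian satellite of a $\tb=r$ representative gives the $r$--twisted double, so the ``independence of $m$'' you mention only holds after this untwisting step). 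Where you diverge is in pinning down $\g_4(\Wh(K))=1$: the paper shows $\Wh(K)$ is not slice by quoting Hedden's theorem that $\tau(\Wh(K))=1$ whenever $\tau(K)>0$, with $\tau(K)>0$ coming from $r$--suitability via~\eqref{eqn:tauequalsgenus}; you instead apply the slice--Bennequin inequality~\eqref{eqn:slicebennequinexpansions1} to your $\tb=1$, $\rot=0$ representative of $\Wh(K)$ itself, getting $1\leq 2\g_4(\Wh(K))-1$ directly. Your route is more elementary (no Heegaard Floer input beyond what is already packaged in slice--Bennequin, which the paper uses anyway) and is in fact slightly more general: it only needs $K$ to admit a Legendrian representative with $\tb\geq 0$, not the full rotation-number condition in the definition of $r$--suitability, whereas the paper's argument routes through $\tau(K)>0$. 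Both conclude identically: $\g_4(\Wh(K))=1$ together with $\tb=1$, $\rot=0=2\g_4(\Wh(K))-1-1$ gives $1$--suitability.
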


\begin{proof} Note that $\Wh(K)$, has a Legendrian diagram $\mathcal{W}h(\mathcal{K})$, with $\tb(\mathcal{W}h(\mathcal{K}))=1$ and $\rot(\mathcal{W}h(\mathcal{K}))=0$~\cite{Rud95}\cite[Figure 9]{AkMat97}. Since $K$ is $r$--suitable and non-slice (since $r\geq 0$), $\tau(K)>0$ by~\eqref{eqn:tauequalsgenus}. Then by \cite[Theorem 1.4]{He07}, $\tau(\Wh(K))=1$ and therefore, $\Wh(K)$ is not slice. Since $\g(\Wh(K))=1$, this implies that $\g_4(\Wh(K))=1$ and therefore, $\Wh(K)$ is 1--suitable. \end{proof}

\begin{lem}\label{lem:sumofsuitable} If $K$ is $r$--suitable and $J$ is $k$--suitable, $K\# J$ is $(r+k+1)$--suitable.
\end{lem}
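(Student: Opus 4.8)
The plan is to combine Legendrian representatives realizing the suitability conditions for $K$ and $J$ via the Legendrian connected sum, and then check that the resulting invariants match the requirement in Definition~\ref{defn:rsuitable}. First I would recall the standard additivity formulas for the Legendrian connected sum: if $\mathcal{K}_1$ and $\mathcal{K}_2$ are Legendrian knots, then $\mathcal{K}_1 \# \mathcal{K}_2$ is a Legendrian representative of $K_1 \# K_2$ with $\tb(\mathcal{K}_1 \# \mathcal{K}_2) = \tb(\mathcal{K}_1) + \tb(\mathcal{K}_2) + 1$ and $\rot(\mathcal{K}_1 \# \mathcal{K}_2) = \rot(\mathcal{K}_1) + \rot(\mathcal{K}_2)$ (see~\cite{Etn05}). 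So, starting from a Legendrian representative $\mathcal{K}$ of $K$ with $\tb(\mathcal{K}) = r$ and $\rot(\mathcal{K}) = 2\g_4(K) - 1 - r$, and a Legendrian representative $\mathcal{J}$ of $J$ with $\tb(\mathcal{J}) = k$ and $\rot(\mathcal{J}) = 2\g_4(J) - 1 - k$, the connected sum $\mathcal{K} \# \mathcal{J}$ has
$$
\tb(\mathcal{K} \# \mathcal{J}) = r + k + 1, \qquad \rot(\mathcal{K} \# \mathcal{J}) = \big(2\g_4(K) - 1 - r\big) + \big(2\g_4(J) - 1 - k\big).
$$

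Next I would use additivity of the smooth slice genus under connected sum. This does not hold for arbitrary knots, but here it follows from the suitability hypotheses: by~\eqref{eqn:tauequalsgenus} we have $\g_4(K) = \tau(K)$ and $\g_4(J) = \tau(J)$, and $\tau$ is additive under connected sum, so $\tau(K \# J) = \tau(K) + \tau(J) = \g_4(K) + \g_4(J)$. Combined with the general inequality $\g_4(K \# J) \leq \g_4(K) + \g_4(J)$ and the bound $\tau(K \# J) \leq \g_4(K \# J)$ from~\eqref{eqn:slicebennequinexpansions1}, this forces
$$
\g_4(K \# J) = \g_4(K) + \g_4(J).
$$
Substituting this into the rotation number computation above gives $\rot(\mathcal{K} \# \mathcal{J}) = 2\g_4(K \# J) - 2 - r - k$. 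Up to sign (and after possibly replacing $\mathcal{J}$ by its mirror-reverse within its Legendrian isotopy class to fix the sign of the rotation number, or simply noting that $r$--suitability only requires $\rot = 2\g_4 - 1 - r$ and the absolute-value form of the slice--Bennequin inequality controls the rest), we have $\rot(\mathcal{K} \# \mathcal{J}) = 2\g_4(K \# J) - 1 - (r + k + 1)$, which together with $\tb(\mathcal{K} \# \mathcal{J}) = r + k + 1$ is exactly the statement that $K \# J$ is $(r+k+1)$--suitable.

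The only genuinely delicate point is the sign bookkeeping in the rotation number: additivity of $\rot$ under Legendrian connected sum is as stated only once orientations are fixed consistently, and one must make sure the rotation numbers of $\mathcal{K}$ and $\mathcal{J}$ add rather than cancel. Since both are of the form $2\g_4 - 1 - (\text{positive quantity})$ and hence have a determined sign pattern relative to their $\tb$, and since the Legendrian connected sum and the choice of orientation are flexible enough to realize $|\rot(\mathcal{K}) + \rot(\mathcal{J})|$ at the cost of only the equality $\tb + |\rot| = 2\g_4 - 1$ (which is what the expanded slice--Bennequin inequality~\eqref{eqn:slicebennequinexpansions1} forces to be an equality here, as in Remark~\ref{rem:suitableequalities}), this causes no real trouble; I would simply remark that the suitability of $K \# J$ is witnessed by $\mathcal{K} \# \mathcal{J}$ with appropriately chosen orientations. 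This completes the proof.
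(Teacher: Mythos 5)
Your proof is correct and follows essentially the same route as the paper: Legendrian connected sum with $\tb(\mathcal{K}\#\mathcal{J})=\tb(\mathcal{K})+\tb(\mathcal{J})+1$ and $\rot(\mathcal{K}\#\mathcal{J})=\rot(\mathcal{K})+\rot(\mathcal{J})$, together with additivity of $\g_4$ under connected sum forced by $\g_4(K\#J)\geq\tau(K\#J)=\tau(K)+\tau(J)=\g_4(K)+\g_4(J)$ via~\eqref{eqn:slicebennequinexpansions1} and~\eqref{eqn:tauequalsgenus}. The closing worry about sign bookkeeping is a non-issue: the definition of suitability fixes $\rot=2\g_4-1-\tb$ exactly for each summand, so the rotation numbers add to precisely $2\g_4(K\#J)-1-(r+k+1)$ with no orientation adjustment needed.
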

\begin{proof} If $\mathcal{K}$ and $\mathcal{J}$ are Legendrian representatives of the knots $K$ and $J$ respectively, then, $\mathcal{K}\#\mathcal{J}$ is a Legendrian representative of $K\# J$, for which, by ~\cite[p.\ 39]{Etn05}, and Definition~\ref{defn:rsuitable},
$$\tb(\mathcal{K}\#\mathcal{J})=\tb(\mathcal{K})+\tb(\mathcal{J})+1=r+k+1,$$
 and 
$$\rot(\mathcal{K}\#\mathcal{J})=\rot(\mathcal{K})+\rot(\mathcal{J})=2\left(\g_4(K)+\g_4(J)\right)-1-(r+k+1).$$
Thus it suffices to show that $\g_4(K\#J)=\g_4(K)+\g_4(J)$. Clearly $\g_4(K\#J)\leq \g_4(K)+\g_4(J)$ holds for all knots. Conversely,
$$\g_4(K\#J)\geq \tau(K\#J)=\tau(K)+\tau(J)=\g_4(K)+\g_4(J),$$
by ~(\ref{eqn:slicebennequinexpansions1}) and ~(\ref{eqn:tauequalsgenus}).
\end{proof}

\begin{rem}\label{rem:rsuitableprops2} Since $\Wh(RHT)$ is 1--suitable by Lemma~\ref{lem:whiteheadsuitable} (and therefore, $k$--suitable for any $k\leq 1$), for any fixed integer $r$, we can find a topologically slice knot $K$ that is $r$--suitable by letting $K$ be the connected sum of $\max \{1,\lceil \frac{r+1}{2} \rceil\}$ copies of $\Wh(RHT)$, by the above lemma. \end{rem}

\begin{defn}\label{defn:4genusofpattern}Let $P$ be a winding number one pattern, i.e.\ a knot inside the unknotted standard solid torus $ST$. The slice genus of $P$, denoted $\g_4(P)$, is the least genus of a surface $\Sigma\subseteq  ST\times [0,1]$ cobounded by $P\hookrightarrow ST\times\{0\}$ and the core of $ST\times \{1\}$. \end{defn}

Note that it follows that $P$ must also cobound a surface of genus $\g_4(P)$ with a $k$--twisted longitude of $ST\times \{1\}$, for any value of $k$. 

\begin{rem}\label{rem:4genusofmazur}Clearly, the slice genus of the trivial pattern (given by the core of the solid torus) is zero. The slice genus of the Mazur pattern is one, as follows. Let $P$ denote the Mazur pattern. We know that $g_4(P)\leq 1$ since, by changing a single crossing,  $P$ can be transformed to a pattern isotopic to the core of $ST$.  On the other hand  it follows that $\g_4(P)\neq 0$, since if $P$ were concordant to the core of $ST$, then $P(K)$ would be concordant to $K$ for any $K$. But in~\cite[Section 3]{CFHeHo13} this was shown not to be the case, in particular for the right-handed trefoil. Hence $\g_4(P)= 1$.  \end{rem}

\begin{prop}\label{prop:distinctiterates}Fix an integer $r$. Let $P$ be any winding number one pattern with a Legendrian diagram $\mathcal{P}$ such that 
$$\tb(\mathcal{P})=0 \text{ and } 0<\g_4(P)\leq \frac{\rot(\mathcal{P})}{2}$$
For any $r$--suitable knot $K$, the iterated satellites $\{P^i_r(K)\mid i\geq 0\}$ are distinct smooth concordance classes. 

Moreover, 
\begin{align*}
\g_4(P^i_r(K))&= \g_4(K)+i\cdot\g_4(P),\\
\tau(P^i_r(K))&= \tau(K)+i\cdot\g_4(P),\\
\s(P^i_r(K))&= \s(K)+2i\cdot\g_4(P). 
\end{align*}
\end{prop}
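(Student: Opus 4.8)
The plan is to combine a lower bound coming from Legendrian/slice--Bennequin estimates with a matching upper bound from the genus additivity of the satellite construction. First I would produce a Legendrian representative of $P^i_r(K)$ with large Thurston--Bennequin and rotation numbers. Starting from an $r$--suitable Legendrian diagram $\K$ of $K$ with $\tb(\K)=r$, $\rot(\K)=2\g_4(K)-1-r$, and the given diagram $\mathcal{P}$ of $P$ with $\tb(\mathcal{P})=0$, one builds a Legendrian satellite front $\mathcal{P}(\K)$; the standard Legendrian satellite formula (see e.g.\ Ng--Traynor or Etnyre's notes) gives $\tb(\mathcal{P}(\K))=\w^2\tb(\K)+\tb(\mathcal{P})$ and $\rot(\mathcal{P}(\K))=\w\,\rot(\K)+\rot(\mathcal{P})$, where $\w=1$ is the winding number, so $\tb(\mathcal{P}(\K))=\tb(\K)=r$ and $\rot(\mathcal{P}(\K))=\rot(\K)+\rot(\mathcal{P})$. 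Since the Legendrian satellite of the Legendrian $r$--framed parallel realizes the $r$--twisted satellite $P_r(K)$, iterating $i$ times yields a Legendrian representative $\mathcal{P}^i(\K)$ of $P^i_r(K)$ with $\tb=r$ and $\rot=\rot(\K)+i\cdot\rot(\mathcal{P})=2\g_4(K)-1-r+i\cdot\rot(\mathcal{P})$.

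Next I would feed this into the expanded slice--Bennequin inequalities \eqref{eqn:slicebennequinexpansions1}. These give
\[
r + \bigl(2\g_4(K)-1-r+i\cdot\rot(\mathcal{P})\bigr) \le 2\tau(P^i_r(K))-1 \le 2\g_4(P^i_r(K))-1,
\]
and likewise with $\tau$ replaced by $\tfrac12\s$, yielding the lower bounds
\[
\g_4(P^i_r(K)) \ge \g_4(K) + i\cdot\frac{\rot(\mathcal{P})}{2} \ge \g_4(K)+i\cdot\g_4(P),
\]
where the last inequality uses the hypothesis $\g_4(P)\le \rot(\mathcal{P})/2$; similarly $\tau(P^i_r(K))\ge \tau(K)+i\cdot\g_4(P)$ and $\s(P^i_r(K))\ge \s(K)+2i\cdot\g_4(P)$. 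For the matching upper bound I would use the genus additivity of satellites: for any winding number one pattern, $P_r(L)$ cobounds in $S^3\times[0,1]$ a surface of genus $\g_4(P)$ with any $r$--twisted longitude of a knot $L$ (Definition~\ref{defn:4genusofpattern} and the remark after it), so capping a slice surface for $L$ with this cobordism gives $\g_4(P_r(L))\le \g_4(L)+\g_4(P)$. Iterating, $\g_4(P^i_r(K)) \le \g_4(K)+i\cdot\g_4(P)$, which matches the lower bound and forces equality; since $K$ is $r$--suitable we have $2\tau(K)=\s(K)=2\g_4(K)$ by \eqref{eqn:tauequalsgenus}, so chasing the inequalities back shows they are all equalities and produces the stated formulas for $\tau$ and $\s$ as well.

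Finally, distinctness of the smooth concordance classes $\{P^i_r(K)\mid i\ge 0\}$ is immediate from the genus (or $\tau$, or $\s$) formula together with $\g_4(P)>0$: the values $\g_4(P^i_r(K))=\g_4(K)+i\cdot\g_4(P)$ are strictly increasing in $i$, hence all distinct, and any two concordant knots have equal slice genus. I expect the main obstacle to be bookkeeping with the Legendrian satellite operation: one must check that the Legendrian satellite built from the $r$--framed (rather than $0$--framed) Legendrian push-off genuinely produces $P_r(K)$ as a smooth knot type and track the $\tb$/$\rot$ arithmetic correctly through the iteration, including the sign/orientation conventions so that the rotation numbers add rather than cancel. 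Everything else is a routine application of the already-established inequalities and genus additivity.
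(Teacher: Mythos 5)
Your proposal is correct and follows essentially the same route as the paper: a Legendrian satellite representative with $\tb=r$, the slice--Bennequin inequalities for the lower bound, genus additivity of the winding number one satellite for the matching upper bound, and distinctness from the strictly increasing slice genus. The only organizational difference is that you iterate the Legendrian satellite directly (valid since $\tb(\mathcal{P}(\K))=\tb(\K)=r$ again, so the construction keeps producing $r$--twisted satellites), whereas the paper packages the induction in a lemma showing $P_r(K)$ is again $r$--suitable; the arithmetic is identical.
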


\begin{rem}In the statement above, the notation $P^i_r(K)$ denotes the iterated satellite knot $P_r(P_r(\cdots (K)\cdots ))$, but in fact this is the same knot as the one obtained by constructing the iterated pattern $P^i$ (see~\cite[Section 2.1]{DR13} or~\cite[Section 2.1]{Ray14}) and then constructing the twisted satellite~$(P^i)_r(K)$. This is shown in Proposition~\ref{prop:iteratesorcompose}, by examining the gluing maps in the two a priori different constructions. \end{rem}

\begin{rem} The expression $\frac{\rot(\mathcal{P})}{2}$ in the statement of Proposition~\ref{prop:distinctiterates} is in fact an integer (this is not used in our proofs). More generally, for a Legendrian diagram $\mathcal{P}$ for a pattern $P$ with winding number $w(P)$, $\tb(\mathcal{P})$ and $\rot(\mathcal{P})$ have the same parity if $w(P)$ is odd, and opposite parities if $w(P)$ is even. This is due to the fact that the Thurston--Bennequin number and the rotation number of a Legendrian knot have different parities~\cite[Proposition 3.5.23]{Gei08}, and the fact that a Legendrian diagram $\mathcal{P}$ for a pattern $P$ can be changed to yield a  Legendrian representative for the knot $\widetilde{P}$ by introducing twice as many cusps as there are strands in $\mathcal{P}$. The result follows since $w(P)$ and the number of strands of $\mathcal{P}$ have the same parity. \end{rem}

Before giving the proof of Proposition~\ref{prop:distinctiterates}, we point out the following corollary. 

\begin{cor}\label{cor:mazurdistinctiterates}Fix an integer $r$ and let $P$ denote the Mazur operator. The iterated satellite knots $\{P^i_r(K)\mid i\geq 0\}$ correspond to distinct smooth concordance classes, and moreover, 
\begin{align*}
\g_4(P^i_r(K))&= \g_4(K)+i,\\
\tau(P^i_r(K))&= \tau(K)+i,\\
\s(P^i_r(K))&= \s(K)+2i.
\end{align*}
\end{cor}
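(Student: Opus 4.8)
The plan is to deduce Corollary~\ref{cor:mazurdistinctiterates} from Proposition~\ref{prop:distinctiterates} by exhibiting a single Legendrian diagram $\mathcal{P}$ for the Mazur pattern $P$ which satisfies the hypotheses $\tb(\mathcal{P})=0$ and $0<\g_4(P)\leq \rot(\mathcal{P})/2$. Once such a diagram is produced, both the distinctness of the iterated concordance classes $\{P^i_r(K)\mid i\geq 0\}$ and the three equalities for $\g_4$, $\tau$, and $\s$ follow immediately upon substituting $\g_4(P)=1$ (established in Remark~\ref{rem:4genusofmazur}) into the conclusion of Proposition~\ref{prop:distinctiterates}.

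First I would recall from Remark~\ref{rem:4genusofmazur} that $\g_4(P)=1$ for the Mazur pattern, so the condition $0<\g_4(P)$ is automatic, and the remaining requirement is to find a Legendrian front for $P$ with $\tb(\mathcal{P})=0$ and $\rot(\mathcal{P})\geq 2$. The natural approach is to draw an explicit Legendrian front diagram for the Mazur pattern sitting in the (standardly embedded) solid torus, starting from the standard front in Figure~\ref{fig:P}: smooth the front into a Legendrian immersed curve with cusps, compute $\tb$ as writhe minus half the number of cusps and $\rot$ as the signed count of cusps (down cusps minus up cusps, with the usual sign convention), and then adjust the diagram by Legendrian isotopy and (if necessary) by adding stabilizations of matching sign to arrive at a representative with $\tb=0$. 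Because stabilization changes $\tb$ and $\rot$ in tandem, one has freedom to trade $\tb$ for $\rot$: if a first attempt yields $\tb=0$ but $|\rot|<2$, one cannot increase $|\rot|$ without dropping $\tb$, so the real content is getting $\tb=0$ while $|\rot|$ is already at least $2$; if a first attempt gives $\tb>0$, negative (or positive) stabilizations bring $\tb$ down to $0$ while pushing $|\rot|$ up by the same amount, so one only needs a starting diagram with $\tb\geq 2$, or with $\tb\geq 0$ and the right rotation number. I would verify the numbers directly off the drawn front.

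The main obstacle I expect is producing the explicit Legendrian front and correctly bookkeeping its classical invariants — in particular making sure the diagram genuinely represents the Mazur pattern (winding number one) inside the solid torus rather than some other pattern, and that the cusp and writhe counts give exactly $\tb=0$ with $|\rot|$ large enough; this is a hands-on diagrammatic computation rather than a conceptual difficulty. Once the diagram is in hand, I would simply write: applying Proposition~\ref{prop:distinctiterates} to this Legendrian diagram $\mathcal{P}$ for the Mazur operator $P$, with $\g_4(P)=1$, shows that the knots $\{P^i_r(K)\mid i\geq 0\}$ lie in distinct smooth concordance classes and satisfy
\[
\g_4(P^i_r(K))=\g_4(K)+i,\quad \tau(P^i_r(K))=\tau(K)+i,\quad \s(P^i_r(K))=\s(K)+2i,
\]
which is exactly the assertion of the corollary. (Alternatively, if one prefers to avoid drawing a front, the $\tau$ and $\s$ equalities can be anchored by citing the satellite formulas for $\tau$ and $\s$ of winding number one satellites in the literature, but the cleanest route consistent with this section is to feed a valid Legendrian diagram into Proposition~\ref{prop:distinctiterates}.)
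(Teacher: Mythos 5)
Your proposal is correct and follows essentially the same route as the paper: the paper exhibits a Legendrian front for the Mazur pattern with $\tb(\mathcal{P})=2$, $\rot(\mathcal{P})=0$ (Figure~\ref{fig:legmazur}) and positively stabilizes twice to get $\tb=0$, $\rot=2$, then feeds this together with $\g_4(P)=1$ from Remark~\ref{rem:4genusofmazur} into Proposition~\ref{prop:distinctiterates}, exactly as you describe.
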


\begin{figure}[t]
\begin{picture}(5,1.75)
\put(0,0.5){\includegraphics[width=1.5in]{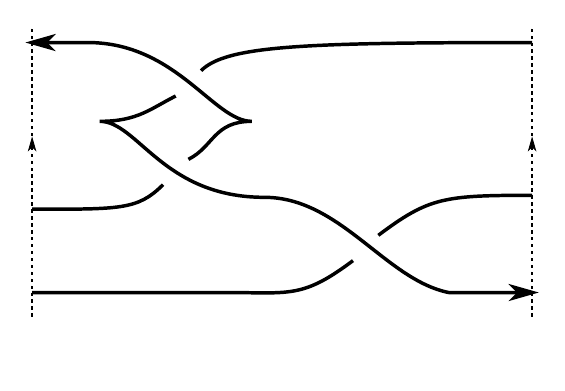}}
\put(0.1,0.3){\small $\tb(\mathcal{P})=2, \rot(\mathcal{P})=0$}
\put(0.65,0.1){(a)}
\put(1.75,0.4){\includegraphics[width=1.5in]{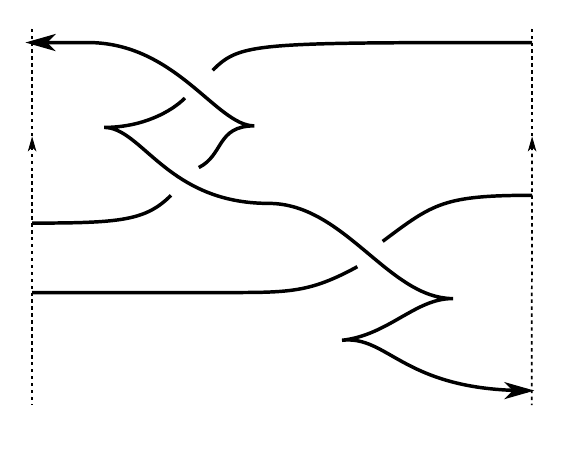}}
\put(1.825,0.3){\small $\tb(\mathcal{P}')=1, \rot(\mathcal{P}')=1$}
\put(2.4,0.1){(b)}
\put(3.5,0.3){\includegraphics[width=1.5in]{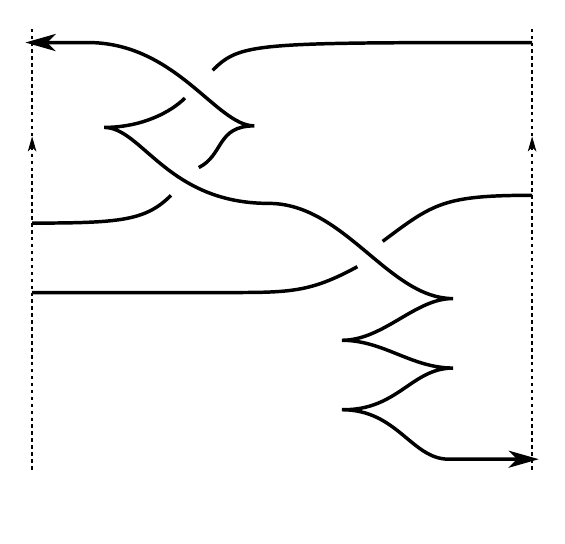}}
\put(3.525,0.3){\small$\tb(\mathcal{P}'')=0, \rot(\mathcal{P}'')=2$}
\put(4.15,0.1){(c)}
\end{picture}
\caption{Legendrian diagrams $\mathcal{P}$ (left), $\mathcal{P}'$ (center), and $\mathcal{P}''$ (right) for the Mazur pattern $P$. Note that $\mathcal{P}'$ and $\mathcal{P}''$ are obtained by from $\mathcal{P}$ and $\mathcal{P}'$ respectively, by performing positive stabilization.}\label{fig:legmazur}
\end{figure}

\begin{proof} Figure \ref{fig:legmazur}(b) shows a Legendrian diagram $\mathcal{P}''$ for the Mazur pattern. We know from Remark~\ref{rem:4genusofmazur} that $\g_4(P)=1$. Thus $\mathcal{P}''$ satisfies the requirements of Proposition \ref{prop:distinctiterates}. The result follows.\end{proof}

\begin{rem} The case $r=0$ for Corollary~\ref{cor:mazurdistinctiterates} follows from the main theorem of \cite{Ray14}; the fact that $P(K)$ and $K$ give distinct concordance classes, and in particular have distinct slice genera, and $\tau$ and $\s$ invariants, was shown earlier in \cite{CFHeHo13}. \end{rem}

\begin{proof}[Proof of Proposition \ref{prop:distinctiterates}]The proof is a variation on the techniques of \cite{Ray14} and \cite[Theorem~3.1]{CFHeHo13}; the primary tool is the slice--Bennequin inequality \cite{Rud95}\cite[p. 133]{Etn05}.

\begin{figure}[t]
\begin{center}
\begin{picture}(5,2.5)
\put(0.25,0.65){\includegraphics[width=1.5in]{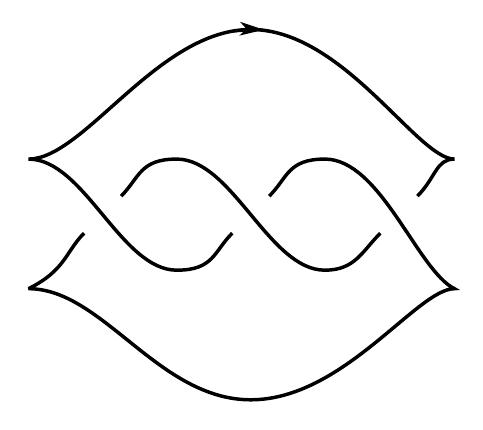}}
\put(0.95,0){$\K$}
\put(2.25,0.15){\includegraphics[width=2.5in]{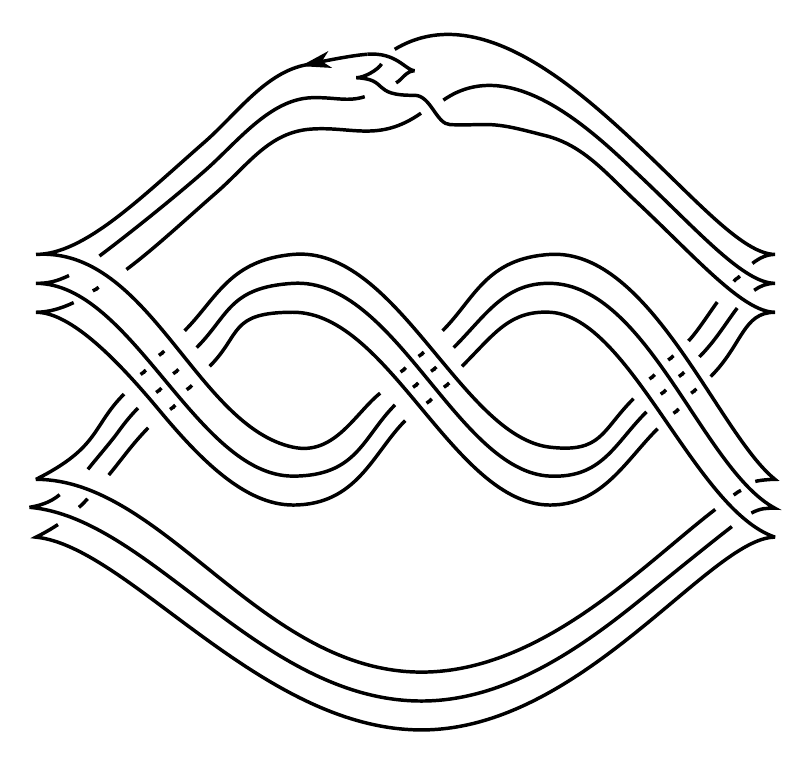}}
\put(3.4,0){$\mathcal{P}(\K)$}
\end{picture}
\end{center}
\caption{The Legendrian satellite operation. Left: a Legendrian representative $\K$ for the right-handed trefoil with $\tb(\K)=1$. Right: the Legendrian satellite $\mathcal{P}(\K)$ where $\mathcal{P}$ is the diagram for the Mazur pattern shown in Figure~\ref{fig:legmazur}(a). The reader may verify that $\mathcal{P}(\K)$ has the topological knot type of the 1--twisted satellite of the right-handed trefoil with companion the Mazur pattern. }\label{fig:legsatelliteoperation}
\end{figure}

Recall that given a Legendrian diagram $\mathcal{P}$ for a pattern $P$ and $\mathcal{K}$ a Legendrian representative of a knot $K$, the Legendrian satellite operation yields $\mathcal{P}(\mathcal{K})$, a Legendrian representative for the $\tb(\mathcal{K})$--twisted satellite of $K$ with pattern $P$ (see~\cite{Ng01} for an overview of the Legendrian satellite operation, and Figure~\ref{fig:legsatelliteoperation} for a picture). Since $K$ is $r$--suitable, we have a Legendrian representative $\mathcal{K}$ for $K$, with 
$$\tb(\mathcal{K})= r \text{ and }\rot(\mathcal{K})=2\g_4(K)-1-r.$$
Let $\mathcal{P}$ be the Legendrian diagram for pattern $P$ given in our hypotheses. Then, $\mathcal{P}(\mathcal{K})$ is a Legendrian representative for $P_r(K)$ since $\tb(\mathcal{K})=r$.  

By ~\cite[Remark 2.4]{Ng01}, since $P$ is winding number one, we see that 
$$\tb(\mathcal{P}(\mathcal{K}))=\tb(\mathcal{P})+\tb(\mathcal{K})=r$$ 
and 
$$\rot(\mathcal{P}(\mathcal{K}))=\rot(\mathcal{P})+\rot(\mathcal{K})=\rot(\mathcal{P})+(2\g_4(K)-1-r).$$  
By hypothesis $\rot(\mathcal{P})\geq 0$ and, by ~\eqref{eqn:rnotbig}, $2\g(K)-1-r\geq 0$. Therefore, 
\begin{align*}
\tb(\mathcal{P}(\mathcal{K}))+\lvert\rot(\mathcal{P}(\mathcal{K}))\rvert & = r+\lvert\rot(\mathcal{P})+(2\g_4(K)-1-r)\rvert \\
&= \rot(\mathcal{P})+2\g_4(K)-1
\end{align*}
But by the slice--Bennequin inequality~(\ref{eqn:slicebennequinexpansions1}), we have that  
$$\tb(\mathcal{P}(\mathcal{K}))+\lvert\rot(\mathcal{P}(\mathcal{K}))\rvert\leq  2\g_4(P_r(K))-1.$$
from which it follows that
\begin{equation}\label{eqn:ref1}
\frac{\rot(\mathcal{P})}{2}+\g_4(K)\leq \g_4(P_r(K)).
\end{equation}
By hypothesis, $\g_4(P)\leq\frac{\rot(\mathcal{P})}{2}$, so 
\begin{equation}\label{eqn:ref2}
\g_4(P)+\g_4(K)\leq \g_4(P_r(K)).
\end{equation}

Since $\g_4(P)>0$ by hypothesis,  we have shown that $\g_4(K)<\g_4(P_r(K))$. In other words, the satellite operator $P_r$ increases the slice genus by at least one when applied to an $r$--suitable knot and thus, the knots $P_r(K)$ and $K$ represent distinct concordance classes. To complete the proof, we would like to iterate this process by applying $P_r$ again, since then we will have shown that the slice genera of the sequence $P^i_r(K)$ is a strictly increasing function of $i$, implying that the $P^i_r(K)$ represent distinct concordance classes. To iterate we need to show that $P_r(K)$ is itself $r$--suitable whenever $K$ is. This is accomplished by the $m=0$ case of the following lemma. 

\begin{lem}\label{lem:iteratesuitable} For any integer $r$, if $K$ is an $(r+m)$--suitable knot for some $m\geq 0$, and $P$ is a pattern with a Legendrian diagram $\mathcal{P}$ such that 
$$\tb(\mathcal{P})=m \text{ and } 0<\g_4(P)\leq \frac{m+\rot(\mathcal{P})}{2},$$
then $P_r(K)$ is an $(r+m)$--suitable knot, and $\g_4(P_r(K))=\g_4(K)+\g_4(P)$. \end{lem}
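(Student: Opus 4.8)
The plan is to mimic the argument just given for Proposition~\ref{prop:distinctiterates}, but carrying the extra twisting parameter $m$ through the Legendrian satellite operation. First I would record the input data: since $K$ is $(r+m)$--suitable, fix a Legendrian representative $\mathcal{K}$ with $\tb(\mathcal{K})=r+m$ and $\rot(\mathcal{K})=2\g_4(K)-1-(r+m)$. Since the given diagram $\mathcal{P}$ for $P$ has $\tb(\mathcal{P})=m$, the Legendrian satellite $\mathcal{P}(\mathcal{K})$ is a Legendrian representative of the $(\tb(\mathcal{K})-m)$--twisted satellite, i.e.\ of $P_r(K)$ (this is the point of taking $\tb(\mathcal{P})=m$: it exactly cancels the surplus $m$ in $\tb(\mathcal{K})$, landing us on $P_r(K)$ rather than $P_{r+m}(K)$). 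Then by~\cite[Remark 2.4]{Ng01}, using that $P$ has winding number one, $\tb(\mathcal{P}(\mathcal{K}))=\tb(\mathcal{P})+\tb(\mathcal{K})=m+(r+m)$ is not quite what we want; more precisely, the Legendrian satellite convention gives $\tb(\mathcal{P}(\mathcal{K}))=w(P)^2\tb(\mathcal{K})+\tb(\mathcal{P})$-type formula, and since $w(P)=1$ we get $\tb(\mathcal{P}(\mathcal{K}))=\tb(\mathcal{K})+\tb(\mathcal{P})$; but the relevant thing is that, reading $\mathcal{P}(\mathcal{K})$ as a representative of $P_r(K)$, the effective framing is $r$, matching the $m=0$ case. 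In short, the same bookkeeping as in the proof of Proposition~\ref{prop:distinctiterates} gives
\begin{align*}
\tb(\mathcal{P}(\mathcal{K}))+\lvert\rot(\mathcal{P}(\mathcal{K}))\rvert &= (r+m) + \lvert \rot(\mathcal{P}) + (2\g_4(K)-1-(r+m))\rvert\\
&= \rot(\mathcal{P})+m+2\g_4(K)-1,
\end{align*}
where the second equality uses $\rot(\mathcal{P})\geq 0$ (part of the hypothesis, since $\g_4(P)\leq\frac{m+\rot(\mathcal{P})}{2}$ together with $\tb(\mathcal{P})=m$ and parity force $\rot(\mathcal{P})\ge -m$, and in fact we only need the displayed quantity to be computed correctly, handling the sign as in the original proof) and $2\g_4(K)-1-(r+m)\geq 0$ from~\eqref{eqn:rnotbig} applied to the $(r+m)$--suitable knot $K$.

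Next I would feed this into the slice--Bennequin inequality~\eqref{eqn:slicebennequinexpansions1}, obtaining
\begin{equation*}
\rot(\mathcal{P})+m+2\g_4(K)-1 \leq 2\g_4(P_r(K))-1,
\end{equation*}
hence $\frac{m+\rot(\mathcal{P})}{2}+\g_4(K)\leq \g_4(P_r(K))$, and then the hypothesis $\g_4(P)\leq\frac{m+\rot(\mathcal{P})}{2}$ gives the lower bound $\g_4(K)+\g_4(P)\leq \g_4(P_r(K))$. For the reverse inequality $\g_4(P_r(K))\leq \g_4(K)+\g_4(P)$: stack a genus-$\g_4(P)$ surface in $ST\times[0,1]$ realizing $\g_4(P)$ (with its outer boundary an $r$--twisted longitude, which is allowed by the remark following Definition~\ref{defn:4genusofpattern}) onto a genus-$\g_4(K)$ slice surface for $K$ capping that longitude; the union is a surface in $B^4$ bounded by $P_r(K)$ of genus $\g_4(K)+\g_4(P)$. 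This establishes $\g_4(P_r(K))=\g_4(K)+\g_4(P)$, the second assertion of the lemma.

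Finally, to see that $P_r(K)$ is $(r+m)$--suitable I would exhibit the required Legendrian representative: $\mathcal{P}(\mathcal{K})$ itself. We have $\tb(\mathcal{P}(\mathcal{K}))=r+m$ by the computation above (reading $\mathcal{P}(\mathcal{K})$ with its natural framing), and we need $\rot(\mathcal{P}(\mathcal{K}))=2\g_4(P_r(K))-1-(r+m)$. But $\rot(\mathcal{P}(\mathcal{K}))=\rot(\mathcal{P})+\rot(\mathcal{K})=\rot(\mathcal{P})+2\g_4(K)-1-(r+m)$, and from $\g_4(P_r(K))=\g_4(K)+\g_4(P)$ together with the equality case just forced (namely that the two sides of the slice--Bennequin inequality above must in fact be equal once we know $\g_4(P_r(K))=\g_4(K)+\g_4(P)$ and $\g_4(P)=\frac{m+\rot(\mathcal{P})}{2}$ --- wait, this equality of $\g_4(P)$ with $\frac{m+\rot(\mathcal{P})}{2}$ is \emph{not} hypothesized, only $\leq$) one checks directly: $2\g_4(P_r(K))-1-(r+m)=2\g_4(K)+2\g_4(P)-1-(r+m)$, so it suffices that $\rot(\mathcal{P})=2\g_4(P)$. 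Here is the subtlety, and the step I expect to be the main obstacle: the hypothesis only gives $\g_4(P)\leq\frac{m+\rot(\mathcal{P})}{2}$, i.e.\ $\rot(\mathcal{P})\geq 2\g_4(P)-m$, which is weaker than $\rot(\mathcal{P})=2\g_4(P)$. So the clean ``$\mathcal{P}(\mathcal{K})$ works'' argument only succeeds when $m=0$ and equality holds. For general $m$ (and for the inequality case) I would instead argue as in Remark~\ref{rem:rsuitableprops1}/Remark~\ref{rem:suitableequalities}: having pinned down $\g_4(P_r(K))=\g_4(K)+\g_4(P)$ and knowing $2\tau=\s=2\g_4$ persists (via the slice--Bennequin expansions, since the lower bound is achieved), one produces the needed Legendrian representative either by positive stabilization from a representative with larger $\tb$ or by re-running the front-diagram construction. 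Concretely: there exists \emph{some} Legendrian representative of $P_r(K)$ with $\tb = r+m$; among all such, the slice--Bennequin inequality forces $\rot \le 2\g_4(P_r(K))-1-(r+m)$, and to get equality I would use that $\mathcal{P}(\mathcal{K})$ after $\left(\tfrac{m+\rot(\mathcal{P})}{2}-\g_4(P)\right)$ positive stabilizations realizes $\tb=r+m$ with $\rot$ of the correct absolute value --- this works because positive stabilization trades $\tb$ for $\rot$ one-for-one, and our diagram $\mathcal{P}(\mathcal{K})$ starts with $\tb = r+m$ already; hence one should instead \emph{destabilize} or choose $\mathcal{P}$ more carefully. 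I would resolve this by observing that in all intended applications (and in the statement) one may replace $\mathcal{P}$ by a positively stabilized diagram to arrange $\rot(\mathcal{P}) = 2\g_4(P)-m$ exactly is again not free; so the honest fix is: prove the genus equality as above, then invoke that for $P_r(K)$, being a knot with $2\tau(P_r(K)) = 2\g_4(P_r(K))$, the standard construction (Remark~\ref{rem:rsuitableprops1}, slice--Bennequin) yields a Legendrian representative achieving $\tb + |\rot| = 2\g_4 - 1$ with $\tb$ any prescribed value $\le 2\g_4-1$, in particular $\tb = r+m$; this is exactly the content of ``$(r+m)$--suitable.'' I expect verifying this last point --- that the slice--Bennequin equality case plus control of $\tb$ genuinely delivers the suitability data, rather than merely the genus/$\tau$/$\s$ equalities --- to require the most care, and it is where I would spend the bulk of the write-up, parallel to the argument already used to show positive braid closures are $(2\g_4-1)$--suitable.
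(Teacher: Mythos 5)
Your central claim about the Legendrian satellite is wrong, and it infects the whole computation. The Legendrian satellite $\mathcal{P}(\mathcal{K})$ represents the $\tb(\mathcal{K})$--twisted satellite of $K$, full stop; the value $\tb(\mathcal{P})=m$ does \emph{not} ``cancel the surplus $m$'' in the twisting. So with your choice $\tb(\mathcal{K})=r+m$, the knot $\mathcal{P}(\mathcal{K})$ is a representative of $P_{r+m}(K)$, not of $P_r(K)$, and your slice--Bennequin step bounds $\g_4(P_{r+m}(K))$ rather than the genus you need. Your bookkeeping is also internally inconsistent: under your own setup $\tb(\mathcal{P}(\mathcal{K}))=\tb(\mathcal{P})+\tb(\mathcal{K})=r+2m$, not $r+m$. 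The paper's fix is exactly the move you skipped: positively stabilize the $(r+m)$--suitable representative $m$ times (this is where $m\geq 0$ is used) to get $\mathcal{K}$ with $\tb(\mathcal{K})=r$ and $\rot(\mathcal{K})=2\g_4(K)-1-r$; then $\mathcal{P}(\mathcal{K})$ honestly represents $P_r(K)$, with $\tb=m+r$ and $\rot=\rot(\mathcal{P})+2\g_4(K)-1-r$, and the slice--Bennequin inequality gives $\g_4(K)+\tfrac{m+\rot(\mathcal{P})}{2}\leq \g_4(P_r(K))$. Your upper bound (stacking a genus--$\g_4(P)$ surface in $ST\times[0,1]$ onto a genus--$\g_4(K)$ surface for $K$, modulo getting the twisted longitude's framing right) is essentially the paper's and is fine.

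The second genuine gap is your endgame for suitability. The difficulty you flag --- that the hypothesis only gives $\g_4(P)\leq\tfrac{m+\rot(\mathcal{P})}{2}$, so $\rot(\mathcal{P}(\mathcal{K}))$ seems not to have the value demanded by $(r+m)$--suitability --- is resolved by the squeeze you almost wrote down but abandoned: chaining $\g_4(P_r(K))\leq \g_4(K)+\g_4(P)\leq \g_4(K)+\tfrac{m+\rot(\mathcal{P})}{2}\leq \g_4(P_r(K))$ forces \emph{all} of these to be equalities, so in particular $\g_4(P)=\tfrac{m+\rot(\mathcal{P})}{2}$ is a consequence, not an extra hypothesis, and then $\mathcal{P}(\mathcal{K})$ itself (built from the stabilized $\mathcal{K}$) is the required witness with $\tb=r+m$ and $\rot=2\g_4(P_r(K))-1-(r+m)$. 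Your proposed fallback --- that knowing $2\tau(P_r(K))=2\g_4(P_r(K))$ lets one manufacture a Legendrian representative with any prescribed $\tb\leq 2\g_4-1$ achieving $\tb+\lvert\rot\rvert=2\g_4-1$ --- is unproven and false in general: suitability is strictly stronger data than $\tau=\g_4$ (one cannot destabilize at will or prescribe $\tb$), and propagating that data through the satellite via an explicit representative is precisely the content of the lemma, not something that can be invoked.
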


\begin{proof} Since $K$ is $(r+m)$--suitable, we have a Legendrian representative $\mathcal{K}'$ for $K$, with 
$$\tb(\mathcal{K}')= r+m \text{ and }\rot(\mathcal{K}')=2\g_4(K)-1-r-m.$$
By positive stabilization we arrive at a Legendrian representative $\mathcal{K}$ for $K$ with
$$\tb(\mathcal{K})= r \text{ and }\rot(\mathcal{K})=2\g_4(K)-1-r.$$
Let $\mathcal{P}$ be the hypothesized Legendrian diagram for the pattern $P$. Then $\mathcal{P}(\mathcal{K})$ is a Legendrian representative for $P_r(K)$ since $\tb(\mathcal{K})=r$.    We have seen that $\tb(\mathcal{P}(\mathcal{K}))=r+m$ and $\rot(\mathcal{P}(\mathcal{K}))=\rot(\mathcal{P})+2\g_4(K)-1-r.$ To show that $P_r(K)$ is $(r+m)$--suitable it now suffices to show that 
$$\g_4(P_r(K))=\g_4(K)+\frac{m+\rot(\mathcal{P})}{2}.$$ 

By hypothesis there is a surface $\Sigma$ of genus $\g_4(P)$ inside $ST\times [0,1]$ cobounded by $P\hookrightarrow ST\times\{0\}$ and the $(-r)$--twisted longitude of the core of $ST\times\{1\}$.  Under the embedding $f_r:ST\hookrightarrow S^3$ that defines the $r$--twisted satellite construction, this twisted longitude is identified with the untwisted longitude of $K$. Therefore the image of $\Sigma$ under $f_r\times id: ST\times [0,1]\hookrightarrow S^3\times [0,1]$ is a surface of genus $\g_4(P)$ cobounded by $P_r(K)\hookrightarrow S^3\times \{0\}$ and $K\hookrightarrow S^3\times \{1\}$.  Thus, 
\begin{equation}\label{eq:A}
\g_4(P_r(K))\leq \g_4(K)+\g_4(P) \leq \g_4(K)+\frac{m+\rot(\mathcal{P})}{2},
\end{equation} 
using our hypothesis on $g_4(P)$. 

On the other hand, by the slice--Bennequin inequality ~(\ref{eqn:slicebennequinexpansions1}), we see that  
$$\tb(\mathcal{P}(\mathcal{K}))+\lvert\rot(\mathcal{P}(\mathcal{K}))\rvert\leq  2\g_4(P_r(K))-1.$$

We know that $2\g_4(K)-1-r\geq m$ by ~(\ref{eqn:rnotbig}), and $\rot(\mathcal{P})\geq -m$ by hypothesis. Therefore, 
\begin{align*}
\tb(\mathcal{P}(\mathcal{K}))+\lvert\rot(\mathcal{P}(\mathcal{K}))\rvert & = r+m+\lvert\rot(\mathcal{P})+(2\g_4(K)-1-r)\rvert \\
&= m+\rot(\mathcal{P})+2\g_4(K)-1
\end{align*}
from which it follows that
\begin{equation}\label{eqn:ref3}
\g_4(K)+\g_4(P)\leq\g_4(K)+\frac{m+\rot(\mathcal{P})}{2}\leq \g_4(P_r(K)).
\end{equation}

By combining ~\eqref{eqn:ref3} and ~\eqref{eq:A},  we see that all $4$ inequalities that appear are in fact equalities, so
$$\g_4(P_r(K))=\g_4(K)+\frac{m+\rot(\mathcal{P})}{2},$$
finishing the proof that $P_r(K)$ is $(r+m)$--suitable; and  $\g_4(P_r(K))=\g_4(K)+\g_4(P)$, completing the proof.\end{proof}

This finishes the proof of the assertion in Proposition~\ref{prop:distinctiterates} that the $\{P_r^i(K)\mid i\geq 0\}$ represent distinct concordance classes since, by Lemma~\ref{lem:iteratesuitable}, each $P^i_r(K)$ is $r$--suitable whenever $K$ is $r$--suitable.

But Lemma~\ref{lem:iteratesuitable} actually establishes something stronger. For, not only does each successive application of $P_r$ increase the slice genus, but it increases the slice genus by precisely $\g_4(P)$. Thus we see that $\g_4(P^i_r(K))=\g_4(K)+i\cdot\g_4(P)$.  Finally, by~\eqref{eqn:tauequalsgenus}, it follows that
\begin{align*}
\g_4(P^i_r(K))&= \g_4(K)+i\cdot\g_4(P),\\
\tau(P^i_r(K))&= \tau(K)+i\cdot\g_4(P),\\
\s(P^i_r(K))&= \s(K)+2i\cdot\g_4(P).
\end{align*}This completes the proof of Proposition~\ref{prop:distinctiterates}.
\end{proof}

\begin{proof}[Proof of Theorem \ref{thm:examples}]  From Remark~\ref{rem:rsuitableprops2} we know that for any $r$, we can find a topologically slice $r$--suitable knot $K$. Let $P$ denote the Mazur pattern. From Corollary \ref{cor:mazurdistinctiterates}, we see that the iterated satellites $\{P_r^i(K)\mid i\geq 0\}$ represent distinct concordance classes, with distinct slice genera, and distinct $\tau$ and $\s$ invariants, for any $r$--suitable knot $K$. In the case $r=0$, if $K$ is topologically slice, then $P(K)$ is topologically concordant to $P(U)$, which is unknotted (recall that $P(K)$ denotes the \textit{untwisted} satellite of $K$ with pattern $P$). By induction, if $K$ is topologically slice, $P^i(K)$ is topologically slice for each $i\geq 0$. 

The proof will be completed if we show that, for any $r$, and any $j>i$, $P^i_r(K)$ is $r$--shake concordant to $P^j_r(K)$.  Since  $P^{i+1}_r(K)=P_r(P^i_r(K))$ by definition, we see that by  Proposition~\ref{prop:satelliteimpliesshake1n}, $P^{i+1}_r(K)$ is $(1,\,n)$ $r$--shake concordant to $P^{i}_r(K)$ where $n$ is the geometric winding number of $P$ (thus, $n=3$). Similarly, $P^{i+1}_r(K)$ is $(1,\,n)$ $r$--shake concordant to $P^{i+2}_r(K)$.  By Proposition~\ref{prop:easytransitivity}, $P^{i+2}_r(K)$ is $(1,n^2)$ $r$--shake concordant to $P^{i}_r(K)$.  By repeating these steps, we conclude that $P^j_r(K)$ is $(1,\, n^{j-i})$ $r$--shake concordant to $P^i_r(K)$ for any $j>i$.  \end{proof}

Note that while we restricted ourselves to the Mazur pattern in the above proof, in light of Propositions~\ref{prop:distinctiterates}~and~\ref{prop:satelliteimpliesshake1n}, we could have used any winding number one pattern $P$ with $\widetilde{P}$ slice and a Legendrian diagram $\mathcal{P}$ such that 
$$\tb(\mathcal{P})=0 \text{ and } 0<\g_4(P)\leq \frac{\rot(\mathcal{P})}{2}.$$

%==========================================================================
\section{Obstructions to shake concordance}\label{sec:obstructions}

In this section we point out some elementary obstructions to shake concordance. 

\begin{prop} \label{prop:shakeconchomcob} If $K$ is $r$--shake concordant to $J$, $M^r_K$ is homology cobordant to $M^r_J$ in such a way that the (positive) meridian of $K$ is homologous to that of $J$. Consequently, if $K$ is 0--shake slice, it bounds a smoothly embedded disk in a homology $B^4$. \end{prop}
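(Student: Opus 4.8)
The plan is to build the homology cobordism between $M_K^r$ and $M_J^r$ directly from the $r$--shake concordance, using the standard trick of taking the boundary of a regular neighborhood of the shaking surface inside $S^3\times[0,1]$. First I would set up notation: let $F\hookrightarrow S^3\times[0,1]$ be an $(m,n)$ $r$--shake concordance between $K$ and $J$, so $F$ is a smooth, properly embedded, compact, connected genus zero surface whose boundary is an $m$--component $r$--shaking of $K$ in $S^3\times\{0\}$ and an $n$--component $r$--shaking of $J$ in $S^3\times\{1\}$. The key ambient object is $X := \big(S^3\times[0,1]\big) \setminus N(F)$, the complement of an open tubular neighborhood of $F$. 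I would then identify the pieces of $\partial X$: two ``horizontal'' pieces coming from $S^3\times\{0\}$ and $S^3\times\{1\}$ with neighborhoods of the shaking links removed, and one ``vertical'' piece, the unit normal circle bundle of $F$ (which is trivial since $F$ is orientable with trivial normal bundle, as noted in Remark~\ref{rem:parallelcopies}).

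The heart of the argument is to check that the two horizontal boundary pieces are in fact copies of $M_K^r$ and $M_J^r$. The complement of an $r$--shaking of $K$ in $S^3$, with the surface framing used to glue, is the same 3--manifold one obtains by $r$--framed surgery on the single knot $K$: gluing the $2k+1$ parallel solid-torus neighborhoods of the parallels of $K$ back in via the framing induced by $F$ amounts, after capping the extra ``tube'' boundary circles on the vertical part, to filling in a single $r$--framed solid torus along $K$. More carefully, I would argue that $F$ together with its normal bundle, after the parallel-copies adjustments of Remark~\ref{rem:parallelcopies}, gives a cobordism rel boundary from $M_K^r$ to $M_J^r$; one way to organize this is to note that filling $X$ with $F\times D^2$ recovers $W^r_{K_0,K_1}$ with an open neighborhood of the embedded sphere removed, and compute homology there. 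The main obstacle I anticipate is precisely this bookkeeping: verifying that gluing in the normal-disk-bundle pieces over $F$ turns $X$ into something whose two ends are exactly $M_K^r$ and $M_J^r$ (and not some surgery on a link), and simultaneously keeping track of orientations so that the \emph{positive} meridian of $K$ maps to the \emph{positive} meridian of $J$ — the orientation/framing conventions for the reversed parallels in the shaking are where sign errors hide.

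Once $X$ (suitably capped along the vertical boundary) is identified as a cobordism $W$ from $M_K^r$ to $M_J^r$, I would verify it is a homology cobordism by a Mayer--Vietoris / excision computation: since $F$ is a connected genus zero surface (a disk with $2k+2l$ subdisks removed, roughly speaking a planar surface), its complement in $S^3\times[0,1]$ has homology computable from that of $F$ and its circle bundle, and the resulting $H_*(W, M_K^r)$ vanishes over $\mathbb{Z}$; the class of the meridian is carried by a normal circle to $F$, which is the same circle (up to orientation, handled above) at both ends, giving the ``preserving the homology class of the meridian'' clause. For the ``consequently'' statement: take $J=U$ the unknot (so $K$ being $r$--shake slice means $K$ is $(m,1)$ $r$--shake concordant to $U$ by Proposition~\ref{prop:shakesliceandconcordant}), specialize to $r=0$. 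Then $M_U^0 = S^1\times S^2$, which bounds $S^1\times B^3$; the meridian of $U$ is the class $S^1\times\{pt\}$. Cap off the $M_U^0$ end of the homology cobordism $W$ with $S^1\times B^3$ to obtain a homology $S^3\times[0,1]$ with one end $M_K^0$; filling the $M_K^0$ end appropriately, or rather capping $S^3\times\{pt\}$ side-in, yields a homology $B^4$ whose boundary is $M_K^0$, and then the cocore disk of the $S^1\times B^3$ cap, whose boundary is the meridian of $K$ in $M_K^0 = S^3_0(K)$, glues to the meridian disk to produce a smoothly embedded disk in this homology $B^4$ bounded by $K\subseteq S^3$. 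I would double-check that the ``preserving the meridian'' condition is exactly what allows the cocore disk and the $0$--surgery meridian disk to match up, since that identification is what converts a null-homologous statement about $M_K^0$ into an honest slice disk for $K$ in a homology ball.
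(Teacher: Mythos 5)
Your plan for the first assertion is, in spirit, the same Mayer--Vietoris computation the paper performs, but the paper does it after surgering the embedded sphere $\Sigma\subset W^r_{K,J}$ (remove $S^2\times D^2$, glue in $D^3\times S^1$): there the two ends are $M_K^r$ and $-M_J^r$ for free, because the surgery takes place in the interior of a manifold whose boundary already is $M_K^r\sqcup -M_J^r$. Your version, which never leaves $S^3\times[0,1]$, is not correct as stated. Filling $X=(S^3\times[0,1])\setminus N(F)$ back in with $F\times D^2$ recovers $S^3\times[0,1]$, not $W^r_{K,J}$ with a neighborhood of the sphere removed; the latter also contains the two $2$--handles minus neighborhoods of the parallel copies of their cores (pieces of the form $D^2\times(\text{planar surface})$). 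Correspondingly, the horizontal boundary pieces of $X$ are exteriors of the shaking links, and no capping of the ``tube'' circles on the vertical boundary by itself turns an $m$--component shaking-link exterior into $M_K^r$: to get $M_K^r$ you must actually attach the $r$--framed $2$--handle along $K$ (and along $J$), i.e.\ in effect pass to $W^r_{K,J}$ and surger $\Sigma$. This is exactly the step you deferred as ``bookkeeping,'' and it is where the content of the construction lies, so as written there is a gap.

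The ``consequently'' step contains a genuine error. A homology $B^4$ has boundary a homology $3$--sphere, so nothing you build can be ``a homology $B^4$ whose boundary is $M_K^0$'' (note $H_1(M_K^0)\cong\mathbb{Z}$); and the meridian class, which at the unknot end is $S^1\times\{pt\}$, generates $H_1$ of the cap $S^1\times B^3$, so it bounds no disk there --- there is no ``cocore disk of the $S^1\times B^3$ cap.'' The missing idea (and the paper's route) is: capping the $M_U^0$ end with $S^1\times D^3$ yields a homology circle $V$ with $\partial V=M_K^0$; now attach a $0$--framed $2$--handle to $V$ along the meridian of $K$. The new boundary is $S^3$, a Mayer--Vietoris argument shows the resulting $4$--manifold is a homology ball, and the co-core of this added $2$--handle --- not anything inside the $S^1\times B^3$ cap --- is the smoothly embedded disk bounded by $K$. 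Without that $2$--handle attachment your argument does not produce a slice disk.
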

\begin{proof} Let $\Sigma\hookrightarrow W^r_{K,J}$ be the embedded $2$--sphere guaranteed by the definition of $r$--shake concordance given in Section \ref{sec:intro}. Note that the two boundary components of $W^r_{K,J}$ are  $M_K^r$ and $-M_J^r$.  Note also that $\Sigma$ has a trivial normal bundle; as a result, we can perform surgery on $\Sigma$, i.e.\ cut out a regular neighborhood of $\Sigma$---which is diffeomorphic to $S^2\times D^2$---and glue in a copy of $D^3\times S^1$. A Mayer--Vietoris argument then shows that the resulting 4--manifold is a homology cobordism between $M_K^0$ and $M_J^0$. 

Recall that $K$ is 0--shake slice if and only if it is $(m,\,1)$ 0--shake concordant to the unknot. Therefore, by the above proof, if $K$ is 0--shake slice, $M_K^0$ is homology cobordant to $M_U^0\cong S^1 \times S^2$. We can cap off $M_U^0$ with a $S^1 \times D^3$. This gives a 4--manifold $V$ which is a homology circle with $\partial V=M_K^0$. It is then well-known that $K$ is slice in a homology 4--ball (see for example ~\cite[Proposition 1.2]{CFHeHo13}). We see this by attaching a 0--framed 2--handle to $V$ along the meridian of $K$; call the resulting 4--manifold $W$. Observe that $\partial W \cong S^3$. A Mayer--Vietoris argument shows that $W$ is a homology ball. Moreover, the co-core of the 2--handle is a disk bounded by $K$ in $W$. \end{proof}

Proposition~\ref{prop:shakeconchomcob} is particularly important since many concordance invariants are determined by a knot's zero surgery manifold. 

\begin{cor}\label{cor:algconc} If $K$ is 0--shake concordant to $J$, then the algebraic concordance class of $K$ is equal to that of $J$. In particular $K$ and $J$ have equal signatures and Arf invariants.
\end{cor}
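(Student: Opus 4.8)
The plan is to deduce this immediately from Proposition~\ref{prop:shakeconchomcob} together with the standard fact that the algebraic concordance class of a knot is an invariant of the homology cobordism type of its zero-surgery manifold rel the meridian. First I would recall that if $K$ is $0$--shake concordant to $J$, then Proposition~\ref{prop:shakeconchomcob} gives a homology cobordism $N$ between $M_K^0$ and $M_J^0$ in which the positive meridian $\mu_K$ is homologous to $\mu_J$. The algebraic concordance class (the Witt class of the Seifert form, or equivalently the $S$-equivalence class) is computed from the Blanchfield pairing on $H_1$ of the infinite cyclic cover, and this data is entirely determined by the pair $(M_K^0,\,[\mu_K])$ up to homology cobordism: this is classical (see e.g.\ Levine, or the treatment via the Cappell--Shaneson/Casson--Gordon machinery), and one can also see it directly because a homology cobordism rel meridian induces an isomorphism of Blanchfield pairings.

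Concretely I would proceed as follows. Let $N$ be the homology cobordism with $\partial N = M_K^0 \sqcup -M_J^0$ carrying $[\mu_K] = [\mu_J]$ in $H_1(N)\cong\mathbb{Z}$. The generator of $H_1(N)\cong\mathbb{Z}$ restricts to the generator on each boundary piece (normalized by the meridian class), so the map $\pi_1(N)\to\mathbb{Z}$ restricts compatibly on the boundary components. Taking the corresponding infinite cyclic covers $\widetilde N$, $\widetilde M_K^0$, $\widetilde M_J^0$, a Mayer--Vietoris / half-lives-half-dies argument (since $N$ is a $\mathbb{Z}$-homology cobordism, it is in fact a rational homology cobordism and one gets isometries of the rational Blanchfield pairings) shows that the Blanchfield pairings of $M_K^0$ and $M_J^0$ are isometric. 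Since the Blanchfield pairing determines the algebraic concordance class (Kearton, Levine), we conclude $K$ and $J$ are algebraically concordant. The signatures $\sigma_\omega$ and the Arf invariant are functions of the algebraic concordance class, hence agree; in particular the (integral) signature and Arf invariant coincide.

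The main obstacle is being careful about exactly which flavor of ``homology cobordism'' is needed and whether it suffices for the Blanchfield pairing to be preserved: one must check that the meridian-preserving hypothesis really does pin down the $\mathbb{Z}$-cover consistently and that $\mathbb{Z}[t,t^{-1}]$-coefficient (or $\mathbb{Q}(t)$-coefficient) duality in the cobordism $\widetilde N$ yields an isometry rather than merely a Witt-equivalence of pairings — though Witt-equivalence is in fact all that is needed for the algebraic concordance class, the Arf invariant, and the signatures. An alternative, perhaps cleaner, route that avoids Blanchfield pairings entirely: signatures $\sigma_\omega(K)$ for $\omega$ on the unit circle (away from roots of the Alexander polynomial) equal $\mathrm{sign}_\omega$ of the $\omega$-twisted intersection form of any $4$--manifold bounded by $M_K^0$ with the appropriate $H_1$-generator, and these are homology-cobordism invariant by Novikov additivity; the Arf invariant is recovered from $M_K^0$ as $\mathrm{Arf}$ of the mod-$2$ Seifert form, itself a homology cobordism invariant of $(M_K^0,[\mu_K])$ via the Rokhlin-type argument. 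I would likely present the Blanchfield-pairing argument as the main line and remark that the same conclusion for signatures and Arf follows directly from Proposition~\ref{prop:shakeconchomcob} by these standard considerations.
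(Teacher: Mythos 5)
Your approach is exactly the paper's: the corollary is stated there with no separate argument, being an immediate consequence of Proposition~\ref{prop:shakeconchomcob} together with the classical fact that the algebraic concordance class (hence all signatures and the Arf invariant) is determined by the pair $(M_K^0,[\mu_K])$ up to homology cobordism, which is precisely what you invoke. One correction to your main line, though: a meridian-preserving homology cobordism does \emph{not} in general induce an isometry of Blanchfield pairings --- for a slice knot $K$, $M_K^0$ is homology cobordant rel meridian to $M_U^0$, yet the Blanchfield pairing of $K$ is metabolic but usually nonzero while that of the unknot vanishes --- so the assertion ``the Blanchfield pairings of $M_K^0$ and $M_J^0$ are isometric'' is too strong. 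What the duality argument in the cover of the cobordism actually gives is Witt equivalence (stable metabolicity of the difference), and, as you yourself note in your final paragraph, this is all that is needed to identify the algebraic concordance classes and hence the signatures and Arf invariants; alternatively, the direct route you sketch via twisted signatures of bounding $4$--manifolds and the Rokhlin-type argument for Arf sidesteps the Blanchfield pairing entirely. With the isometry claim weakened to Witt equivalence, the argument is correct and is the intended one.
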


\begin{cor}\label{cor:tauzero}If $K$ is 0--shake slice, $\tau(K)=0$.\end{cor}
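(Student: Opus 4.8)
\textbf{Proof proposal for Corollary~\ref{cor:tauzero}.}

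The plan is to reduce to the satellite characterization of $0$--shake sliceness and then apply the additivity and monotonicity properties of $\tau$. By Corollary~\ref{cor:characterizeshakeslice} (with $r=0$), if $K$ is $0$--shake slice, then there is a winding number one pattern $P$ with $\widetilde{P}$ ribbon such that $P(K)$ is slice; hence $\tau(P(K))=0$. So it suffices to prove that $\tau(P(K)) \geq \tau(K)$ whenever $P$ has winding number one and $\widetilde{P}$ is slice (or ribbon), and symmetrically $\tau(\overline{P}(\overline{K})) \geq \tau(\overline{K})$ — equivalently $\tau(P(K)) \leq \tau(K)$ — so that both inequalities force $\tau(K) = \tau(P(K)) = 0$.

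First I would establish the lower bound. The key input is that $\tau$ of a winding number one satellite bounds $\tau$ of the companion from below by a genus-type inequality: if there is a genus $g$ cobordism in $ST \times [0,1]$ between $P$ and the core of $ST$ (here $g = \g_4(P)$, which is $0$ when $\widetilde{P}$ is slice and $P$ is concordant to the core in $ST\times[0,1]$ — but beware, $\widetilde{P}$ slice does \emph{not} by itself make $\g_4(P) = 0$, e.g.\ the Mazur pattern), then glueing this into $S^3 \times [0,1]$ via the satellite embedding produces a genus $g$ cobordism between $P(K)$ and $K$ in $S^3 \times [0,1]$, so $|\tau(P(K)) - \tau(K)| \leq g$. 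Since this alone is not enough when $g > 0$, the sharper route is to invoke the $0$--framed version of the argument in Proposition~\ref{prop:shakeconchomcob}: being $0$--shake slice implies $K$ is slice in a homology $B^4$, and then use that $\tau$ (being defined via knot Floer homology and behaving well under the relevant cobordisms) vanishes for knots slice in a homology ball. Concretely, $M_K^0$ is homology cobordant to $S^1 \times S^2$ rel the meridian, so $K$ bounds a smoothly embedded disk in a homology $4$--ball $W$ with $\partial W = S^3$; then the adjunction-type inequality for $\tau$ over homology $4$--balls (using that knot Floer $\tau$ gives a genus bound in any homology ball, cf.\ Ozsv\'ath--Szab\'o / the slice--Bennequin package cited in the excerpt) forces $\tau(K) = 0$.

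I would therefore structure the proof in two lines: (1) by Proposition~\ref{prop:shakeconchomcob}, $K$ is slice in a homology $B^4$; (2) $\tau$ vanishes on knots slice in a homology $B^4$. The main obstacle is step (2): one must either cite the known fact that the Ozsv\'ath--Szab\'o $\tau$ invariant obstructs sliceness in \emph{homology} balls (not just $B^4$), which follows because $\tau$'s genus bound is proved using only the existence of a negative-definite — indeed homologically trivial — cobordism, or, alternatively, bypass homology balls entirely and argue purely via Corollary~\ref{cor:characterizeshakeslice} plus the inequality $\tau(P(K)) \geq \tau(K) - \g_4(P)$ combined with its mirror; but since $\g_4(P)$ can be positive this cleaner-looking route does not close, so the homology-ball argument is the one to commit to. If the ambient machinery for $\tau$ over homology balls needs care, I would note that $\tau(K) \le \g_4(K)$ generalizes: any knot slice in a homology ball has $\tau = 0$ because such a knot is, in particular, concordant (in a homology $S^3 \times I$) to the unknot, and $\tau$ is an invariant of concordance in homology $S^3 \times I$ — which is exactly the $\C^*$ setting referenced after Corollary~\ref{cor:characterizezero2}.
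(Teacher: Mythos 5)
Your committed argument—(1) by Proposition~\ref{prop:shakeconchomcob} a $0$--shake slice knot bounds a smooth disk in a homology $B^4$, and (2) the Ozsv\'ath--Szab\'o $\tau$ invariant obstructs sliceness in homology balls—is exactly the paper's proof, which cites Proposition~\ref{prop:shakeconchomcob} together with \cite[Theorem 1.1]{OzSz03}. Your preliminary detour through Corollary~\ref{cor:characterizeshakeslice} is correctly abandoned (since $\g_4(P)$ need not vanish), so the proposal is correct and takes essentially the same route as the paper.
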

\begin{proof}This follows from Proposition~\ref{prop:shakeconchomcob} and \cite[Theorem 1.1]{OzSz03}.\end{proof}

The following result is known, but since a proof does not appear in print, we provide one.

\begin{cor}[\cite{Rob65}]\label{cor:arfzero}If $K$ is $r$--shake slice, $\Arf(K)=0$ \end{cor}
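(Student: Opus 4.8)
The plan is to use the characterization of $r$--shake slice knots built up in this section together with the classical fact that the Arf invariant is a concordance invariant. By Corollary~\ref{cor:characterizeshakeslice}, if $K$ is $r$--shake slice then there is a winding number one pattern $P$ with $\widetilde{P}$ ribbon such that $P_r(K)$ is slice; in particular $\Arf(P_r(K)) = 0$, since $\Arf$ vanishes on slice knots. So the whole problem reduces to the following purely three--dimensional statement: for a winding number one pattern $P$ and any knot $K$, one has $\Arf(P_r(K)) = \Arf(\widetilde{P}) + \Arf(K)$, where the $r$--twisting does not affect $\Arf$ because $r$--framed surgery on a winding number one curve changes the pattern by a full twist on strands with net algebraic winding one, and such a twist does not change the Arf invariant (it alters the Seifert form only by a change that preserves the Arf invariant mod $2$). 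Combined with $\Arf(\widetilde{P}) = 0$ (as $\widetilde{P}$ is ribbon, hence slice), this gives $\Arf(P_r(K)) = \Arf(K)$, and therefore $\Arf(K) = 0$.

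First I would recall the satellite formula for the Arf invariant: for a winding number $w$ pattern $P$ and a companion $K$, $\Arf(P(K)) = \Arf(\widetilde{P}) + w^2\,\Arf(K) \pmod 2$ (this is classical; it follows from a Seifert surface computation, building a Seifert surface for the satellite from one for $\widetilde P$ with $w$ parallel copies of a Seifert surface for $K$ glued in along the solid torus). Since $w = 1$ here, $\Arf(P(K)) = \Arf(\widetilde P) + \Arf(K)$. Next I would handle the $r$--twisting: $P_r(K)$ differs from the untwisted satellite construction applied to a reparametrized solid torus, equivalently $P_r(K) = P'(K)$ where $P'$ is the image of $P$ under $r$ Dehn twists of the solid torus; since $w(P)=1$, these Dehn twists send $\widetilde P$ to $\widetilde{P'}$, a knot obtained from $\widetilde P$ by adding $r$ full twists to a single band's worth of strands. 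The standard parity argument on Seifert matrices (adding a full twist to algebraically one strand adds a $\pm 1$ to a single diagonal entry of a Seifert matrix, which does not change the Arf invariant, since the Arf invariant depends only on whether the Seifert form, which is even after a suitable congruence, has Arf $0$ or $1$, and a single $\pm1$ diagonal modification can be absorbed) shows $\Arf(\widetilde{P'}) = \Arf(\widetilde P)$. Hence $\Arf(P_r(K)) = \Arf(\widetilde P) + \Arf(K) = \Arf(K)$, the last equality because $\widetilde P$ is ribbon.

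I would then assemble the argument: $K$ is $r$--shake slice $\Rightarrow$ some $P_r(K)$ is slice with $\widetilde P$ ribbon $\Rightarrow$ $0 = \Arf(P_r(K)) = \Arf(K)$. I expect the main obstacle to be pinning down the twisting step cleanly — i.e.\ verifying that passing from the $0$--twisted to the $r$--twisted satellite changes $\widetilde P$ only by a modification that provably leaves $\Arf$ unchanged, and that this is compatible with the winding number one hypothesis. An alternative route to the twisting step, which might be cleaner, is to invoke Proposition~\ref{prop:shakeconchomcob}'s method at the level of Arf rather than the full algebraic concordance class: an $r$--shaking of $K$ bounding a genus zero surface in $B^4$ forces a relation among the Arf invariants of the components of the shaking, and since all $2n+1$ components are $\pm K$ with an overall algebraically--one count, the Arf invariants of the copies telescope to give $\Arf(K) = 0$ directly. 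I would present whichever of these two is shorter to write rigorously, but the satellite--formula approach via Corollary~\ref{cor:characterizeshakeslice} is the most in keeping with the spirit of this section.
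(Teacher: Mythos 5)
Your reduction via Corollary~\ref{cor:characterizeshakeslice} is fine, and so is the satellite formula $\Arf(P(K))=\Arf(\widetilde{P})+\Arf(K)$ for winding number one patterns, but the twisting step is a genuine gap. Passing from $P_0(K)$ to $P_r(K)$ amounts to adding $r$ full twists to \emph{all} strands of $P$ meeting a meridian disk of the solid torus, i.e.\ to the geometric wrapping number of strands (at least $3$, with mixed orientations, for any pattern not isotopic to the core) -- not to ``algebraically one strand.'' Such a twist is not a $\pm 1$ change in a single diagonal Seifert matrix entry; it can change the Seifert form and $\Delta(-1)$ substantially, and there is no elementary parity argument that it preserves $\Arf$. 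What your argument actually requires is that $\Arf(P_r(U))=0$ whenever $\widetilde{P}$ is ribbon (since $\Arf(P_r(K))=\Arf(P_r(U))+\Arf(K)$ by the satellite formula applied to the twisted pattern), and the slice/ribbon hypothesis is essential to any proof of this that I can see: the statement is true, but the proof goes through Proposition~\ref{prop:satelliteimpliesshake1n} (so that $P_r(U)$ cobounds a genus zero surface in $S^3\times[0,1]$ with an $r$--shaking of the unknot) together with Robertello's theorem \cite{Rob65} that two knots each cobounding a genus zero surface with a common proper link have equal Arf invariants. That is exactly the paper's proof of this corollary, applied one satellite later; so your route, once made rigorous, does not avoid Robertello -- it only adds the detour through Corollary~\ref{cor:characterizeshakeslice}.

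Your fallback ``telescoping'' sketch has the same problem in a purer form: the claim that the Arf invariants of the $2n+1$ components of an $r$--shaking bounding a genus zero surface must cancel to give $\Arf(K)=0$ \emph{is} Robertello's theorem on proper links, which you neither cite nor prove. For comparison, the paper's proof is short and direct: $K$ is $(2k+1,1)$ $r$--shake concordant to the unknot, so the unknot cobounds a genus zero surface in $S^3\times[0,1]$ with a $(2k+1)$--component $r$--shaking of $K$; banding oppositely oriented parallels shows that $K$ cobounds a genus zero surface with that same $r$--shaking; an $r$--shaking is a proper link; and Robertello's theorem then gives $\Arf(K)=\Arf(U)=0$. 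I would either adopt that argument or, if you want to keep the satellite route, prove the twisting claim honestly -- which in practice means invoking the same two ingredients.
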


\begin{proof}In \cite[Theorem 2]{Rob65}, Robertello showed that given two knots $K$ and $J$, if each cobounds a genus zero surface in $S^3 \times [0,1]$ with a `proper' link $L$, then $\Arf(K) = \Arf(J)$. A proper link $\sqcup_{i=1}^n L_i$ is a link where $\Sigma_{j\neq i}\, \lk(L_i,L_j)$ is even for each $i$. Since $K$ is $r$--shake slice, there is some $k$ for which $K$ is $(2k + 1, 1)$ $r$--shake concordant to the unknot. Moreover, $K$ and a $2k + 1$--component $r$--shaking of $K$ also cobound a genus zero surface in $S^3 \times [0, 1]$---we can see this by adding bands between oppositely oriented components of the $r$--shaking to cancel all components but one. It is easy to see that any $r$--shaking of a knot is a proper link.\end{proof}

Even in the case $r\neq 0$ certain Tristram signatures obstruct a knot being $r$--shake slice as pointed out in ~\cite{Ak77}.

%==========================================================================
\section{$r$--shake genus of knots}\label{sec:shakegenus}

In this section we establish what we will call an  $r$--shake slice--Bennequin inequality. Using this we show that the Thurston--Bennequin numbers of Legendrian representatives can obstruct a knot from being $r$--shake slice and more generally can give a lower bound on the $r$--shake genus. We show that the sequence of $r$--shake genera of the previously considered families $\{P^i_r(K)~|~i\geq 0\}$ is increasing, implying that the $r$--shake genus is not invariant under $r$--shake concordance!

\begin{prop}[\cite{AkMat97,LisMat98}]\label{prop:preshakeslicebennequin}Fix a knot $K$. Let $\mathcal{K}$ be a Legendrian representative of $K$ and $\Sigma\subseteq B^4$ a smooth properly embedded surface for which $\partial \Sigma$ is an $r$--shaking of $K$ such that $r\leq \tb(\mathcal{K})-1$. Then
$$\tb(\mathcal{K})+\lvert \rot(\mathcal{K})\rvert \leq 2\g(\Sigma)-1.$$
\end{prop}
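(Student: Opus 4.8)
The plan is to deduce this from the ordinary slice--Bennequin inequality applied to a knot obtained by closing off the shaking inside a standard piece of $B^4$, at the cost of the hypothesis $r \le \tb(\mathcal{K})-1$. Concretely, let $L$ be the given $r$--shaking of $K$, so $L$ is a $(2n+1)$--component link, and let $\Sigma \subseteq B^4$ be the surface with $\partial\Sigma = L$ and genus $g(\Sigma)$. The idea is to band $L$ together inside a collar $S^3\times[0,1]$ of $\partial B^4$ to produce a single knot $K'$, thereby capping $\Sigma$ off with $n$ bands to get a connected surface $\Sigma'$ in $B^4$ with $\partial\Sigma' = K'$ and $g(\Sigma') = g(\Sigma) + n$ (adding $n$ bands to a surface with $2n+1$ boundary circles to make it connected with one boundary circle raises the genus by exactly $n$, since it drops $b_0+b_1$ appropriately). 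Then the classical slice--Bennequin inequality gives $\tb(\mathcal{K}') + |\rot(\mathcal{K}')| \le 2g(\Sigma') - 1 = 2g(\Sigma) + 2n - 1$ for any Legendrian representative $\mathcal{K}'$ of $K'$.

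The second ingredient is to engineer $K'$ and its Legendrian representative so that $\tb(\mathcal{K}') \ge \tb(\mathcal{K}) + 2n$ (with the rotation number controlled). Here is where the shaking structure and the hypothesis $r\le \tb(\mathcal{K})-1$ enter. Recall the shaking consists of $2n+1$ parallel copies of $K$ with the $r$--framing, $n+1$ oriented one way and $n$ the other. Starting from the Legendrian front for $\mathcal{K}$ with $\tb(\mathcal{K})$, take $2n+1$ Legendrian push-offs; the Legendrian push-off has $\tb$ equal to $\tb(\mathcal{K})$ but the \emph{contact} framing of these parallels is $\tb(\mathcal{K})$, which exceeds $r$ by hypothesis, so we must add $\tb(\mathcal{K}) - r \ge 1$ negative kinks (or equivalently build in the framing correction) to realize the $r$--framed parallels --- this is exactly the Akbulut--Matveyev / Lisca--Matic style construction of representing a handle attachment along a shaking by a Legendrian picture once the framing is sufficiently negative. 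One then fuses the $2n+1$ Legendrian strands using Legendrian bands: each band connecting an oppositely-oriented pair of parallel strands can be arranged to be a ``trivial'' Legendrian band that merges two components into one, and the effect on $\tb$ of merging two Legendrian knots $\mathcal{A},\mathcal{B}$ by such a band is $\tb(\mathcal{A}\#_b\mathcal{B}) = \tb(\mathcal{A}) + \tb(\mathcal{B}) + 1$, while rotation numbers of oppositely oriented parallels cancel. Carrying this out for all $n$ pairs and keeping the last (unpaired) strand, one computes $\tb(\mathcal{K}') = \tb(\mathcal{K}) + 2n + (\text{nonnegative correction from the framing and kinks})$ and $\rot(\mathcal{K}') = \pm\rot(\mathcal{K})$ (the $n+1$ strands oriented with $K$ contribute $\rot$ each, the $n$ reversed ones contribute $-\rot$ each, netting one copy; the $\rot$ contributions of the merging bands and kinks can be bookkept to not decrease $|\rot|$, or absorbed into the inequality). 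Feeding this into the classical bound: $\tb(\mathcal{K}) + 2n + |\rot(\mathcal{K})| \le \tb(\mathcal{K}') + |\rot(\mathcal{K}')| \le 2g(\Sigma) + 2n - 1$, and cancelling $2n$ yields exactly $\tb(\mathcal{K}) + |\rot(\mathcal{K})| \le 2g(\Sigma) - 1$.

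The main obstacle I expect is the careful Legendrian bookkeeping in the second step: one must verify that the framing hypothesis $r \le \tb(\mathcal{K})-1$ is precisely what allows the $r$--framed parallels of $\mathcal{K}$ to be drawn as honest Legendrian fronts (pushoffs plus the right number of stabilization-type kinks) \emph{without} being forced to decrease $\tb$ below the target, and that the fusion bands can be taken Legendrian and ``boundary-trivial'' so that the $\tb$-additivity-plus-one formula applies cleanly at each merge. The orientation-reversing bands require a little extra care since a band between oppositely oriented components is the relevant move; but after fixing orientations consistently this is the standard connected-sum-along-a-band formula for the Thurston--Bennequin invariant. I would organize the write-up as: (1) reduce to producing $\mathcal{K}'$ with $\tb(\mathcal{K}') \ge \tb(\mathcal{K}) + 2n$ and $|\rot(\mathcal{K}')| \ge |\rot(\mathcal{K})|$ bounding a connected surface of genus $g(\Sigma)+n$; (2) construct $\mathcal{K}'$ from the Legendrian front of $\mathcal{K}$ using the framing inequality; (3) apply the slice--Bennequin inequality and simplify. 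An alternative, possibly cleaner route worth mentioning: instead of closing up, work directly in the handlebody $W_K^r$, isotope $\Sigma$ to meet the handle in parallel cores, and apply the adjunction-type inequality of Akbulut--Matveyev / Lisca--Matic for surfaces in $4$--manifolds with boundary (which is where the cited references point), the Legendrian surgery picture being valid exactly when $r \le \tb(\mathcal{K})-1$; this packages the framing hypothesis into the validity of the Stein/Weinstein handle attachment rather than into an explicit front manipulation.
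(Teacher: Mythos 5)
Your primary route has a genuine gap, and it is not the one the paper takes. First, a bookkeeping error: to fuse a $(2n+1)$--component shaking into a single knot you need $2n$ bands, not $n$, and since $\Sigma$ is already connected each such band raises the genus by one, so $\g(\Sigma')=\g(\Sigma)+2n$, not $\g(\Sigma)+n$; your final cancellation then requires $\tb(\mathcal{K}')\ge \tb(\mathcal{K})+4n$, not $+2n$. Second, and fatally, the formula $\tb(\mathcal{A}\#_b\mathcal{B})=\tb(\mathcal{A})+\tb(\mathcal{B})+1$ is valid only for a connected sum of \emph{split} Legendrian knots joined by a trivial band; it fails for fusing linked parallel copies, because after fusion the inter-component crossings become self-crossings and their signs (governed by the framing $r$) enter the writhe. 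A direct front computation shows the problem: fusing two oppositely oriented contact-framed push-offs of $\mathcal{K}$ by a small band produces an unknot, whose $\tb$ is at most $-1$, not $2\tb(\mathcal{K})+1$. Worse, the ``framing correction'' you invoke goes the wrong way: realizing $r$--framed parallels when $r\le \tb(\mathcal{K})-1$ forces you to insert \emph{negative} crossings/kinks, which decrease the fused knot's $\tb$; making $r$ more negative (which the hypothesis permits) only makes this worse. So the needed lower bound on $\tb(\mathcal{K}')$ is simply not available, and the classical slice--Bennequin inequality applied to a fusion cannot recover the $2n$ (or $4n$) you must cancel.

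The ``alternative route'' you mention in your last sentence is essentially the paper's actual proof, and it is the one to write up: after arranging $\rot(\mathcal{K})\ge 0$, positively stabilize $\mathcal{K}$ to $\mathcal{K}'$ with $\tb(\mathcal{K}')=r+1$ and $\rot(\mathcal{K}')=\rot(\mathcal{K})+\tb(\mathcal{K})-r-1$ (this is exactly where $r\le\tb(\mathcal{K})-1$ is used); attach an $r$--framed $2$--handle to $B^4$ along $\mathcal{K}'$, which is a Stein domain since $r=\tb(\mathcal{K}')-1$; cap the $2n+1$ boundary circles of $\Sigma$ with parallel copies of the core of the handle, yielding a closed surface $\overline{\Sigma}$ of the same genus with $[\overline{\Sigma}]^2=r$ and $c_1(X)([\overline{\Sigma}])=\rot(\mathcal{K}')$; then the Akbulut--Matveyev/Lisca--Mati\'{c} adjunction inequality gives $r+\lvert\rot(\mathcal{K}')\rvert\le 2\g(\Sigma)-2$, and since both $\rot(\mathcal{K})$ and $\tb(\mathcal{K})-r-1$ are nonnegative the stabilization terms cancel to give $\tb(\mathcal{K})+\lvert\rot(\mathcal{K})\rvert\le 2\g(\Sigma)-1$. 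Without these steps (the stabilization to match the framing, the capping-off, and the evaluation of $[\overline{\Sigma}]^2$ and $c_1$), the adjunction approach is only a gesture, and your detailed front-manipulation argument does not close the gap.
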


This yields the following useful corollaries.

\begin{cor}[$r$--shake slice--Bennequin inequality]\label{cor:shakeslicebennequin}For any Legendrian representative $\mathcal{K}$ of a knot $K$ with $\tb(\mathcal{K})-1\geq r $, $$\tb(\mathcal{K})+\lvert\rot(\mathcal{K})\rvert\leq 2\gsh^r(K)-1.$$\end{cor}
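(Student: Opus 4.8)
The plan is to deduce the $r$--shake slice--Bennequin inequality directly from Proposition~\ref{prop:preshakeslicebennequin}. Recall that $\gsh^r(K)$ is by definition the minimal genus of a smooth, properly embedded, compact, connected surface $\Sigma \subseteq B^4$ whose boundary $\partial \Sigma$ is an $r$--shaking of $K$. So fix any Legendrian representative $\mathcal{K}$ of $K$ with $\tb(\mathcal{K}) - 1 \geq r$, and let $\Sigma$ be a surface realizing the $r$--shake genus, i.e.\ $\partial\Sigma$ is an $r$--shaking of $K$ and $\g(\Sigma) = \gsh^r(K)$.

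First I would check that the hypotheses of Proposition~\ref{prop:preshakeslicebennequin} are met: we need a Legendrian representative $\mathcal{K}$ of $K$, a smooth properly embedded surface $\Sigma \subseteq B^4$ with $\partial\Sigma$ an $r$--shaking of $K$, and the inequality $r \leq \tb(\mathcal{K}) - 1$. All three hold by our choices and the standing hypothesis $\tb(\mathcal{K}) - 1 \geq r$. Therefore Proposition~\ref{prop:preshakeslicebennequin} applies verbatim to the pair $(\mathcal{K}, \Sigma)$ and gives
$$\tb(\mathcal{K}) + \lvert \rot(\mathcal{K}) \rvert \leq 2\g(\Sigma) - 1.$$
Since $\g(\Sigma) = \gsh^r(K)$ by construction, substituting yields exactly
$$\tb(\mathcal{K}) + \lvert \rot(\mathcal{K}) \rvert \leq 2\gsh^r(K) - 1,$$
which is the claimed inequality. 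One small point to be careful about is that Proposition~\ref{prop:preshakeslicebennequin} is stated for a \emph{given} surface $\Sigma$ realizing some $r$--shaking, rather than the minimal-genus one; but there is no difficulty, since we are free to feed in the genus-minimizing surface, and the $r$--shake genus $\gsh^r(K)$ is precisely its genus. Nothing here is an obstacle — the corollary is an immediate specialization once the minimal-genus surface is plugged into the proposition.

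In writing this up I would keep it to two or three sentences: let $\mathcal{K}$ be a Legendrian representative with $\tb(\mathcal{K}) - 1 \geq r$, let $\Sigma\subseteq B^4$ be a minimal-genus surface bounded by an $r$--shaking of $K$ so that $\g(\Sigma) = \gsh^r(K)$, apply Proposition~\ref{prop:preshakeslicebennequin}, and substitute. The only ``work'' is recognizing that the definition of $\gsh^r$ makes the substitution legitimate, so I do not anticipate any genuine obstacle; the content all resides in Proposition~\ref{prop:preshakeslicebennequin} itself.
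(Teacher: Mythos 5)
Your argument is exactly the paper's: the corollary is deduced by applying Proposition~\ref{prop:preshakeslicebennequin} to a genus-minimizing surface bounded by an $r$--shaking of $K$ and invoking the definition of $\gsh^r(K)$. The proposal is correct and matches the paper's (one-line) proof.
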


%\begin{cor}\label{cor:othershakeslicebennequin}For any fixed integer $r$ and any knot $K$ which is \textit{not} $r$--shake slice,
%$$\tb(\mathcal{K})+\lvert\rot(\mathcal{K})\rvert\leq 2\gsh^r(K)-1,$$
%for any Legendrian representative $\mathcal{K}$ of $K$ with $\tb(\mathcal{K})\geq r+1$. \end{cor}

Let $\TB(K)$ be the maximal Thurston--Bennequin number of $K$, i.e.\ the maximal Thurston--Bennequin number over all Legendrian representatives of $K$. 

\begin{cor}\label{cor:notrshakeslice} If $\TB(K)\geq 1$ then $K$ is not $r$--shake slice for any $r< \TB(K)$; in particular, $\gsh^r(K)\geq\frac{1}{2}(\TB(K)+1)$ and $K$ is not 0--shake slice.\end{cor}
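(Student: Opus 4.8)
The plan is to derive Corollary~\ref{cor:notrshakeslice} as an immediate consequence of the $r$--shake slice--Bennequin inequality (Corollary~\ref{cor:shakeslicebennequin}). First I would pick a Legendrian representative $\mathcal{K}$ of $K$ that realizes the maximal Thurston--Bennequin number, so that $\tb(\mathcal{K}) = \TB(K)$. The hypothesis $r < \TB(K)$ then gives $r \leq \TB(K) - 1 = \tb(\mathcal{K}) - 1$, which is precisely the condition needed to invoke Corollary~\ref{cor:shakeslicebennequin} for this $\mathcal{K}$ and this $r$.

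Next I would apply Corollary~\ref{cor:shakeslicebennequin}, which yields
$$\TB(K) \leq \tb(\mathcal{K}) + \lvert\rot(\mathcal{K})\rvert \leq 2\gsh^r(K) - 1,$$
using $\lvert\rot(\mathcal{K})\rvert \geq 0$. Rearranging gives $\gsh^r(K) \geq \frac{1}{2}(\TB(K)+1)$. Since $\TB(K) \geq 1$ by hypothesis, we get $\gsh^r(K) \geq 1 > 0$, so $K$ cannot be $r$--shake slice (an $r$--shake slice knot has $\gsh^r(K) = 0$ by definition). Finally, since $\TB(K) \geq 1 > 0 = 0+1-1$... more directly, $0 < 1 \leq \TB(K)$, so $0 < \TB(K)$ and hence $r = 0$ falls in the forbidden range $r < \TB(K)$, giving the last assertion that $K$ is not $0$--shake slice.

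There is essentially no obstacle here; the entire content is packaged in Proposition~\ref{prop:preshakeslicebennequin} and its corollary. The only point requiring a modicum of care is to make sure the Legendrian representative realizing $\TB(K)$ is used (so that the inequality $r \leq \tb(\mathcal{K}) - 1$ holds with the sharpest possible bound), and to remember that "$r$--shake slice" means $\gsh^r(K) = 0$, which is contradicted by a strictly positive lower bound on $\gsh^r(K)$.
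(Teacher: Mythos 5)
Your proposal is correct and follows essentially the same route as the paper: choose a Legendrian representative realizing $\TB(K)$, note $r\leq\tb(\mathcal{K})-1$, and apply Corollary~\ref{cor:shakeslicebennequin} (discarding $\lvert\rot(\mathcal{K})\rvert\geq 0$) to get $1\leq\TB(K)\leq 2\gsh^r(K)-1$, which gives both the genus bound and the non-shake-sliceness conclusions. No gaps.
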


\begin{proof}[Proof of Proposition \ref{prop:preshakeslicebennequin}] We claim that we may assume that $\rot(\mathcal{K})\geq 0$. Changing the orientation of a Legendrian knot changes the sign of the rotation number, but leaves the Thurston--Bennequin number unchanged---this follows from the combinatorial definition of Thurston--Bennequin number and rotation number. Moreover an $r$--shaking of the topological knot type of $rK$ (the reverse of $K$) bounds the surface $\Sigma$ with reversed orientation. Thus, since $\tb(r\mathcal{K})=\tb(\mathcal{K})$, $\lvert\rot(r\mathcal{K})\rvert=\lvert\rot(\mathcal{K})\rvert$, and $\g(r\Sigma)=\g(\Sigma)$, it suffices to prove the statement for either orientation of $\mathcal{K}$, and so we may as well pick the orientation with non-negative rotation number. 

Positive stabilization \cite[p.\ 15]{Etn05} decreases the Thurston--Bennequin number by one at the expense of increasing the rotation number by one. Repeated positive stabilization yields $\mathcal{K}'$, also a Legendrian representative of $K$, with $\tb(\mathcal{K}')=r+1$ and $\rot(\mathcal{K}')=\rot(\mathcal{K})+\tb(\mathcal{K})-r-1$.  

Attach a 2--handle to $B^4$ along $\mathcal{K}'$ with framing $r$. Since we have matched the framings, we can cap off $\Sigma$ with several copies of the core of the attached 2--handle to get a closed surface $\overline{\Sigma}$ with genus $\g(\overline{\Sigma})=\g(\Sigma)$. Moreover, since the 2--handle was attached along the Legendrian knot $\mathcal{K}'$ with framing $\tb(\mathcal{K}')-1$, the resulting 4--manifold $X$ admits a Stein structure~\cite[Proposition 2.3]{Gom98}. By~\cite{AkMat97,LisMat98} (see~\cite[Theorem 3.4]{AkYas13}),

\begin{equation}\label{eqn:adjunction}
2\g(\overline{\Sigma})-2\geq [\overline{\Sigma}]^2 + \lvert c_1(X)([\overline{\Sigma}])\rvert.
\end{equation}

However, note that $[\overline{\Sigma}]^2=r$ (since this is the framing with which the 2--handle was attached). Moreover, $\rot(\mathcal{K}')=c_1(X)([\overline{\Sigma}])$ by~\cite[Proposition 2.3]{Gom98}\cite[Theorem 11.3.1]{GomStip99}. Therefore, 
$$r+\lvert\rot(\mathcal{K}')\rvert \leq 2g(\Sigma)-2.$$
It follows that 
$$(r+1) +\lvert \rot(\mathcal{K})+\tb(\mathcal{K})-r-1\rvert \leq 2\g(\Sigma)-1.$$
Since $\rot(\mathcal{K})$ and $\tb(\mathcal{K})-r-1$ are both non-negative, we see that $$r+1 + \rot(\mathcal{K}) +\tb(\mathcal{K})-r-1 \leq 2\g(\Sigma)-1,$$ that is, since $\rot(\mathcal{K})\geq 0$, $$\tb(\mathcal{K}) + \lvert \rot(\mathcal{K})\rvert \leq 2\g(\Sigma)-1$$ as needed. \end{proof}

\begin{proof}[Proof of Corollary \ref{cor:shakeslicebennequin}]This follows immediately from Proposition \ref{prop:preshakeslicebennequin} and the definition of $r$--shake genus. \end{proof}

%\begin{proof}[Proof of Corollary \ref{cor:othershakeslicebennequin}]Fix an integer $r$. Let $\Sigma$ be a smooth surface bounded by some $r$--shaking of $K$. Since $K$ is not $r$--shake slice, $\g(\Sigma)>0$. Since we also have $\tb(\mathcal{K})\geq r+1$, we can apply Proposition \ref{prop:preshakeslicebennequin}. The result follows by definition of $r$--shake genus. \end{proof}

\begin{proof}[Proof of Corollary \ref{cor:notrshakeslice}]Choose a Legendrian representative $\mathcal{K}$ of $K$ such that $\tb(\mathcal{K})=\TB(K)$; then, $r\leq \tb(\mathcal{K})-1$ by hypothesis. By Corollary~\ref{cor:shakeslicebennequin}, 
$$1\leq\TB(K)=\tb(\mathcal{K})\leq 2\gsh^r(K)-1$$ 
as needed.\end{proof}

\begin{rem}\label{rem:slicebennequinexpansions}Since $\gsh^0(K)\leq \g_4(K)$ for all knots $K$, we also have an expanded shake slice--Bennequin inequality (Corollary~\ref{cor:shakeslicebennequin})
$$\tb(\mathcal{K})+\lvert\rot(\mathcal{K})\rvert\leq 2\gsh^0(K)-1\leq 2\g_4(K)-1$$
for any Legendrian representative $\mathcal{K}$ of a knot $K$ with $\tb(\mathcal{K})\geq 1$. We saw earlier in~(\ref{eqn:slicebennequinexpansions1}) that there are two other such `expanded' versions of the slice--Bennequin inequality, as follows
$$\tb(\mathcal{K})+\lvert\rot(\mathcal{K})\rvert\leq 2\tau(K)-1\leq 2\g_4(K)-1$$
$$\tb(\mathcal{K})+\lvert\rot(\mathcal{K})\rvert\leq \s(K)-1\leq 2\g_4(K)-1$$
 using \cite[Corollary 1.1]{KronMrow13}\cite[Theorem 1.1]{OzSz03}\cite{Plam04, Shuma07}. It would be interesting to determine the relationships between $\gsh^0(K)$ and $\tau(K)$ or $\s(K)$; no relationship is currently known. \end{rem}

\begin{example}The shake slice--Benequin inequality and related statements allow us to compute the shake genera of several knots. For example, the positive torus knot $T_{p,q}$, with $p,q \geq 2$ and relatively prime, has a Legendrian diagram $\mathcal{T}_{p,q}$ with $\tb(\mathcal{T}_{p,q})=(p-1)(q-1)-1$ and $\rot(\mathcal{T}_{p,q})=0$. We see that $\tb(\mathcal{T}_{p,q})\geq 1$ for all relatively prime $p,q \geq 2$, and therefore, 
$$(p-1)(q-1)-1\leq 2\gsh^r(T_{p,q})-1,$$
for $r\leq(p-1)(q-1)-2$. Since $\g_4(T_{p,q})=\frac{(p-1)(q-1)}{2}$ by~\cite{KronMow93}, we see that 
$$\gsh^0(T_{p,q})=\frac{(p-1)(q-1)}{2}$$
for relatively prime $p,q\geq 2$ as well. In particular, 
\begin{align*}
\gsh^0(T_{2,3})&=1,\\
\gsh^0(T_{2,5})&=2,\\
\gsh^0(T_{2,7})&=3,
\end{align*}
etc.

For relatively prime $p,q\geq 3$, we see that $\TB(T_{p,q})\geq\tb(\mathcal{T}_{p,q})=(p-1)(q-1)-1\geq 3>1$, and therefore, by Corollary~\ref{cor:notrshakeslice}, $\gsh^r(T_{p,q})\geq \frac{1}{2}(\TB(T_{p,q})+1)$, for all $r<\TB(T_{p,q})$. But since $\TB(T_{p,q})\geq (p-1)(q-1)-1$ and $\g_4(T_{p,q})=\frac{(p-1)(q-1)}{2}$ as before~\cite{KronMow93}, we see that for any relatively prime $p,q\geq 3$ and any $r<(p-1)(q-1)-1$,
$$\gsh^r(T_{p,q})=\g_4(T_{p,q})=\frac{(p-1)(q-1)}{2}.$$
 \end{example}

\begin{lem}\label{lem:rshakegenusequality}If $K$ is $(r+1)$--suitable, $\gsh^r(K)=\g_4(K)$. \end{lem}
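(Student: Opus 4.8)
The plan is to prove Lemma~\ref{lem:rshakegenusequality} by combining the already-established shake slice--Bennequin inequality (Corollary~\ref{cor:shakeslicebennequin}) with the trivial upper bound $\gsh^r(K)\leq \g_4(K)$, which holds for all knots since any slice surface for $K$ in $B^4$ is in particular a genus-$\g_4(K)$ surface bounded by a (one-component) $r$-shaking of $K$. The whole content of the lemma is therefore to produce a Legendrian representative of $K$ whose Thurston--Bennequin and rotation numbers are large enough to make Corollary~\ref{cor:shakeslicebennequin} give the matching lower bound $\gsh^r(K)\geq \g_4(K)$.

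First I would invoke the hypothesis that $K$ is $(r+1)$-suitable: by Definition~\ref{defn:rsuitable} there is a Legendrian representative $\mathcal{K}$ with $\tb(\mathcal{K})=r+1$ and $\rot(\mathcal{K})=2\g_4(K)-1-(r+1)$. This representative satisfies the hypothesis of Corollary~\ref{cor:shakeslicebennequin} since $\tb(\mathcal{K})-1 = r \geq r$. I should check that $\rot(\mathcal{K})\geq 0$, which follows from inequality~\eqref{eqn:rnotbig}, namely $r+1\leq 2\g_4(K)-1$ (valid because $K$ is $(r+1)$-suitable), so that $2\g_4(K)-1-(r+1)\geq 0$ and hence $\lvert\rot(\mathcal{K})\rvert = \rot(\mathcal{K})$. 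Then Corollary~\ref{cor:shakeslicebennequin} gives
\[
\tb(\mathcal{K})+\lvert\rot(\mathcal{K})\rvert = (r+1) + (2\g_4(K)-1-(r+1)) = 2\g_4(K)-1 \leq 2\gsh^r(K)-1,
\]
that is, $\g_4(K)\leq \gsh^r(K)$.

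Combining this with $\gsh^r(K)\leq \g_4(K)$ yields $\gsh^r(K)=\g_4(K)$, completing the proof. There is essentially no obstacle here: the lemma is a bookkeeping consequence of the suitability hypothesis feeding exactly the right Legendrian data into the shake slice--Bennequin inequality, and the only point requiring a line of justification is the non-negativity of $\rot(\mathcal{K})$, which is precisely what~\eqref{eqn:rnotbig} provides. If I wanted to be slightly more general I could note that $(r+1)$-suitability is used only to land the Legendrian knot at $\tb = r+1$ with the extremal rotation number; any weaker hypothesis producing such a representative would suffice, but there is no need to state that.
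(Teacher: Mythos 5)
Your proof is correct and is essentially identical to the paper's: both feed the Legendrian representative with $\tb(\mathcal{K})=r+1$, $\rot(\mathcal{K})=2\g_4(K)-2-r$ into Corollary~\ref{cor:shakeslicebennequin}, use~\eqref{eqn:rnotbig} to resolve the absolute value, and combine the resulting bound $\g_4(K)\leq\gsh^r(K)$ with the trivial inequality $\gsh^r(K)\leq\g_4(K)$.
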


\begin{proof}By definition, $K$ has a Legendrian representative $\mathcal{K}$ such that $\tb(\mathcal{K})= r+1$ and $\rot(\mathcal{K})=2\g_4(K)-1-r-1$. Note that since $\tb(\mathcal{K})-1= r$, we can use Corollary~\ref{cor:shakeslicebennequin} to see that  
$$\tb(\mathcal{K})+\lvert\rot(\mathcal{K})\rvert\leq 2\gsh^r(K)-1.$$
But we know by~\eqref{eqn:rnotbig} that $2\g_4(K)-1 \geq r+1$ since $K$ is $(r+1)$--suitable. As a result, we see that 
$$r+1+2\g_4(K)-1-r-1\leq 2\gsh^r(K)-1,$$
that is, 
$$\g_4(K)\leq \gsh^r(K).$$
Since $\gsh^r(K)\leq \g_4(K)$ for all knots,
\begin{equation}
\gsh^r(K)=\g_4(K).\qedhere
\end{equation}
\end{proof}

Using Remark~\ref{rem:suitableequalities}, we get the following immediate corollary. 

\begin{cor}If $K$ is $(r+1)$--suitable for $r+1\geq 0$, $K$ is not $r$--shake slice.\end{cor}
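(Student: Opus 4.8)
The statement to prove is: if $K$ is $(r+1)$--suitable for $r+1\geq 0$, then $K$ is not $r$--shake slice. The plan is to combine Lemma~\ref{lem:rshakegenusequality} with the positivity of the relevant invariants guaranteed by $r$--suitability, so that the shake genus is forced to be strictly positive. Concretely, since $K$ is $(r+1)$--suitable, Lemma~\ref{lem:rshakegenusequality} gives $\gsh^r(K)=\g_4(K)$, so it suffices to show $\g_4(K)\geq 1$, i.e.\ that $K$ is not slice.

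First I would invoke the consequences of $r$--suitability recorded in Remark~\ref{rem:suitableequalities}. By hypothesis $r+1\geq 0$, so $r\geq -1$, and in particular $r\geq 0$ or $r=-1$. In either case I want to conclude $\g_4(K)>0$. The cleanest route: since $K$ is $(r+1)$--suitable, it has a Legendrian representative $\mathcal{K}$ with $\tb(\mathcal{K})=r+1\geq 0$ and $\rot(\mathcal{K})=2\g_4(K)-1-(r+1)$, and by~\eqref{eqn:rnotbig} applied to the $(r+1)$--suitable knot $K$ we have $r+1\leq 2\g_4(K)-1$, hence $2\g_4(K)\geq r+2\geq 2$, so $\g_4(K)\geq 1$. (This is exactly the ``if $r\geq 0$, $K$ cannot be slice'' remark, here shifted so that $r+1\geq 0$ plays the role of the relevant parameter.)

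Then the conclusion is immediate: $\gsh^r(K)=\g_4(K)\geq 1>0$, so no $r$--shaking of $K$ bounds a genus zero surface in $B^4$, i.e.\ $K$ is not $r$--shake slice. I do not expect any genuine obstacle here; the only thing to be careful about is bookkeeping the shift between ``$r$--suitable'' and ``$(r+1)$--suitable'' and making sure the hypothesis $r+1\geq 0$ (rather than $r\geq 0$) is used correctly when citing~\eqref{eqn:rnotbig} and Lemma~\ref{lem:rshakegenusequality}. One could alternatively bypass Lemma~\ref{lem:rshakegenusequality} and argue directly from Corollary~\ref{cor:shakeslicebennequin} using the Legendrian representative $\mathcal{K}$ with $\tb(\mathcal{K})=r+1$ (so $\tb(\mathcal{K})-1=r\geq -1$, and $\tb(\mathcal{K})-1\geq r$ holds), obtaining $\tb(\mathcal{K})+|\rot(\mathcal{K})|\leq 2\gsh^r(K)-1$ and then, since $\tb(\mathcal{K})=r+1\geq 1$ forces the left side to be at least $1$, deducing $\gsh^r(K)\geq 1$; but routing through Lemma~\ref{lem:rshakegenusequality} is shorter, so that is the write-up I would give.
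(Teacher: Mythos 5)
Your argument is correct and is essentially the paper's own: the paper obtains this corollary immediately from Lemma~\ref{lem:rshakegenusequality} together with Remark~\ref{rem:suitableequalities} (an $s$--suitable knot with $s\geq 0$ cannot be slice), which is exactly your route of showing $\gsh^r(K)=\g_4(K)\geq 1$. One minor bookkeeping point: when $r+1=0$ your chain $2\g_4(K)\geq r+2\geq 2$ should read $2\g_4(K)\geq r+2\geq 1$, and integrality of $\g_4(K)$ then still yields $\g_4(K)\geq 1$, so the conclusion is unaffected.
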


\begin{prop}\label{prop:shakegenuscalculation} Let $P$ be any winding number one pattern with a Legendrian diagram $\mathcal{P}$ such that 
$$\tb(\mathcal{P})=1 \text{ and } 0<\g_4(P)\leq \frac{1+\rot(\mathcal{P})}{2}.$$
Then, for any $(r+1)$--suitable knot $K$ and each $i\geq 0$, 
$$\gsh^r(P^i_r(K))=\gsh^r(K)+i\cdot\g_4(P).$$
\end{prop}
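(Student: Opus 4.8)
The plan is to mimic the structure of the proof of Proposition~\ref{prop:distinctiterates}, but now tracking shake genus instead of slice genus, using Corollary~\ref{cor:shakeslicebennequin} (the $r$--shake slice--Bennequin inequality) in place of the ordinary slice--Bennequin inequality for the lower bound, and Proposition~\ref{prop:satelliteimpliesshake1n} together with Proposition~\ref{prop:easytransitivity} (subadditivity of genus under gluing shake concordances) for the upper bound. First I would establish the base case: since $K$ is $(r+1)$--suitable, Lemma~\ref{lem:rshakegenusequality} gives $\gsh^r(K)=\g_4(K)$, so the $i=0$ case is immediate. The whole proposition will then follow by induction on $i$ once I show two things for every $(r+1)$--suitable knot $K$: (a) $P_r(K)$ is again $(r+1)$--suitable, and (b) $\gsh^r(P_r(K))=\gsh^r(K)+\g_4(P)$.

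For (a), I would apply Lemma~\ref{lem:iteratesuitable} with the parameter $m=1$: the hypothesis $\tb(\mathcal{P})=1$ and $0<\g_4(P)\le \frac{1+\rot(\mathcal{P})}{2}$ is exactly the $m=1$ instance of that lemma's hypothesis, so $P_r(K)$ is $(r+1)$--suitable and moreover $\g_4(P_r(K))=\g_4(K)+\g_4(P)$. This simultaneously handles the induction step for suitability and, via Lemma~\ref{lem:rshakegenusequality} applied to the $(r+1)$--suitable knot $P^i_r(K)$, tells us that $\gsh^r(P^i_r(K))=\g_4(P^i_r(K))=\g_4(K)+i\cdot\g_4(P)=\gsh^r(K)+i\cdot\g_4(P)$. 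So in fact the whole statement collapses to: iterate Lemma~\ref{lem:iteratesuitable} with $m=1$, then invoke Lemma~\ref{lem:rshakegenusequality} at each stage. The upper bound via Proposition~\ref{prop:satelliteimpliesshake1n} is then not even needed, since Lemma~\ref{lem:iteratesuitable} already pins down $\g_4$ exactly and $\gsh^r$ agrees with $\g_4$ on suitable knots.

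The one point that requires care — and which I expect to be the main (minor) obstacle — is checking that the Legendrian satellite diagram $\mathcal{P}(\mathcal{K})$ used inside Lemma~\ref{lem:iteratesuitable} can genuinely be realized here, i.e. that the hypotheses on $\mathcal{P}$ in the present proposition line up exactly with the $m=1$ hypotheses of Lemma~\ref{lem:iteratesuitable}; this is a direct substitution but worth stating explicitly. A secondary subtlety is that Lemma~\ref{lem:rshakegenusequality} is stated for $(r+1)$--suitable knots, so one must confirm at each stage of the induction that the knot $P^i_r(K)$ remains $(r+1)$--suitable (not merely $r$--suitable) before reapplying it — but this is exactly what the $m=1$ conclusion of Lemma~\ref{lem:iteratesuitable} provides, so the induction closes cleanly.

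In summary, the proof I would write is: by Lemma~\ref{lem:iteratesuitable} with $m=1$, induction on $i$ shows that each $P^i_r(K)$ is $(r+1)$--suitable and $\g_4(P^i_r(K))=\g_4(K)+i\cdot\g_4(P)$; by Lemma~\ref{lem:rshakegenusequality}, $\gsh^r(P^i_r(K))=\g_4(P^i_r(K))$ for every $i\ge 0$ (including $i=0$), and therefore
$$\gsh^r(P^i_r(K))=\g_4(K)+i\cdot\g_4(P)=\gsh^r(K)+i\cdot\g_4(P),$$
as claimed.
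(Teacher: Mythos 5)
Your argument is correct and is essentially identical to the paper's proof: the paper likewise applies the $m=1$ case of Lemma~\ref{lem:iteratesuitable} to conclude that each $P^i_r(K)$ is $(r+1)$--suitable with $\g_4(P^i_r(K))=\g_4(K)+i\cdot\g_4(P)$, and then invokes Lemma~\ref{lem:rshakegenusequality} to identify $\gsh^r$ with $\g_4$ at every stage. Your observation that no separate upper-bound argument via Proposition~\ref{prop:satelliteimpliesshake1n} is needed matches the paper's (shorter) write-up exactly.
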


\begin{proof} By the case $m=1$ of Lemma~\ref{lem:iteratesuitable}, $P^i_r(K)$ is $(r+1)$--suitable  and $\g_4(P^i_r(K))=\g_4(K)+i\cdot\g_4(P)$ for each $i\geq 0$. By Lemma~\ref{lem:rshakegenusequality},  $\g_4(K)=\gsh^r(K)$ and $\gsh^r(P^i_r(K))=\g_4(P^i_r(K))$ for each $i$. This completes the proof.\end{proof}

\begin{cor}\label{cor:mazurshakegenuscalculation}Let $P$ denote the Mazur pattern. Then for any $(r+1)$--suitable knot~$K$, 
$$\gsh^r(P^i_r(K))=\gsh^r(K)+i,$$
for all $i\geq 0$.
\end{cor}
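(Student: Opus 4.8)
The plan is to derive this as an immediate special case of Proposition~\ref{prop:shakegenuscalculation}. The only thing that needs to be verified is that the Mazur pattern admits a Legendrian diagram $\mathcal{P}$ satisfying the hypotheses of that proposition, namely $\tb(\mathcal{P})=1$ and $0<\g_4(P)\leq \frac{1+\rot(\mathcal{P})}{2}$.

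First I would recall from Remark~\ref{rem:4genusofmazur} that the Mazur pattern $P$ has $\g_4(P)=1$, so the strict inequality $0<\g_4(P)$ holds and the remaining requirement becomes $1\leq \frac{1+\rot(\mathcal{P})}{2}$, i.e. $\rot(\mathcal{P})\geq 1$. Next I would point to Figure~\ref{fig:legmazur}(b), which exhibits a Legendrian diagram $\mathcal{P}'$ for the Mazur pattern with $\tb(\mathcal{P}')=1$ and $\rot(\mathcal{P}')=1$ (obtained by one positive stabilization from the diagram $\mathcal{P}$ in Figure~\ref{fig:legmazur}(a), which has $\tb=2$, $\rot=0$). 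This diagram plainly satisfies both hypotheses of Proposition~\ref{prop:shakegenuscalculation}: $\tb(\mathcal{P}')=1$ and $0<1=\g_4(P)\leq \frac{1+1}{2}=1$.

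Then I would simply invoke Proposition~\ref{prop:shakegenuscalculation} with this choice of $\mathcal{P}'$ and with $\g_4(P)=1$, which yields, for any $(r+1)$--suitable knot $K$ and each $i\geq 0$,
\[
\gsh^r(P^i_r(K))=\gsh^r(K)+i\cdot\g_4(P)=\gsh^r(K)+i,
\]
as claimed. There is no real obstacle here: the entire content is bookkeeping, checking that the Legendrian data for the Mazur pattern fit the general statement, exactly parallel to how Corollary~\ref{cor:mazurdistinctiterates} was deduced from Proposition~\ref{prop:distinctiterates} using the diagram $\mathcal{P}''$ in Figure~\ref{fig:legmazur}(c). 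If anything merits a sentence of care, it is confirming that the stabilized diagram in Figure~\ref{fig:legmazur}(b) indeed represents the Mazur pattern (not $\widetilde{P}$ as a knot in $S^3$) and has the stated invariants, but this is immediate from the construction of $\mathcal{P}'$ by positive stabilization from $\mathcal{P}$.
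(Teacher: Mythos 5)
Your proposal is correct and is essentially identical to the paper's own proof: both deduce the result from Proposition~\ref{prop:shakegenuscalculation} by exhibiting the Legendrian diagram $\mathcal{P}'$ of Figure~\ref{fig:legmazur}(b) with $\tb(\mathcal{P}')=1$, $\rot(\mathcal{P}')=1$, and using $\g_4(P)=1$ from Remark~\ref{rem:4genusofmazur}. Your write-up simply spells out the arithmetic check $0<1\leq\frac{1+1}{2}$ a bit more explicitly than the paper does.
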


\begin{proof}Figure \ref{fig:legmazur}(b) shows a Legendrian diagram $\mathcal{P}'$ for the Mazur pattern. Since $\g_4(P)=1$, we see that $\mathcal{P}'$ satisfies the requirements of Proposition \ref{prop:shakegenuscalculation}. The result follows.\end{proof}

\begin{cor}\label{cor:shakegenusnotinvariant}$r$--shake genus is not an invariant of $r$--shake concordance, for any integer $r$. \end{cor}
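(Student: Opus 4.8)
The plan is to combine the examples constructed earlier with the shake-genus computations just established. Concretely, I would fix an integer $r$ and, by Remark~\ref{rem:rsuitableprops2}, choose an $(r+1)$--suitable knot $K$ (for $r=0$ one may even take $K=\Wh(RHT)$, which is topologically slice). Let $P$ denote the Mazur pattern. The proof of Theorem~\ref{thm:examples} already shows that the knots $\{P^j_r(K)\mid j\geq 0\}$ are pairwise $r$--shake concordant: indeed $P^{i+1}_r(K)=P_r(P^i_r(K))$ is $(1,n)$ $r$--shake concordant to $P^i_r(K)$ with $n$ the geometric winding number of $P$, so by Proposition~\ref{prop:easytransitivity} any two members of this family are $r$--shake concordant.

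Next I would invoke Corollary~\ref{cor:mazurshakegenuscalculation}, which applies precisely because $K$ is $(r+1)$--suitable, to conclude that $\gsh^r(P^i_r(K))=\gsh^r(K)+i$ for all $i\geq 0$. Hence the $r$--shake genera of the knots in this family are all distinct. Since $P^0_r(K)=K$ and, say, $P^1_r(K)=P_r(K)$ are $r$--shake concordant but satisfy $\gsh^r(P_r(K))=\gsh^r(K)+1\neq \gsh^r(K)$, the $r$--shake genus is not preserved by $r$--shake concordance. To phrase it as an invariant statement: there is no function of the $r$--shake concordance class that recovers $\gsh^r$, since two $r$--shake concordant knots can have different values.

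This is essentially immediate given the machinery in place, so there is really no substantial obstacle; the only point requiring a moment's care is to make sure the hypotheses line up—namely that the chosen $K$ is $(r+1)$--suitable (not merely $r$--suitable) so that Corollary~\ref{cor:mazurshakegenuscalculation} is applicable, and that the $r$--shake concordances used are exactly the $(1,n^{j-i})$ ones produced in the proof of Theorem~\ref{thm:examples}. One could also remark that for $r=0$ these examples can be taken topologically slice, strengthening the corollary slightly, though that is not needed for the bare statement.

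\begin{proof}[Proof of Corollary~\ref{cor:shakegenusnotinvariant}] Fix an integer $r$ and, using Remark~\ref{rem:rsuitableprops2}, choose an $(r+1)$--suitable knot $K$. Let $P$ be the Mazur pattern. As shown in the proof of Theorem~\ref{thm:examples}, $P_r(K)=P^1_r(K)$ is $(1,n)$ $r$--shake concordant to $K=P^0_r(K)$, where $n$ is the geometric winding number of $P$; in particular $K$ and $P_r(K)$ are $r$--shake concordant. On the other hand, by Corollary~\ref{cor:mazurshakegenuscalculation}, $\gsh^r(P_r(K))=\gsh^r(K)+1>\gsh^r(K)$. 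Thus $r$--shake genus takes different values on the $r$--shake concordant knots $K$ and $P_r(K)$, so it is not an invariant of $r$--shake concordance. \end{proof}
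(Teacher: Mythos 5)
Your proof is correct and follows essentially the same route as the paper: the Mazur iterates $\{P^i_r(K)\}$ are pairwise $r$--shake concordant (via Proposition~\ref{prop:satelliteimpliesshake1n} and Proposition~\ref{prop:easytransitivity}), while Corollary~\ref{cor:mazurshakegenuscalculation} shows their $r$--shake genera differ. If anything, you are slightly more careful than the paper's one-line argument, since you explicitly choose $K$ to be $(r+1)$--suitable (via Remark~\ref{rem:rsuitableprops2}) so that Corollary~\ref{cor:mazurshakegenuscalculation} genuinely applies, rather than reusing the merely $r$--suitable knot from the proof of Theorem~\ref{thm:examples}.
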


\begin{proof}Recall from the proof of Theorem \ref{thm:examples} that the knots $\{P^i_r(K) \mid i\geq 0\}$, for the Mazur pattern $P$, are pairwise $r$--shake concordant. Therefore, Corollary~\ref{cor:mazurshakegenuscalculation} shows that $r$--shake genus is not an invariant of $r$--shake concordance, for $r\geq 0$.\end{proof}

Our characterization of $r$--shake slice and $r$--shake concordant knots, Theorem \ref{thm:main}, allows us to find relationships between the $r$--shake genera of a knot and its winding number one satellites as follows. 

\begin{prop}\label{prop:shakegenusincreases}For any knot $K$, any integer $r$, and any winding number one pattern $P$ with $\widetilde{P}$ slice,
$$\gsh^r(K)\leq \gsh^r(P_r(K)).$$\end{prop}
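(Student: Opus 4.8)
The plan is to deduce this from the characterization theorem together with the genus-additivity bound already proved in Lemma~\ref{lem:iteratesuitable}, but more directly by unpacking what an $r$--shake genus realizing surface for $P_r(K)$ gives us. First I would recall the setup of Proposition~\ref{prop:satelliteimpliesshake1n}: since $\widetilde{P}$ is slice, there is a genus zero surface $\Sigma$ inside $ST\times[0,1]$ cobounded by $P\hookrightarrow ST\times\{0\}$ and a $0$--shaking of the core of $ST\times\{1\}$. Under the embedding $f_r\colon ST\hookrightarrow S^3$ defining the $r$--twisted satellite, $\Sigma$ maps to a genus zero surface in $S^3\times[0,1]$ cobounded by $P_r(K)$ and an $r$--shaking of $K$. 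In other words, $P_r(K)$ is $(1,n)$ $r$--shake concordant to $K$, where $n$ is the geometric winding number of $P$; this is exactly the content of Proposition~\ref{prop:satelliteimpliesshake1n}, which I may cite directly.

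Next I would take an $r$--shake genus realizing surface for $P_r(K)$: a smooth, properly embedded, connected surface $G\subseteq B^4$ with $\partial G$ an $r$--shaking of $P_r(K)$ and $\g(G)=\gsh^r(P_r(K))$. The idea is to cap off the $r$--shaking of $P_r(K)$ at the boundary of $B^4$ using the genus zero shake concordance from $P_r(K)$ to $K$ supplied above. Concretely, take $B^4 = S^3\times[0,1]\cup B^4$ (thicken the boundary collar), and glue to $G$ the appropriate number of algebraically-one parallel copies (in the sense of Remark~\ref{rem:parallelcopies}) of the genus zero $(1,n)$ $r$--shake concordance $S_1$ between $P_r(K)$ and $K$, one copy for each of the boundary components of the $r$--shaking of $P_r(K)$. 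Since each copy of $S_1$ has genus zero, and gluing surfaces along circles and then connecting components does not increase genus beyond the sum of the genera (as used repeatedly in Propositions~\ref{prop:easytransitivity},~\ref{prop:1mshakeimpliessatellite} and Corollary~\ref{cor:bettersatelliteimpliesshake}), the resulting surface $G'\subseteq B^4$ is connected, has $\partial G'$ an $r$--shaking of $K$, and $\g(G')\le \g(G)=\gsh^r(P_r(K))$. Hence $\gsh^r(K)\le \g(G')\le \gsh^r(P_r(K))$, which is the claim.

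Alternatively, and perhaps more cleanly for a short writeup, I would phrase the whole argument at the level of $4$--manifolds and spheres: from $G$ build a connected surface of the same genus in $W^r_{P_r(K)}$ representing a generator (deleting handle-core disks as in the proof of the equivalence of definitions), then note that $W^r_{P_r(K)}$ can be glued to the trace of the shake concordance $S_1$ (a cobordism rel the trivial handle attachments) to produce $W^r_K$ with a connected generator-representing surface of no larger genus. Either way the key input is Proposition~\ref{prop:satelliteimpliesshake1n} providing a \emph{genus zero} cobounding surface.

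The main obstacle I anticipate is bookkeeping rather than geometry: making sure the orientations and the number of parallel copies of $S_1$ match the components of $\partial G$ so that after gluing one really gets an $r$--shaking of $K$ (not some other multiple), and that connecting up the possibly-disconnected result to a single connected surface costs no genus. This is handled exactly as in Remark~\ref{rem:parallelcopies} and the proof of Corollary~\ref{cor:bettersatelliteimpliesshake}, so I would simply invoke those. One subtlety to state carefully: the genus of a connected sum (along arcs/tubes) of a genus $g$ surface with several genus zero surfaces is still $g$, so there is genuinely no loss; I would remark on this once rather than belabor it.
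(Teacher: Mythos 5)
Your argument is correct and is essentially the paper's own proof: both take a genus-minimizing surface bounded by an $r$--shaking of $P_r(K)$ and glue on algebraically-one parallel copies (Remark~\ref{rem:parallelcopies}) of the genus zero $(1,n)$ $r$--shake concordance from Proposition~\ref{prop:satelliteimpliesshake1n} to obtain a surface of the same genus bounded by an $r$--shaking of $K$. The bookkeeping points you flag (orientations, connectedness, no genus gained) are handled exactly as you indicate, so nothing further is needed.
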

\begin{proof} By Proposition \ref{prop:satelliteimpliesshake1n}  there is a $(1,\,n)$ $r$--shake concordance $C$ from $P_r(K)$ to $K$. Consider a surface $\Sigma$ with $\g(\Sigma)=\gsh^r(P_r(K))$ with boundary an $m$--component $r$--shaking of $P_r(K)$. By gluing on $m$ copies of $C$ (algebraically one) to $\Sigma$, using Remark~\ref{rem:parallelcopies}, we get a surface of genus $\gsh^r(P_r(K))$ bounded by an $mn$--component $r$--shaking of $K$, completing the proof.\end{proof}

We can easily see that the above is true more generally, i.e.\ if $J$ is $(1,\,n)$ $r$--shake concordant to $K$, then $\gsh^r(K)\leq\gsh^r(J)$. Of course, by Proposition~\ref{prop:1mshakeimpliessatellite}, such a $J$ must be concordant to an $r$--twisted satellite of $K$ with companion a winding number one pattern $P$ with $\widetilde{P}$ ribbon. 

Using Proposition \ref{prop:shakegenusincreases} and Corollaries \ref{cor:tauzero} and \ref{cor:arfzero}, we see the following. 

\begin{cor}\label{cor:taunochance} Fix a knot $K$. If $\tau(K)\neq 0$ then $P(K)$ is not 0--shake slice (and therefore, is not slice), for \textit{any} winding number one pattern $P$ with $\widetilde{P}$ slice. 

If $\Arf(K)\neq 0$ then $P_r(K)$ is not $r$--shake slice (and therefore, is not slice) for \textit{any} $r$ and for \textit{any} winding number one pattern $P$ with $\widetilde{P}$ slice. \end{cor}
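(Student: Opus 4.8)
The plan is to reduce both assertions to facts already assembled in the excerpt, using Proposition~\ref{prop:shakegenusincreases} to push genus/concordance obstructions from $P_r(K)$ down to $K$. For the first statement, suppose $\tau(K)\neq 0$ but $P(K)$ is $0$--shake slice for some winding number one pattern $P$ with $\widetilde{P}$ slice. By Corollary~\ref{cor:tauzero}, $\tau(P(K))=0$. On the other hand, $P(K)=P_0(K)$, so Proposition~\ref{prop:shakegenusincreases} gives $\gsh^0(K)\leq\gsh^0(P(K))=0$, i.e.\ $K$ is $0$--shake slice, whence $\tau(K)=0$ again by Corollary~\ref{cor:tauzero}, a contradiction. (Alternatively, and perhaps more cleanly, I would note that if $P(K)$ is $0$--shake slice then $K$ is $(1,n)$ $0$--shake concordant to $P(K)$ is $0$--shake concordant to the unknot, so $K$ itself is $0$--shake slice by the transitivity in Proposition~\ref{prop:easytransitivity}, forcing $\tau(K)=0$.) This handles the $\tau$ half with no new work.

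For the second (Arf) statement the argument is essentially identical, using Corollary~\ref{cor:arfzero} in place of Corollary~\ref{cor:tauzero} and working at framing $r$ rather than $0$. Suppose $\Arf(K)\neq 0$ but $P_r(K)$ is $r$--shake slice for some winding number one pattern $P$ with $\widetilde{P}$ slice. By Proposition~\ref{prop:satelliteimpliesshake1n}, $K$ is $(1,n)$ $r$--shake concordant to $P_r(K)$, and by Proposition~\ref{prop:shakesliceandconcordant} the latter is $(m,1)$ $r$--shake concordant to the unknot; composing via Proposition~\ref{prop:easytransitivity} shows $K$ is $(m', 1)$ $r$--shake concordant to the unknot, i.e.\ $K$ is $r$--shake slice. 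Then Corollary~\ref{cor:arfzero} gives $\Arf(K)=0$, contradicting the hypothesis. In both cases the final implication ``not $0$--shake slice $\Rightarrow$ not slice'' is immediate since slice knots are $r$--shake slice.

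I do not anticipate a genuine obstacle here: every ingredient—the behavior of $\gsh^r$ under satellites, the vanishing of $\tau$ and $\Arf$ for shake slice knots, and the composition properties of shake concordance—has already been established in the excerpt, so the proof is a short bookkeeping argument. The only point requiring a little care is making sure the chain of $(p,q)$ shake concordances composes correctly (one must land with a ``$1$'' in the slot adjacent to the unknot so that Proposition~\ref{prop:easytransitivity} applies); this is the reason I would route through Proposition~\ref{prop:satelliteimpliesshake1n}, which supplies precisely a $(1,n)$ shake concordance from $P_r(K)$ to $K$, and hence a $(1,n)$ shake concordance from $K$ to $P_r(K)$ by symmetry.
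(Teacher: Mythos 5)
Your proposal is correct and takes essentially the same route as the paper: the paper also combines Corollary~\ref{cor:tauzero} (resp.\ Corollary~\ref{cor:arfzero}) with the genus monotonicity $\gsh^r(K)\leq\gsh^r(P_r(K))$ of Proposition~\ref{prop:shakegenusincreases}, and your shake-concordance composition for the Arf half is just an unwinding of that same fact. One small bookkeeping point: symmetry converts the $(1,n)$ $r$--shake concordance from $P_r(K)$ to $K$ into an $(n,1)$ --- not $(1,n)$ --- $r$--shake concordance from $K$ to $P_r(K)$, which is exactly the orientation needed to apply Proposition~\ref{prop:easytransitivity} toward the unknot.
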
 

\begin{proof}Since $\tau(K)\neq 0$, $K$ cannot be 0--shake slice by Corollary~\ref{cor:tauzero}, i.e.\ $\gsh^0(K)>0$. By Proposition~\ref{prop:shakegenusincreases}, $\gsh^0(P(K))>0$.  The second statement follows similarly, since if $\Arf(K)\neq 0$, $K$ is not $r$--shake slice, i.e.\ $\gsh^r(K)>0$, for any $r$, by Corollary~\ref{cor:arfzero}. \end{proof}

Indeed, if $P(K)$ is 0--shake slice then it is slice in a homology 4--ball by Proposition~\ref{prop:shakeconchomcob}. Then, by ~\cite[Corollary 4.2 ($n=1$)]{CDR14}, $K$ is also slice in a homology ball. 

\begin{prop}\label{prop:shakedecreaseslice}Fix an integer $r$, and a knot $K$. There exists a winding number one pattern $P$ with $\widetilde{P}$ slice, such that $\g_4(P_r(K)) < \g_4(K)$ if and only if $\gsh^r(K)<\g_4(K)$.  Equivalently, $\gsh^r(K)=\g_4(K)$ if and only if $\g_4(P_r(K))\geq\g_4(K)$ for all winding number one operators $P$ with $\widetilde{P}$ slice.\end{prop}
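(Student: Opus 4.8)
The plan is to prove the two directions of the equivalence separately, using the characterization of $r$--shake genus via satellites that is implicit in Sections~\ref{sec:characterization} and~\ref{sec:shakegenus}. First I would observe that the two displayed statements are contrapositives of one another, so it suffices to prove the first biconditional: there exists a winding number one pattern $P$ with $\widetilde{P}$ slice and $\g_4(P_r(K)) < \g_4(K)$ if and only if $\gsh^r(K) < \g_4(K)$.

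For the reverse direction, suppose $\gsh^r(K) < \g_4(K)$. Then there is a smooth properly embedded genus $g = \gsh^r(K)$ surface $\Sigma \subseteq B^4$ whose boundary is an $r$--shaking of $K$. The key step is to run the machinery of Proposition~\ref{prop:1mshakeimpliessatellite} (and its corollary, Corollary~\ref{cor:shakeimpliessatellite}), but keeping track of genus rather than just genus zero: cutting $B^4$ near a point of $\Sigma$ exhibits the $r$--shaking of $K$ as cobounding, with the unknot, a surface of genus $g$ in $S^3 \times [0,1]$; applying the Morse-theoretic rearrangement and the string-link concordance argument of Proposition~\ref{prop:1mshakeimpliessatellite} then shows that the unknot (hence a slice knot) is concordant to $P_r(K)$ for some winding number one pattern $P$ with $\widetilde{P}$ ribbon, \emph{and} that the concordance, together with $\Sigma$, assembles into a genus $g$ surface in $B^4$ with boundary $P_r(K)$. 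Hence $\g_4(P_r(K)) \leq g = \gsh^r(K) < \g_4(K)$. (Since $\widetilde{P}$ ribbon implies $\widetilde{P}$ slice, this $P$ is of the required type.)

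For the forward direction, suppose $P$ is a winding number one pattern with $\widetilde{P}$ slice and $\g_4(P_r(K)) < \g_4(K)$. By Proposition~\ref{prop:satelliteimpliesshake1n}, $P_r(K)$ is $(1,n)$ $r$--shake concordant to $K$ via some genus zero surface $C$ in $S^3 \times [0,1]$. Take a genus $\g_4(P_r(K))$ slice surface $\Sigma_0$ for $P_r(K)$ in $B^4$ and glue $C$ (with $n$ algebraically-one parallel copies as needed, exactly as in Proposition~\ref{prop:shakegenusincreases}) onto $\Sigma_0$; since $C$ has genus zero, the result is a genus $\g_4(P_r(K))$ surface in $B^4$ whose boundary is an $r$--shaking of $K$. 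Therefore $\gsh^r(K) \leq \g_4(P_r(K)) < \g_4(K)$.

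I expect the main obstacle to be the bookkeeping in the reverse direction: verifying that the genus of $\Sigma$ is genuinely preserved (not increased) when one performs the Morse rearrangement of $F = \Sigma \setminus (\text{small ball})$, passes to the connected level set $J'$, replaces the string link $S$ by its concordance, and finally recognizes the middle level as $P_r(K)$ — i.e., that all the modifications in the proof of Proposition~\ref{prop:1mshakeimpliessatellite} are genus-preserving cobordisms rather than merely genus-zero-preserving ones, and that concordances can be glued on without cost. This is routine but requires care, since the original proof was written only for the genus zero case. An alternative, cleaner route for the reverse direction would be to simply remark that the entire argument of Corollary~\ref{cor:shakeimpliessatellite}/Proposition~\ref{prop:1mshakeimpliessatellite} goes through verbatim with ``genus zero surface'' replaced by ``genus $g$ surface'' everywhere, yielding directly that $K$ being $(m,1)$-cobordant-via-genus-$g$-surface to the unknot is equivalent to $\g_4(P_r(K)) \leq g$ for some winding number one $P$ with $\widetilde{P}$ ribbon; the proposition is then immediate.
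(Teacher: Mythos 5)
Your forward direction is correct and is essentially the paper's: the paper quotes Proposition~\ref{prop:shakegenusincreases} (which is itself proved by exactly the gluing you describe), while you glue the $(1,n)$ $r$--shake concordance of Proposition~\ref{prop:satelliteimpliesshake1n} directly onto a minimal genus slice surface for $P_r(K)$; either way one gets $\gsh^r(K)\leq\g_4(P_r(K))<\g_4(K)$. The observation that the two displayed statements are contrapositives is also fine.

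The reverse direction is where there is a genuine problem. As literally written, your first formulation is false: the argument of Proposition~\ref{prop:1mshakeimpliessatellite} cannot show that ``the unknot is concordant to $P_r(K)$'' --- that would say $P_r(K)$ is slice, i.e.\ that $K$ is $r$--shake slice, which certainly does not follow from $\gsh^r(K)<\g_4(K)$. Your fallback --- that the proof of Proposition~\ref{prop:1mshakeimpliessatellite} goes through ``verbatim'' with genus $g$ in place of genus zero --- is exactly the point that needs an argument rather than an assertion: that proof uses genus zero in two places, namely that the piece of the surface above the connected middle level $J'$ is an annulus (so that $J$ is concordant to $J'$), and that the piece below is a fusion of the $r$--shaking with a trivial link, carrying no genus. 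Once the surface has genus you must decide and control where the genus sits, which is an extra Morse-rearrangement statement, not a word-for-word repetition. The paper's proof is structured precisely to avoid generalizing Proposition~\ref{prop:1mshakeimpliessatellite}: it isotopes the genus-minimizing surface $\Sigma$ for the $r$--shaking of $K$ so that its intersection with a collar of $\partial B^4$ is a connected, genus zero $(1,m)$ $r$--shake concordance from a knot $J$ (the connected level $\Sigma\cap (S^3\times\{\epsilon\})$) to $K$, with all of the genus pushed into the interior piece bounded by $J$. Then Proposition~\ref{prop:1mshakeimpliessatellite}, exactly as stated, gives $J$ concordant to $P_r(K)$ with $\widetilde{P}$ ribbon, and the interior piece yields $\g_4(P_r(K))=\g_4(J)\leq\g(\Sigma)=\gsh^r(K)<\g_4(K)$. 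That is the concrete repair your sketch needs: feed only a genus zero sub-cobordism into the satellite-recognition argument, and let the genus stay on the $\g_4(J)$ side.
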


\begin{proof}The forward direction follows immediately from Proposition~\ref{prop:shakegenusincreases} since $\gsh^r(J)\leq\g_4(J)$ for all knots $J$. 

For the backwards direction, consider a surface $\Sigma\subseteq B^4$ bounded by an $m$--component $r$--shaking of $K$ with genus $\gsh^r(K)$. By a small isotopy we can assume that the radial function on $B^4$ is Morse when restricted to $\Sigma$. We can then further assume that  all the local maxima and all the ``join'' saddles occur in an $\epsilon$ collar of $\partial B^4$ so that $\Sigma\cap (S^3\times\{\epsilon\})$ is a connected 1--manifold $J$, and $\Sigma\cap (S^3\times [0,\epsilon])$ is a $(1,\,m)$ $r$--shake concordance from $J$ to $K$. By Proposition \ref{prop:1mshakeimpliessatellite}, $J$ is concordant to $P_r(K)$ for some winding number one pattern $P$ with $\widetilde{P}$ slice. This shows that 
\begin{equation}\label{eq:inproof}
\g_4(P_r(K))=\g_4(J)\leq\g(\Sigma)=\gsh^r(K) < \g_4(K). 
\end{equation}
Moreover since $\g_4(P_r(K))< \g_4(K)$, $P$ is not the trivial pattern even modulo concordance in $ST\times [0,1]$. \end{proof}

\begin{cor}Fix an integer $r$, and a knot $K$. If $\gsh^r(K)<\g_4(K)$ then there exists a  pattern $P$, \textit{non-trivial even modulo concordance}, with winding number one and $\widetilde{P}$ slice such that $\gsh^r(P_r(K))=\g_4(P_r(K))=\gsh^r(K)$. \end{cor}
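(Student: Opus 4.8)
The plan is to essentially reread the proof of Proposition~\ref{prop:shakedecreaseslice}, but to keep track of the pattern $P$ that it produces and to verify that it realizes equality in the chain of inequalities~\eqref{eq:inproof}. Concretely, since $\gsh^r(K)<\g_4(K)$, the backwards-direction argument of Proposition~\ref{prop:shakedecreaseslice} already hands us a winding number one pattern $P$ with $\widetilde{P}$ slice together with a knot $J$, concordant to $P_r(K)$, such that
$$\g_4(P_r(K))=\g_4(J)\leq \g(\Sigma)=\gsh^r(K)<\g_4(K),$$
where $\Sigma\subseteq B^4$ is a genus-$\gsh^r(K)$ surface bounded by an $r$--shaking of $K$. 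In particular this same argument records that $P$ is nontrivial even modulo concordance in $ST\times[0,1]$, since $P_r(K)$ would otherwise be concordant to $K$.

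The remaining point is to upgrade the inequality $\g_4(P_r(K))\leq \gsh^r(K)$ to an equality, and simultaneously to show $\gsh^r(P_r(K))$ equals both of these. First I would invoke Proposition~\ref{prop:shakegenusincreases} applied to this $P$: because $\widetilde{P}$ is slice, $\gsh^r(K)\leq \gsh^r(P_r(K))$. Second, combine this with the universal inequality $\gsh^r(P_r(K))\leq \g_4(P_r(K))$ (a genus-minimizing slice surface is in particular a $1$--component $r$--shaking surface), obtaining $\gsh^r(K)\leq \gsh^r(P_r(K))\leq \g_4(P_r(K))$. Third, feed in the inequality $\g_4(P_r(K))\leq \gsh^r(K)$ coming from~\eqref{eq:inproof}. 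The three inequalities now pinch shut:
$$\gsh^r(K)\leq \gsh^r(P_r(K))\leq \g_4(P_r(K))\leq \gsh^r(K),$$
forcing $\gsh^r(P_r(K))=\g_4(P_r(K))=\gsh^r(K)$, which is exactly the asserted conclusion, with the nontriviality of $P$ already in hand from Proposition~\ref{prop:shakedecreaseslice}.

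I do not anticipate a genuine obstacle here — the statement is really a bookkeeping corollary that repackages the proof of Proposition~\ref{prop:shakedecreaseslice} rather than a new argument. The only thing to be slightly careful about is citing the right inputs in the right direction: Proposition~\ref{prop:satelliteimpliesshake1n} / Proposition~\ref{prop:shakegenusincreases} give the ``$\gsh^r$ does not decrease under $\widetilde{P}$--slice satellites'' inequality, Proposition~\ref{prop:1mshakeimpliessatellite} supplies the pattern $P$ from a Morse-theoretic decomposition of $\Sigma$, and the defining inequality $\gsh^r\leq\g_4$ closes the loop; no step requires anything beyond what is already proved in the excerpt. If one wanted, one could also phrase the whole thing in one line as: apply Proposition~\ref{prop:shakedecreaseslice} to get $P$, then note $\gsh^r(K)\le\gsh^r(P_r(K))\le\g_4(P_r(K))\le\gsh^r(K)$.
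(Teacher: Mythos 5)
Your proposal is correct and follows essentially the same route as the paper: take the pattern $P$ produced in the proof of Proposition~\ref{prop:shakedecreaseslice}, and pinch the chain $\gsh^r(K)\leq \gsh^r(P_r(K))\leq \g_4(P_r(K))\leq \gsh^r(K)$ using Proposition~\ref{prop:shakegenusincreases}, the inequality $\gsh^r\leq\g_4$, and~\eqref{eq:inproof}, with nontriviality of $P$ noted exactly as in that proof. No gaps.
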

\begin{proof} Let $P$ be pattern obtained in the proof of the above proposition. We have
$$\gsh^r(K)\leq \gsh^r(P_r(K))\leq g_4(P_r(K))\leq \gsh^r(K),$$
using Proposition \ref{prop:shakegenusincreases} for the first inequality and ~\eqref{eq:inproof} for the third inequality. Thus the inequalities are equalities. $P$ is non-trivial even modulo concordance by the last line of the previous proof.
\end{proof}

%======================================================
\section{Examples of $r$--shake slice knots}\label{sec:oldexamples}

\begin{figure}[t]
\includegraphics[width=5.5in]{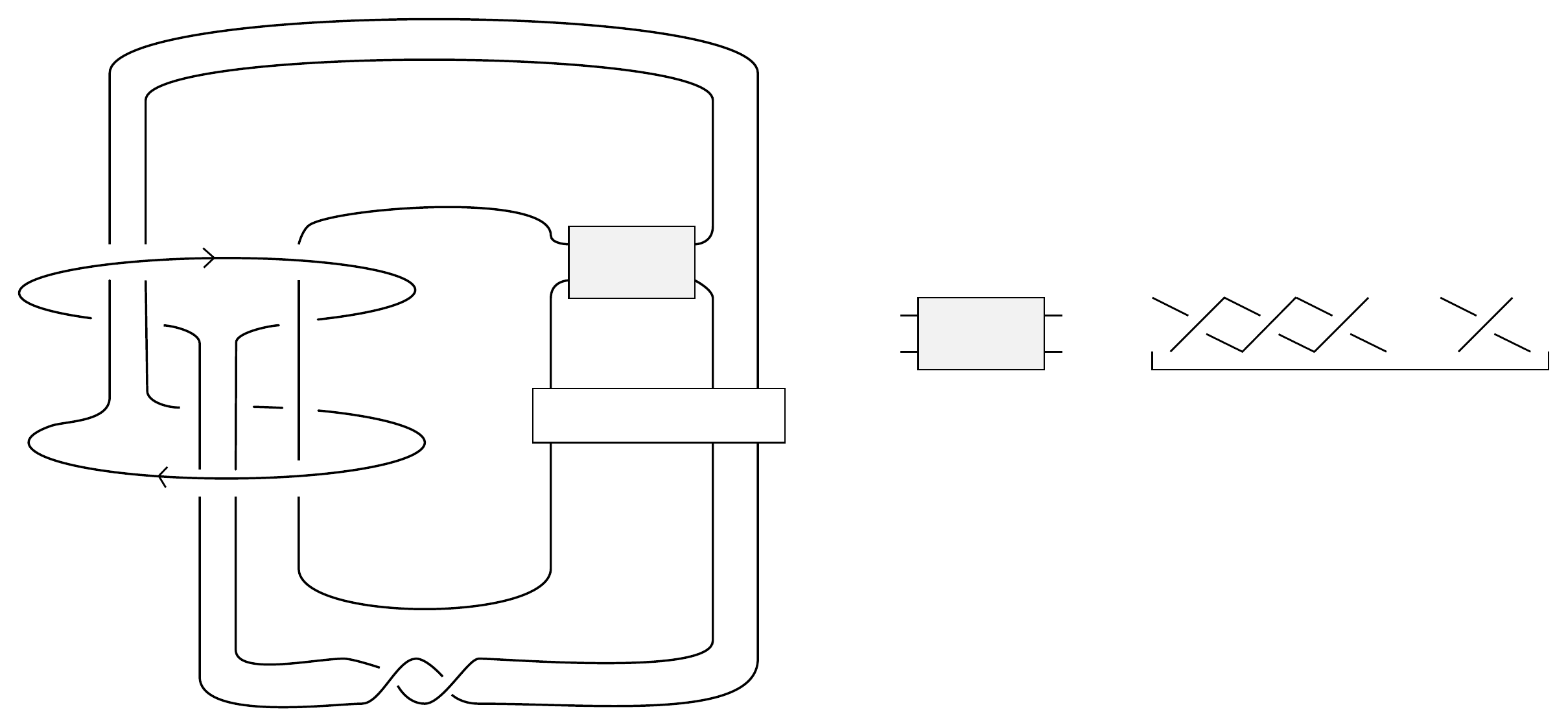}
\put(-3.3,1.05){$r$}
\put(-3.45,1.6){\tiny $2m +1$}
\put(-2.225,1.35){\tiny $2m +1$}
\put(-1.7, 1.35){=}
\put(-1.3,1.1){\small $2m+1$ half-twists}
\put(-0.625,1.35){$\cdots$}
\caption{The knots $K_{r,\,m}$ of Akbulut~\cite{Ak77} and Abe--Jong--Omae--Takeuchi~\cite{AbeJongOmaeTake13}. The strands passing through the box containing $2m+1$ should be given $2m+1$ left-handed half twists as shown above.}\label{fig:prevexamples}
\end{figure}

As we mentioned in Section 1, there are examples of $r$--shake slice knots, for $r\neq 1$, due to Akbulut~\cite{Ak77,Ak93} and Abe--Jong--Omae--Takeuchi~\cite{AbeJongOmaeTake13}. The latter's examples are shown in Figure~\ref{fig:prevexamples} (these examples generalize the ones from~\cite{Ak93}, which are in turn generalizations of the 1--shake slice example in~\cite{Ak77}; the fact that Akbulut's 1--shake slice example in~\cite{Ak77} is of this form was also shown by Lickorish in~\cite{Lic79}). They showed that for any $m\geq 0$ and $r\neq 0$, the knot $K_{r,\,m}$ is $r$--shake slice. In~\cite{Lic79}, Lickorish gave an alternate proof of why $K_{1,\,0}$ is 1--shake slice; in fact, his explanation can be easily modified to apply to $K_{r,\,m}$ for all $r\neq 0$ and $m\geq 0$, as he himself asserted in~\cite[Remarks 2 and 3]{Lic79}. 

Since the knots $K_{r,\,m}$ are $r$--shake slice, there must exist winding number one patterns $P^{(m)}$ with $\widetilde{P^{(m)}}$ slice, for which $P^{(m)}_r(K_{r,\,m})$ is a slice knot, by Corollary~\ref{cor:characterizeshakeslice}. However, our proof for Corollary~\ref{cor:characterizeshakeslice} does not give an explicit construction of such a $P^{(m)}$. Below we show that we can explicitly construct such satellite operators, modulo the smooth 4--dimensional Poincar\'{e} Conjecture.

\begin{figure}[b]
\includegraphics[width=\textwidth]{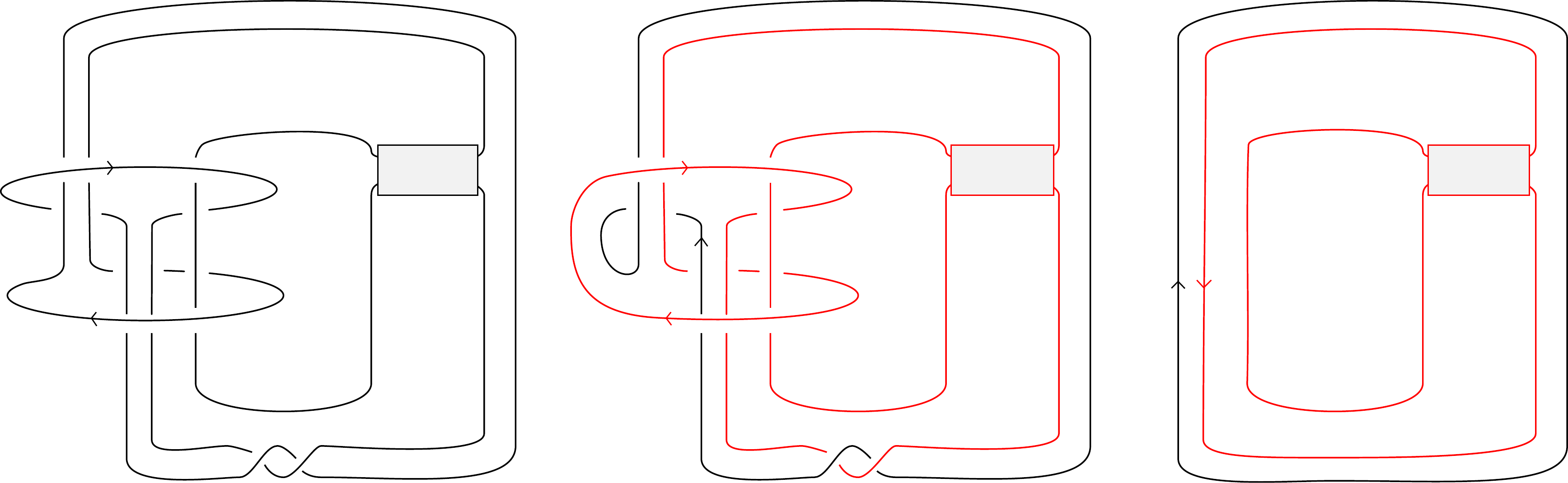}
\put(-0.475,1.075){\tiny $2m +1$}
\put(-2.15,1.075){\tiny $2m +1$}
\put(-4.1725,1.075){\tiny $2m +1$}
\caption{The knot $K_{0,\,m}$ is ribbon. Left: the knot $K_{0,\,m}$. Center: the result of attaching a band (a ribbon move). Right: a further isotopy shows a 2--component unlink.}\label{fig:K0mribbon}
\end{figure}

Firstly, note that the knot $K_{0,\,m}$ is ribbon, as shown in Figure~\ref{fig:K0mribbon}. Consider the family of winding number one patterns $R^{(m)}$, $m\geq 0$, shown in Figure \ref{fig:prevexamplepattern}, where $\widetilde{R^{(m)}}$ is clearly $K_{0,\,m}$. Then it is easy to see that $R^{(m)}_r(U)=K_{r,\,m}$, where $U$ is the unknot. Therefore, we are seeking patterns $P^{(m)}$ such that $P^{(m)}_r(R^{(m)}_r(U))$ is concordant to $U$ (i.e.\ is slice.). 

\begin{figure}[t]
\includegraphics[width=4in]{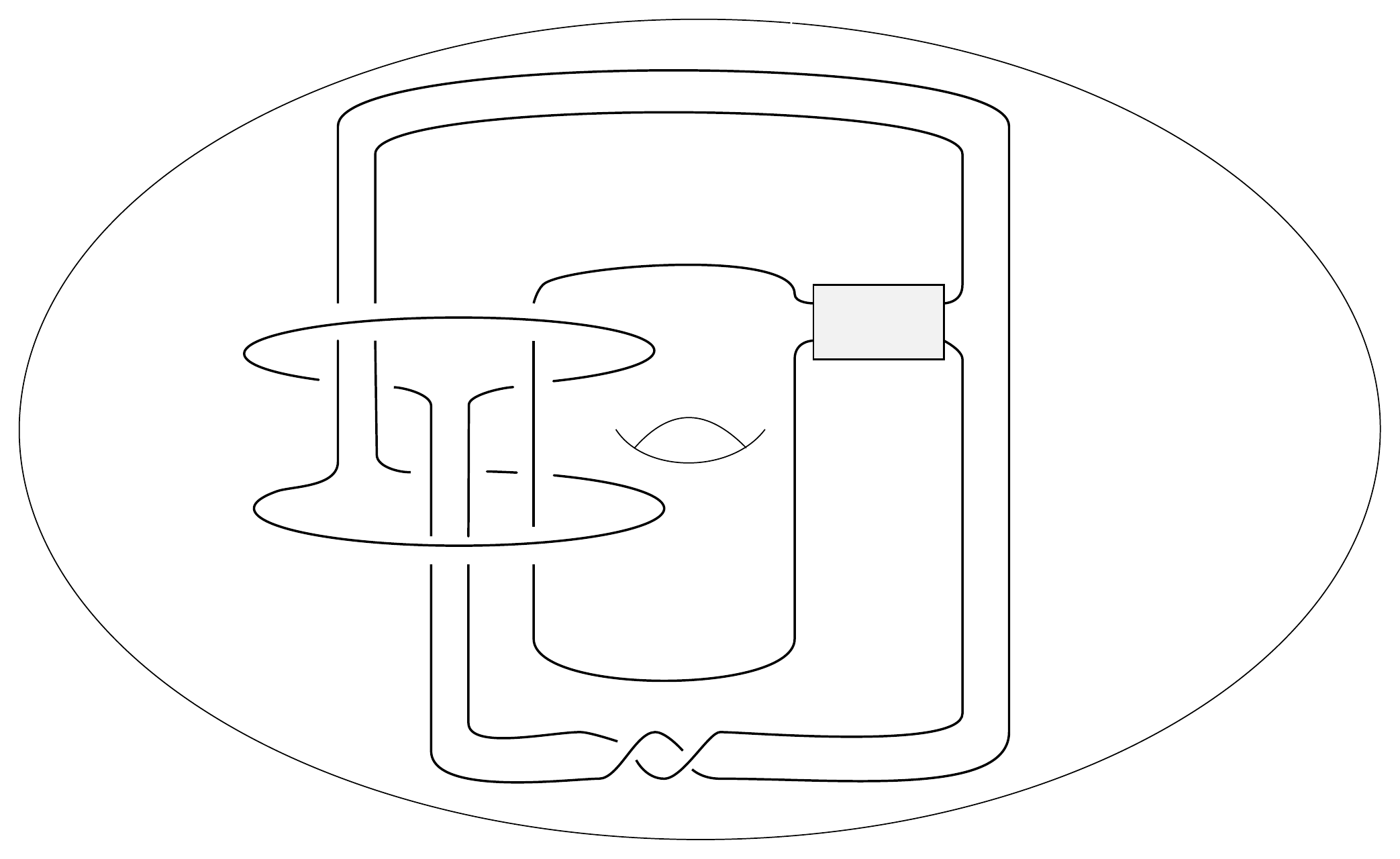}
\put(-1.66,1.5){\tiny $2m +1$}
\caption{A family of winding number one patterns $R^{(m)}$, $m\geq 0$, with $\widetilde{R^{(m)}}=K_{0,\,m}$. }\label{fig:prevexamplepattern}
\end{figure}

Our main tool will be the results of~\cite[Section 3]{DR13}, for which we recall some notions from~\cite{DR13}. Recall that there is a well-defined notion of composing two patterns $P$ and $Q$, contained in standard solid tori $V_P$ and $V_Q$: loosely speaking, we drill out a regular neighborhood of $Q$ in $V_Q$ and glue in $V_P$ in an untwisted manner; the image of $P$ in this new manifold, which can be seen to be a solid torus (denoted $V_{P\star Q}$), is the composed pattern $P\star Q$. The set of isotopy classes of patterns forms a monoid under this operation, and moreover, this composition has the handy property that $(P\star Q)(K)=P(Q(K))$ for all knots $K$, i.e.\ the classical untwisted satellite operation is a monoid action by the monoid of isotopy classes of patterns on the set of isotopy classes of knots. (Further details can be found in~\cite[Section 2]{DR13}, as well as the proof of the following proposition.) In fact, such a relationship is sometimes true for twisted satellites as well, as we see in the following proposition; we postpone the proof to the end of this section.

\begin{prop}\label{prop:iteratesorcompose}Let $P$ and $Q$ be patterns with winding number $w(P)$ and $w(Q)$ respectively, and $r$ an integer. For any knot $K$, the iterated twisted satellite $P_r(Q_r(K))$ is isotopic to the twisted satellite $(P\star Q)_r(K)$ if and only if $w(Q)=\pm 1$ or $r=0$. \end{prop}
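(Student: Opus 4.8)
The statement concerns comparing two a priori different constructions: the iterated twisted satellite $P_r(Q_r(K))$ and the twisted satellite $(P\star Q)_r(K)$. The plan is to write both as the result of gluing solid tori according to explicit gluing maps on the boundary tori, and then to compare these maps. Recall that the $r$--twisted satellite $P_r(K)$ is obtained by removing a tubular neighborhood $N(K)$ of $K$ from $S^3$ and regluing the solid torus $V_P$ (containing $P$) so that the meridian $\mu_{V_P}$ of $V_P$ goes to the meridian $\mu_K$ of $K$ and the longitude $\lambda_{V_P}$ of $V_P$ goes to $\lambda_K + r\mu_K$, where $\lambda_K$ is the Seifert-framed longitude. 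First I would set up this bookkeeping carefully, fixing bases $(\mu,\lambda)$ for $H_1$ of each relevant torus and expressing every gluing map as a $2\times2$ integer matrix.

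**Key steps.** First, compute the gluing data for $Q_r(K)$: this embeds $V_Q$ into $S^3$ with the core of $V_Q$ becoming a knot whose framing relative to $V_Q$'s longitude is twisted by $r$. Second, compute the gluing data for $P_r(\,\cdot\,)$ applied to the knot $Q_r(K)$: crucially, the longitude of $V_P$ now maps to $\lambda_{Q_r(K)} + r\mu_{Q_r(K)}$, where $\lambda_{Q_r(K)}$ is the Seifert longitude of $Q_r(K)$ as a knot in $S^3$ — and this Seifert longitude differs from the ``obvious'' longitude coming from $V_Q$ by a correction term involving $w(Q)^2$ times the twisting, by standard satellite framing formulas (the Seifert framing of a satellite picks up $w^2$ times the companion's framing change). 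Third, compute the gluing data for $(P\star Q)_r(K)$ directly: here $V_{P\star Q}$ is $V_P$ glued into $V_Q$ in the \emph{untwisted} manner, and then the composite solid torus is reglued into $S^3$ with a single $r$--twist relative to the Seifert longitude of $K$. Finally, compare: the two constructions agree exactly when the accumulated twisting matches, and the discrepancy is governed by a factor of $(w(Q)^2 - w(Q))$ or $(w(Q)-1)$ times $r$ acting on the relevant meridian, which vanishes precisely when $w(Q) = \pm 1$ (note $w(Q)^2 = w(Q)$ forces $w(Q)\in\{0,1\}$, but one must also handle $w(Q)=-1$ by an orientation-reversal argument, or the relevant quantity is $w(Q)^2 = 1$) or when $r = 0$. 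I would also verify the ``only if'' direction by exhibiting, for any $w(Q) \neq \pm 1$ and $r \neq 0$, a knot $K$ (e.g.\ one with nontrivial Alexander polynomial, or simply by a homology computation of the exteriors) for which the two resulting knot exteriors are not homeomorphic, hence the knots are not isotopic.

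**Main obstacle.** The hard part will be keeping the framing/longitude conventions consistent across the three constructions — in particular, correctly tracking how the Seifert longitude of a twisted satellite is expressed in terms of the meridian and the ``pattern longitude,'' since this is where the $w(Q)^2$ (equivalently, the winding-number-squared self-linking correction) enters and ultimately produces the $w(Q) = \pm1$ dichotomy. A clean way to organize this is to do the entire computation on the level of the peripheral tori and their preferred bases, reducing the whole comparison to an identity (or failure thereof) between two products of $2\times 2$ matrices in $GL_2(\mathbb{Z})$; once set up correctly the algebra is short. For the ``only if'' direction the cleanest argument is probably to compute $H_1$ of the twisted satellite exteriors (or the linking form / Alexander module) and observe the dependence on $r$ and $w(Q)$, showing the invariants disagree unless the stated condition holds.
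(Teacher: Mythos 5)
Your plan follows essentially the same route as the paper's proof: both build the two satellites by gluing pattern exteriors along peripheral tori, track meridians and longitudes, use the fact that the Seifert longitude of an $r$--twisted satellite picks up a correction of $r\,w(Q)^2$ times the meridian, and conclude that the two gluings differ exactly by $r\,(w(Q)^2-1)$ (your hedge about $w(Q)^2-w(Q)$ versus $w(Q)^2-1$ resolves to the latter), so agreement holds precisely when $w(Q)=\pm1$ or $r=0$. The only difference is cosmetic: you phrase the bookkeeping via $2\times 2$ matrices and propose verifying the ``only if'' direction with homological invariants, whereas the paper simply observes directly that the gluing identifications and untwisted longitudes coincide if and only if $w(Q)^2=1$ or $r=0$.
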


Note that the patterns $R^{(m)}$  shown in Figure~\ref{fig:prevexamplepattern} have winding number one. Moreover, these patterns actually have \textit{inverses} by the following theorem from~\cite{DR13}. 

\begin{thm}[Theorem 3.4 of~\cite{DR13}]\label{thm:honestinverse}Let $P$ be a winding number one pattern contained in a solid torus $V$. If the meridian of $P$ is in the subgroup of $\pi_1(V-N(P))$ normally generated by the meridian of $V$ then there exists another winding number one pattern $\overline{P}$ such that the composed pattern $\overline{P}\star P$ is concordant  to the trivial pattern (namely the core of a solid torus) in a possibly exotic copy of $S^1\times D^2 \times [0,1]$. \end{thm}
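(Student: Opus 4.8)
The plan is to write down $\overline{P}$ explicitly as a ``reversal'' of $P$, and then to build the required concordance by a $4$--dimensional cut--and--paste argument in which the hypothesis on $\mu_P$ is used exactly once, to control $H_*$ and $\pi_1$ of the resulting cobordism. The reason the conclusion only gives a \emph{possibly exotic} copy of $S^1\times D^2\times[0,1]$ is that in dimension $4$ one cannot expect to cancel the handles that would identify the ambient manifold with an honest product.

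First I would set up the infection description of a winding number one pattern. Writing $V=S^3\setminus N(\eta)$, where $\eta$ is the meridian of $V$, the pair $(V,P)$ is the same data as a two--component link $\eta\cup\widetilde{P}\subset S^3$ with $\lk(\eta,\widetilde{P})=1$, and for every knot $K$ the satellite $P(K)$ is obtained from $\widetilde{P}$ by deleting $N(\eta)$ and gluing in the exterior of $K$ along $0$--framings. In particular the exterior $E_P=V\setminus N(P)$ is $S^3\setminus N(\eta\cup\widetilde{P})$, a compact $3$--manifold with two torus boundary components.

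Next I would construct $\overline{P}$. The hypothesis that $\mu_P$ lies in the normal closure of the meridian of $V$ in $\pi_1(V\setminus N(P))$ says exactly that $\mu_P$ becomes trivial once one Dehn fills $\partial V$ along that meridian; this is precisely the condition under which the corresponding filling of $E_P$ is again a solid torus $\overline{V}$, inside which there is a natural winding number one curve $\overline{P}$ arising from $\eta$. A homological computation using $\lk(\eta,\widetilde{P})=1$ gives $w(\overline{P})=\pm1$ and exhibits $\widetilde{\overline{P}}$ as the required ribbon knot. Unwinding the definition of $\star$ then shows that the exterior, inside its ambient solid torus $V$, of the composite $\overline{P}\star P$ is a union of two copies of $E_P$ identified along a common boundary torus --- the ``double'' of $E_P$ along that torus --- whereas the exterior of the trivial pattern is just $T^2\times[0,1]$.

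The heart of the matter is to show that $\overline{P}\star P$ is concordant to the core. I would build a smooth $4$--manifold $W$, rel the product region $\partial V\times[0,1]$, cobounding the double of $E_P$ and $T^2\times[0,1]$, by doubling $E_P\times[0,1]$ along the appropriate $T^2\times[0,1]$ in its boundary. A Mayer--Vietoris argument --- with $\lk(\eta,\widetilde{P})=1$ doing the homological bookkeeping and the hypothesis on $\mu_P$ guaranteeing that the extra homology created by the identification dies --- shows that $W$ is a homology $S^1\times D^2\times[0,1]$ rel boundary, and a van Kampen computation with the same input gives $\pi_1(W)\cong\mathbb{Z}$. One then locates a properly embedded annulus $A\subset W$ joining the two core curves and checks that $W\setminus N(A)$ is a homology product cobordism on $T^2\times[0,1]$ rel the corner; Freedman's topological surgery (the fundamental group $\mathbb{Z}$ being good) identifies $W$ --- hence $W\setminus N(A)$ --- with $S^1\times D^2\times[0,1]$ up to homeomorphism but not necessarily diffeomorphism, so that $A$ realizes the desired concordance from $\overline{P}\star P$ to the trivial pattern inside a possibly exotic copy of $S^1\times D^2\times[0,1]$. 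The main obstacle will be this last step --- producing the annulus $A$ and certifying its complement --- since it is not formal: it needs the simple--connectivity supplied by the $\mu_P$--hypothesis together with honest $4$--manifold handle manipulations, and it is the pattern--theoretic shadow of the familiar fact that a homology concordance need not be an actual concordance without extra hypotheses.
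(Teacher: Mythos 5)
This statement is not proved in the paper at all: it is imported verbatim as Theorem~3.4 of~\cite{DR13}, so your sketch has to be judged against that source and on its own merits. It fails at its first substantive step. The hypothesis that $\mu_P$ lies in the normal closure of $\mu_V$ in $\pi_1(V-N(P))$ does imply that $\mu_P$ becomes null-homotopic in the Dehn filling of $E_P=V-N(P)$ along $\mu_V$; but that filling is the exterior of the image of $P$ in $S^1\times S^2$ (a homology $S^1\times D^2$), and killing the single element $\mu_V$ normally does not make its fundamental group infinite cyclic. So your claim that the hypothesis is ``precisely the condition under which the filling is again a solid torus'' is false -- for the Mazur pattern, for instance, the filling is not a solid torus, even though the pattern satisfies the hypothesis. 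With it falls the definition of $\overline{P}$ as ``$\eta$ inside the filled solid torus'' and the identification of $E(\overline{P}\star P)$ with a double of $E_P$. Indeed, if the filling were always a solid torus one would obtain an honest $3$--dimensional inverse, far stronger than the theorem; the entire reason the conclusion is only ``concordant in a possibly exotic $S^1\times D^2\times[0,1]$'' is that no such $3$--dimensional identification exists.

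The $4$--dimensional half does not close the gap. The double of $E_P\times[0,1]$ along $T^2\times[0,1]$ has fundamental group the amalgamated product $\pi_1(E_P)\ast_{\mathbb{Z}^2}\pi_1(E_P)$, which surjects onto $\pi_1(E_P)$ and is essentially never $\mathbb{Z}$; your van Kampen claim cannot be rescued by the hypothesis on $\mu_P$, because your construction attaches no handle along $\mu_V$ (or anything else), so nothing in $\pi_1$ gets killed. Finally, the step you yourself flag as the main obstacle -- producing the smoothly embedded annulus $A$ and certifying that the ambient smooth manifold is homeomorphic to $S^1\times D^2\times[0,1]$ -- is exactly the content of the theorem, and it is left unconstructed; Freedman's theorem can only be invoked after one has in hand a smooth $4$--manifold that is a homology product with $\pi_1\cong\mathbb{Z}$ containing the candidate annulus. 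That is how the argument in~\cite{DR13} actually runs: the inverse pattern $\overline{P}$ is written down explicitly by handle slides over the meridian of $V$ (this is the picture the present paper points to as Remark~3.6 and Figure~8 of~\cite{DR13}), the normal-generation hypothesis is used to compute the fundamental group of a $4$--manifold built by explicit handle attachments, and only then does Freedman's topological surgery (with $\mathbb{Z}$ a good group) identify that manifold topologically, but not necessarily smoothly, with $S^1\times D^2\times[0,1]$ -- whence the ``possibly exotic'' caveat. Your sketch gestures at this outline but supplies neither the correct construction of $\overline{P}$, nor a $4$--manifold with the asserted $\pi_1$, nor the annulus.
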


Let $V_{R^{(m)}}$ denote the solid torus containing the pattern $R^{(m)}$. To ensure that each pattern $R^{(m)}$ satisfies the requirements of the above theorem, it suffices to show that the meridian of $R^{(m)}$ is nullhomotopic in the 3--manifold $N$ obtained from $V_{R^{(m)}}-N(R^{(m)})$ by adding a 2--handle to the meridian of $V_{R^{(m)}}$. The result of sliding $R^{(m)}$ over this 2--handle twice (isotopies in $N$) is depicted in  Figure~\ref{fig:proofofsurjex}. In the result of the isotopy, the meridian of $R^{(m)}$ cobounds an annulus with the meridian of $V_{R^{(m)}}$ and so bounds a disk in $N$.  

\begin{figure}[t]
        \centering
        \begin{subfigure}[t]{0.32\textwidth}
        \centering
		\includegraphics[width=1.55in]{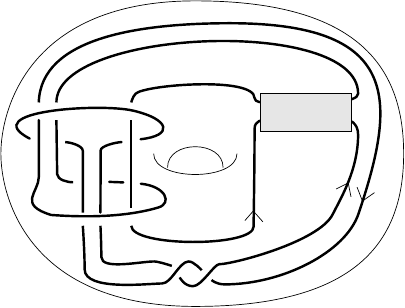}
		\put(-0.55,0.72){\tiny $2m+1$}
               	 \caption*{The satellite operator $R_m$}
        \end{subfigure}%
        \hspace{5pt}
        \begin{subfigure}[t]{0.32\textwidth}
        	      \centering
	      \includegraphics[width=1.55in]{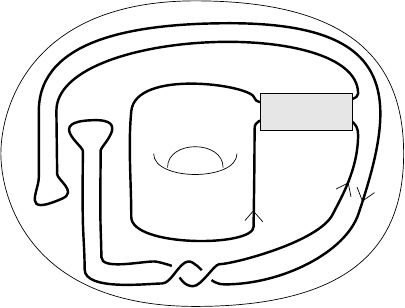}
	      \put(-0.55,0.72){\tiny $2m+1$}
               \caption*{The result of sliding $R_m$ over the meridian of $V_{R^{(m)}}$}
        \end{subfigure}%
        \hspace{5pt}
        \begin{subfigure}[t]{0.32\textwidth}
        	      \centering
	     \includegraphics[width=1.55in]{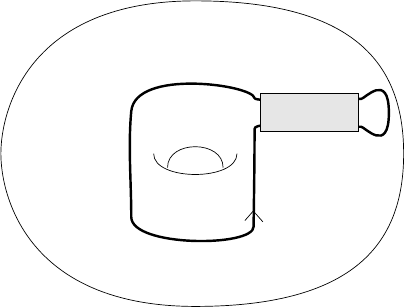}
	      \put(-0.55,0.72){\tiny $2m+1$}
               \caption*{The result of a further isotopy}
        \end{subfigure}%
        \caption{The patterns $R_m$ from Figure~\ref{fig:prevexamplepattern} satisfy the requirements of Theorem~\ref{thm:honestinverse}}\label{fig:proofofsurjex}
\end{figure}

Then, by using Theorem~\ref{thm:honestinverse}, there exist winding number one patterns $P^{(m)}$ such that $(P^{(m)}\star R^{(m)})_r(K)$ is concordant to $K$ in a possibly exotic $S^3\times [0,1]$ for any knot $K$ and integer $r$, since the trivial pattern (even if twisted) acts trivially on the set of knot concordance classes. However, by Proposition~\ref{prop:iteratesorcompose} we know that $(P^{(m)}\star R^{(m)})_r(K)$ is also isotopic to $P^{(m)}_r(R^{(m)}_r(K))$ for any knot $K$, since each $R^{(m)}$ is winding number one. In particular, this shows that for $U$ the unknot and $K_{r,\,m}$ the knots of Akbulut and Abe--Jong--Omae--Takeuchi, 
$$P^{(m)}_r(K_{r,\,m}) = P^{(m)}_r(R^{(m)}_r(U)) = (P^{(m)}\star R^{(m)})_r(U)$$ 
where the last is known to be concordant to $U$, in a possibly exotic $S^3\times [0,1]$, i.e.\ is slice in a possibly exotic $B^4$.  

We now check that $\widetilde{P^{(m)}}$ slice, for all $m\geq 0$. Recall that $\widetilde{R^{(m)}}=R^{(m)}(U)$ is slice, and as a result, $\widetilde{P^{(m)}}=P^{(m)}(U)$ is concordant to $P^{(m)}(R^{(m)}(U))$ which we know to be isotopic to $(P^{(m)}\star R^{(m)})(U)$. By Theorem~\ref{thm:honestinverse} $(P^{(m)}\star R^{(m)})(U)$ is slice, in a possibly exotic $B^4$, for all $m\geq 1$.

\begin{rem}We can explicitly draw the patterns $P^{(m)}$ used above. This is shown in Remark 3.6 and Figure 8 of~\cite{DR13}.\end{rem}

\begin{rem}If we assume the smooth 4--dimensional Poincar\'{e} Conjecture, then any manifold $\mathcal{B}$ which is a possibly exotic copy of $B^4$ is in fact diffeomorphic to $B^4$ -- see~\cite[Proof of Proposition~3.2]{CDR14} for a proof.\end{rem}

\begin{rem} Note that Akbulut's example of a 2--shake slice knot that is not slice (from~\cite{Ak77}) does not appear to belong to the family shown in Figure~\ref{fig:prevexamples}, at least at first glance. \end{rem}

\subsection{New examples of shake slice knots}
Theorem~\ref{thm:honestinverse}, along with Proposition~\ref{prop:iteratesorcompose} and our characterization theorem, Theorem~\ref{thm:main}, also gives us a way to construct possible new examples of shake slice knots, as follows. Let $P$ be a pattern, with $\widetilde{P}$ slice, that satisfies the conditions of Theorem~\ref{thm:honestinverse}. Then, as before, modulo the smooth 4--dimensional Poincar\'{e} Conjecture, there exists another winding number one pattern $\overline{P}$ such that $(\overline{P}\star P)_r(K)$ is concordant to $K$ for any knot $K$ and any integer $r$. Then consider the knot $P_r(U)$ for any integer $r$ and $U$ the unknot. We have that $\overline{P}_r(P_r(U))$ is concordant to $(\overline{P}\star P)_r(U)$, which we know is concordant to $U$, that is, is slice. Moreover, $\widetilde{\overline{P}}$ is slice, since $\widetilde{\overline{P}}=\overline{P}(U)$ is concordant to $\overline{P}(P(U))$, which in turn is concordant to $(\overline{P}\star P)(U)$, which we know is slice as before. Then, by our characterization theorem, Theorem~\ref{thm:main}, we see that the knot $P_r(U)$ is $r$--shake slice. 

Therefore, we have proved the following proposition. 

\begin{prop}\label{prop:newshakeslice}Let $P$ be a winding number one pattern in a solid torus $V$, such that $\widetilde{P}$ is slice, and the meridian of $P$ is in the subgroup of $\pi_1(V-N(P))$ normally generated by the meridian of $V$. Then for any integer $r$, the knot $P_r(U)$ is $r$--shake slice, where $U$ is the unknot, modulo the smooth 4--dimensional Poincar\'{e} Conjecture.\end{prop}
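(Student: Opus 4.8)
The plan is to assemble the proposition directly from the three machines already established: Theorem~\ref{thm:honestinverse} (existence of an ``inverse'' pattern $\overline{P}$), Proposition~\ref{prop:iteratesorcompose} (iterated twisted satellites compose when the inner pattern has winding number $\pm1$), and the characterization Theorem~\ref{thm:main}. First I would invoke Theorem~\ref{thm:honestinverse} using the two hypotheses on $P$: it is winding number one, and its meridian lies in the normal closure of the meridian of $V$. This produces a winding number one pattern $\overline{P}$ with $\overline{P}\star P$ concordant to the trivial (core) pattern inside a possibly exotic $S^1\times D^2\times[0,1]$. Modulo the smooth $4$--dimensional Poincar\'e Conjecture, as noted in the remark following the discussion of the Akbulut examples, this exotic cobordism is standard, so $\overline{P}\star P$ is concordant to the trivial pattern in the honest $S^1\times D^2\times[0,1]$, and hence $(\overline{P}\star P)_r(L)$ is concordant to $L$ for every knot $L$ and every integer $r$, since the twisted trivial pattern acts as the identity on concordance classes.

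Next I would use Proposition~\ref{prop:iteratesorcompose}: since $P$ has winding number one, $(\overline{P}\star P)_r(L)$ is isotopic to $\overline{P}_r(P_r(L))$ for any $L$ and any $r$. Specializing to $L=U$, the unknot, we get that $\overline{P}_r(P_r(U))$ is concordant to $(\overline{P}\star P)_r(U)$, which is concordant to $U$, i.e.\ slice. So $P_r(U)$ has a winding number one satellite by $\overline{P}$ that is slice.

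It then remains to check the ribbon/slice hypothesis needed to apply Corollary~\ref{cor:characterizeshakeslice} (or equivalently Theorem~\ref{thm:main} comparing $P_r(U)$ to the unknot): namely that $\widetilde{\overline{P}}=\overline{P}(U)$ is slice. Here I would run the same composition argument in the \emph{untwisted} setting, where it is cleanest: $\overline{P}(U)$ is concordant to $\overline{P}(P(U))$ since $\widetilde{P}=P(U)$ is slice (hence concordant to $U$) and untwisted satellite operators are well-defined on concordance; and $\overline{P}(P(U))=(\overline{P}\star P)(U)$ by the classical composition law for untwisted satellites, which is known to be slice (in a possibly exotic, hence, under the Poincar\'e Conjecture, standard $B^4$) by Theorem~\ref{thm:honestinverse}. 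Thus $\widetilde{\overline{P}}$ is slice, and applying Corollary~\ref{cor:characterizeshakeslice} with the witnessing pattern $\overline{P}$ gives that $P_r(U)$ is $r$--shake slice, completing the proof.

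I expect the only genuine subtlety — and the main obstacle — to be bookkeeping the exotic versus standard smooth structures: every application of Theorem~\ref{thm:honestinverse} introduces a ``possibly exotic'' $B^4$ or $S^1\times D^2\times[0,1]$, and the conclusion of Corollary~\ref{cor:characterizeshakeslice} as stated is about honest sliceness in $B^4$, so the statement must carry the ``modulo the smooth $4$--dimensional Poincar\'e Conjecture'' hypothesis, exactly as in the proposition. One should be careful that Proposition~\ref{prop:iteratesorcompose} gives an honest \emph{isotopy} (no smooth-structure issue there), so all the exoticness is confined to the two invocations of Theorem~\ref{thm:honestinverse}, and is dispatched uniformly by that hypothesis; no other step should require it.
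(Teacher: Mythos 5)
Your proposal is correct and follows essentially the same route as the paper: invoke Theorem~\ref{thm:honestinverse} to get the inverse pattern $\overline{P}$, use Proposition~\ref{prop:iteratesorcompose} (valid since $P$ has winding number one) to identify $\overline{P}_r(P_r(U))$ with $(\overline{P}\star P)_r(U)$, check $\widetilde{\overline{P}}$ is slice by the untwisted version of the same argument, and conclude via the characterization theorem, with the Poincar\'e Conjecture hypothesis absorbing the possibly exotic $B^4$'s exactly as you describe.
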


\begin{rem}Unfortunately, the above proposition does not yield any new examples of 0--shake slice knots since if $r=0$, $P_r(U)$ is just the knot $\widetilde{P}$ which is slice by hypothesis.\end{rem}

\begin{figure}[b]
\includegraphics{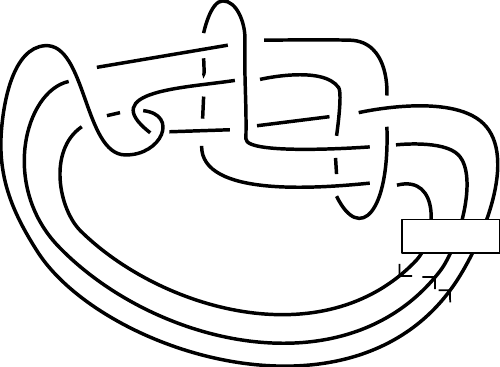}
\put(-0.25,0.48){$r$}
\caption{Here $r$ is any integer. For any fixed value of $r$, the knot pictured is $r$--shake slice.}\label{fig:newshakeslice}
\end{figure} 

For example, by using Proposition~\ref{prop:newshakeslice}, we can see that the knots given in Figure~\ref{fig:newshakeslice} are $r$--shake slice in a possibly exotic $B^4$. To do so, we need to verify that the winding number one pattern $P$ corresponding to these knots, shown in Figure~\ref{fig:verifynewshakeslice}(a), satisfies the conditions given in Proposition~\ref{prop:newshakeslice}. That $\widetilde{P}$ is slice can be seen by attaching bands; we leave this to the reader. That the meridian of $P$ is in the subgroup of $\pi_1(V-N(P))$ normally generated by the meridian of $V$, the solid torus containing $P$, can be seen via the pictures in Figure~\ref{fig:verifynewshakeslice}, which show that the meridian of $P$ bounds a disk in the manifold obtained from $V-N(P)$ by attaching a 2--handle along the meridian of $V$.

\begin{figure}[t]
\includegraphics{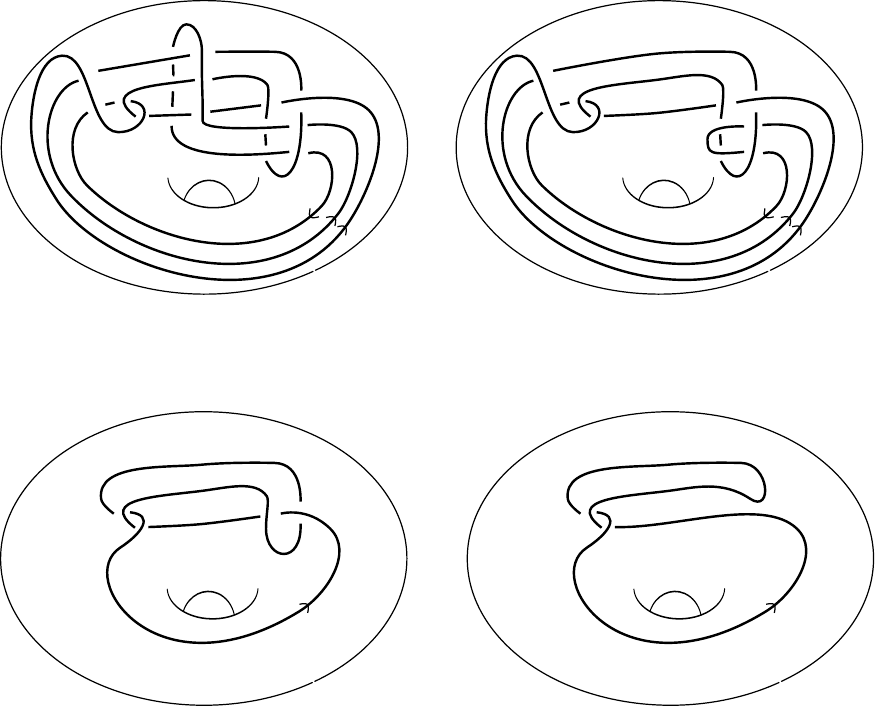}
\put(-2.75,1.45){(a)}
\put(-0.95,1.45){(b)}
\put(-2.75,-0.2){(c)}
\put(-0.95,-0.2){(d)}
\caption{Verifying that the knots in Figure~\ref{fig:newshakeslice} are shake slice. (a): The pattern $P$ corresponding to the knots in Figure~\ref{fig:newshakeslice}. (b): The result of sliding $P$ once over the meridian of $V$. (c): The result of an isotopy. (d): By sliding $P$ over the meridian of $V$ again, we obtain the core of the solid torus.}\label{fig:verifynewshakeslice}
\end{figure}

To check whether these new examples are slice, one could compute various knot concordance invariants. The previous examples of Akbulut and Abe--Jong--Omae--Takeuchi were shown to be non-slice using signature and Alexander polynomials. 

In summary, to construct new families of knots that are shake slice in a possibly exotic $B^4$, it suffices to produce patterns satisfying the requirements of Proposition~\ref{prop:newshakeslice}.

\begin{proof}[Proof of Proposition~\ref{prop:iteratesorcompose}]Let $V_P$ and $V_Q$ denote the solid tori containing $P$ and $Q$ respectively. Let $E(P):=V_P-N(P)$ and $E(Q):=V_Q-N(Q)$, the exteriors of the patterns. The manifold $E(P)$ has two toral boundary components, on which there are four curves of interest: $m_i(P)$, the meridian of $P$, $\ell_i(P)$, the longitude of $P$, $m_o(P)$, the meridian of $V_P$, and $\ell_o(P)$, the longitude of $V_P$, oriented such that $m_o(P)=w(P)m_i(P)$, $\ell_i(P)=w(P)\ell_o(P)$, and $\lk(m_i(P),\ell_i(P))=\lk(m_o(P),\ell_o(P))=1$. Similarly, we have curves $m_i(Q)$, $\ell_i(Q)$, $m_o(Q)$, and $\ell_o(Q)$. 

A knot $K$ is determined by its exterior $E(K):=S^3-N(K)$ along with an oriented longitude of the boundary torus of $E(K)$. By definition of the satellite construction, the exterior of $Q_r(K)$, $E(Q_r(K))$, is built from $E(Q)$ and $E(K)$ by attaching them along 
$$m_o(Q)  \sim \mu(K)\text{ and }\ell_o(Q) -r\cdot m_o(Q) \sim  \lambda(K), $$
where $\mu(K)$ and $\lambda(K)$ denote the meridian and untwisted longitude of $K$; the untwisted longitude for $Q_r(K)$ can be seen to be 
$$\ell_i(Q)-r\cdot w(Q)^2m_i(Q)$$
since it must be null homologous in $E(Q_r(K))$. 
%That is, 
%$$\lambda(Q_r(K))=\ell_i(Q)-r\cdot w(Q)^2m_i(Q)\text{ and }\mu(Q_r(K))=m_i(Q).$$ 
Repeat this process to construct $E(P_r(Q_r(K)))$ from $E(Q_r(K))$ and $E(P)$, by gluing 
$$m_o(P)  \sim \mu(Q_r(K))=m_i(Q)$$
$$ \text{ and }\ell_o(P) -r\cdot m_o(P) \sim  \lambda(Q_r(K))=\ell_i(Q)-r\cdot w(Q)^2m_i(Q).$$
This yields the 3--manifold $E(P_r(Q_r(K)))$ with the untwisted longitude 
$$\lambda(P_r(Q_r(K)))=\ell_i(P)-r\cdot w(P)^2m_i(P).$$

On the other hand, to construct $E(P\star Q)$, we glue together $E(P)$ and $E(Q)$ by identifying 
$$m_o(P)\sim m_i(Q)\text{ and }\ell_o(P)\sim\ell_i(Q).$$
The resulting 3--manifold is a new solid torus, denoted $V_{P\star Q}$, where we can see that 
\begin{align*}
m_o(P\star Q)=m_o(Q),\,\, &\ell_o(P\star Q)=\ell_o(Q)\\
m_i(P\star Q)=m_i(P),\,\, &\ell_i(P\star Q)=\ell_i(P)
\end{align*}
Note also that $w(P\star Q)=w(P)\cdot w(Q)$. By gluing $E(P\star Q)$ to $E(K)$, via the identifications 
$$m_o(P\star Q)\sim\mu(K)\text{ and }\ell_o(P\star Q)\sim\lambda(K),$$
we obtain the manifold $E((P\star Q)_r(K))$ with untwisted longitude 
\begin{align*}
\lambda(E((P\star Q)_r(K)))&=\ell_i(P\star Q)-r\cdot w(P\star Q)^2m_i(P\star Q)\\
&=\ell_i(P)-r\cdot w(P)^2w(Q)^2m_i(P).
\end{align*}

We now show that the manifolds $E((P\star Q)_r(K))$ and $E(P_r(Q_r(K)))$ are the same if and only if $w(Q)=\pm 1$. This will complete the proof since we already see that the the untwisted longitudes $\lambda((P\star Q)_r(K))$ and $\lambda(P_r(Q_r(K)))$ are the same if and only if $w(Q)=\pm 1$ or $r=0$. Certainly the manifolds $E((P\star Q)_r(K))$ and $E(P_r(Q_r(K)))$ are built using the same pieces. Therefore, we only need to verify that the gluing maps are the same if and only if $w(Q)=\pm 1$. It is clear that $E(Q)$ is attached to $E(K)$ in the same way in both manifolds, since $m_o(P\star Q)=m_o(Q)$ and $\ell_o(P\star Q)=\ell_o(Q)$. We see that $E(P)$ and $E(Q)$ are also glued the same way in both cases since the maps identify 
$$m_o(P)\sim m_i(Q)$$
$$\text{ and } \ell_o(P) -r\cdot m_o(P) \sim \ell_i(Q)-r\cdot w(Q)^2m_i(Q)$$
 in one case, and 
 $$m_o(P)\sim m_i(Q)$$
 $$\text{ and }\ell_o(P)\sim\ell_i(Q)$$
  in the other. Identifying $\ell_o(P)\sim \ell_i(Q)$ is the same as identifying $\ell_o(P) -r\cdot m_o(P) \sim \ell_i(Q)-r\cdot w(Q)^2m_i(Q)$ if and only if $w(Q)^2=1$ or $r=0$ since $m_i(Q)\sim m_o(P)$ in both manifolds. \end{proof}
  
In the fact, the argument in the above proof can be easily modified to show the following more general fact.

\begin{prop}Let $P$ and $Q$ be patterns with winding number $w(P)$ and $w(Q)$ respectively, and let $P\star Q$ denote their composition as patterns, and $r,s$ be integers. The iterated twisted satellite $P_s(Q_r(K))$ is equal to the twisted satellite $(P_{s-r}\star Q)_r(K)$ if and only if $w(Q)=\pm 1$ or $r=0$.\end{prop}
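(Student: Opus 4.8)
The final statement to prove is a generalization of Proposition~\ref{prop:iteratesorcompose}, now allowing different framing parameters $r$ and $s$ on the two satellite operations. The plan is to mimic the proof of Proposition~\ref{prop:iteratesorcompose} almost verbatim, tracking the homological bookkeeping carefully through the extra twist parameter, and then isolate precisely where the condition $w(Q)=\pm 1$ or $r=0$ is forced.

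First I would set up the same notation: solid tori $V_P, V_Q$, exteriors $E(P), E(Q)$, and the four boundary curves $m_i(\cdot), \ell_i(\cdot), m_o(\cdot), \ell_o(\cdot)$ on each, with the orientation conventions $m_o = w\, m_i$, $\ell_i = w\, \ell_o$, and the linking number normalizations. Then I would build $E(Q_r(K))$ by gluing $E(Q)$ to $E(K)$ along $m_o(Q)\sim\mu(K)$ and $\ell_o(Q)-r\cdot m_o(Q)\sim\lambda(K)$, exactly as in the previous proof; the untwisted longitude of $Q_r(K)$ is $\ell_i(Q)-r\cdot w(Q)^2 m_i(Q)$. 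The key change: when I next glue $E(P)$ to $E(Q_r(K))$ to form $E(P_s(Q_r(K)))$, I use the framing $s$, identifying $m_o(P)\sim m_i(Q)$ and $\ell_o(P)-s\cdot m_o(P)\sim \ell_i(Q)-r\cdot w(Q)^2 m_i(Q)$, which yields untwisted longitude $\ell_i(P)-s\cdot w(P)^2 m_i(P)$.

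Next I would compute the other side, $E((P_{s-r}\star Q)_r(K))$. Here I must be slightly careful about what the expression $P_{s-r}\star Q$ means — I interpret it as: first form the composite solid torus by gluing $E(P)$ to $E(Q)$ untwisted (as in the definition of $\star$), but record that the ``$P$ part'' carries an extra relative twist of $s-r$; concretely the composed-and-twisted pattern is glued to $E(K)$ with framing $r$, but the internal gluing of $E(P)$ to $E(Q)$ is done with a shift. The cleanest route is to compare gluing maps directly: in $E(P_s(Q_r(K)))$ the $E(P)$--$E(Q)$ gluing identifies $\ell_o(P)-s\,m_o(P)$ with $\ell_i(Q)-r\,w(Q)^2 m_i(Q)$, and since $m_o(P)=m_i(Q)$ in both manifolds this is the same identification as $\ell_o(P)-(s-r)\,m_o(P)\sim \ell_i(Q)$ precisely when the discrepancy $(s-r)m_o(P) + r\,w(Q)^2 m_i(Q) - r\,m_o(P)\cdot(\text{something})$ vanishes — i.e. when $r\,w(Q)^2 m_i(Q) = r\, m_o(P)$. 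Using $m_o(P)=m_i(Q)$ and $m_o(Q)=w(Q)m_i(Q)$, one sees this reduces to $r(w(Q)^2-1)=0$, so $w(Q)=\pm1$ or $r=0$. The longitude comparison (between $\ell_i(P)-s\,w(P)^2 m_i(P)$ and the longitude of $(P_{s-r}\star Q)_r(K)$) should fall out of the same computation under the same hypothesis.

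The main obstacle — really the only subtle point — is pinning down the correct definition of the twisted composite $P_{s-r}\star Q$ and its framing conventions so that the bookkeeping is internally consistent; once that is fixed, the argument is a line-by-line adaptation of the proof of Proposition~\ref{prop:iteratesorcompose}, with the single new feature that the relevant relation becomes $r(w(Q)^2-1)=0$ rather than $r(w(Q)^2-1)=0$ appearing only via the longitude. I would therefore phrase the proof as: ``The argument is identical to that of Proposition~\ref{prop:iteratesorcompose}, with $s$ in place of $s=r$; one checks that the two gluing maps and the two untwisted longitudes agree if and only if $r\big(w(Q)^2-1\big)=0$,'' and include only the one displayed computation that exhibits this equivalence.
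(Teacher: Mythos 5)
Your proposal is correct and takes essentially the route the paper intends: it is the line-by-line adaptation of the proof of Proposition~\ref{prop:iteratesorcompose} with the framing $s$ on the outer gluing, and your central computation --- that the two $E(P)$--$E(Q)$ identifications agree exactly when $r\,w(Q)^2 m_i(Q) = r\,m_o(P)$, i.e.\ $r\bigl(w(Q)^2-1\bigr)=0$ --- is the heart of the matter. The one step you defer, checking that the untwisted longitude of $(P_{s-r}\star Q)_r(K)$ is $\ell_i(P)-(s-r)w(P)^2m_i(P)-r\,w(P)^2w(Q)^2m_i(P)$, which coincides with $\ell_i(P)-s\,w(P)^2m_i(P)$ under the same condition, is routine and matches the level of detail the paper itself supplies for this statement.
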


%What remains to be done:
%\begin{enumerate}
%\item We haven't mentioned anything so far about `strong' shake concordance and `strong' shake sliceness. \textcolor{red}{Why should we? Hmm maybe we should}
%\item Should we mention somewhere that `shake slice' is not preserved under `shake concordance'?
%\item Is there any reason that our $r$--shake examples are better than the previously known ones for $r\neq 0$? 
%\end{enumerate}

\bibliographystyle{alpha}
\bibliography{bib}

\end{document}